\newcommand{\forcebold}[1]{\boldsymbol{#1}}
\newcommand{\Seg}[1]{\operatorname{Seg}(#1)}
\newcommand{\Var}[1]{\mathcal{#1}}
\newcommand{\Sec}[2]{\sigma_{#1}(#2)}
\newcommand{\SecZ}[2]{\sigma^0_{#1}(#2)}
\newcommand{\tensor}[1]{\mathfrak{#1}}
\newcommand{\vect}[1]{\mathbf{#1}}
\newcommand{\sten}[3]{\vect{#1}_{#2}^{#3}}
\newcommand{\Tang}[2]{\mathrm{T}_{#1} {#2}}
\newcommand{\cspan}{\operatorname{span}}
\newcommand{\diag}{\operatorname{diag}}
\newcommand{\vecc}[1]{\operatorname{vec}(#1)}
\newcommand{\C}{\mathbb{C}}
\newcommand{\R}{\mathbb{R}}
\newcommand{\F}{\mathbb{F}}
\newcommand{\refthm}[1]{{Theorem \ref{#1}}}
\newcommand{\reflem}[1]{{Lemma \ref{#1}}}
\newcommand{\refeqn}[1]{{(\ref{#1})}}
\newcommand{\refdef}[1]{{Definition \ref{#1}}}
\newcommand{\refalg}[1]{Algorithm \ref{#1}}
\newcommand{\refsec}[1]{{Section \ref{#1}}}
\newcommand{\refcor}[1]{{Corollary \ref{#1}}}
\newcommand{\reffig}[1]{{Figure \ref{#1}}}
\newcommand{\refprop}[1]{{Proposition \ref{#1}}}
\newcommand{\refapp}[1]{{Appendix \ref{#1}}}
\newcommand{\refconj}[1]{{Conjecture \ref{#1}}}
\newtheorem{theorem}{Theorem}
\newtheorem{lemma}[theorem]{Lemma}
\newtheorem{proposition}[theorem]{Proposition}
\newtheorem{corollary}[theorem]{Corollary}
\newtheorem{remark}[theorem]{Remark}
\newtheorem{conjecture}{Conjecture}
\theoremstyle{definition}
\newtheorem{definition}{Definition}
\begin{document}

\title{A condition number for the tensor rank decomposition}
\author{Nick Vannieuwenhoven}
\address{KU Leuven, Department of Computer Science, B-3001 Leuven, Belgium}
\email{nick.vannieuwenhoven@cs.kuleuven.be}
\thanks{The author's research was supported by a Postdoctoral Fellowship of the Research Foundation--Flanders (FWO)}

\subjclass[2000]{65F35, 15A69, 15A72, 65H04, 14Q15, 14N05, 58A05, 58A07}
\keywords{tensor rank decomposition, condition number, stability analysis, Terracini's matrix}

\begin{abstract}
The tensor rank decomposition problem consists of recovering the unique set of parameters representing a robustly identifiable low-rank tensor when the coordinate representation of the tensor is presented as input.
A condition number for this problem measuring the sensitivity of the parameters to an infinitesimal change to the tensor is introduced and analyzed. It is demonstrated that the absolute condition number coincides with the inverse of the least singular value of Terracini's matrix. Several basic properties of this condition number are investigated.
\end{abstract} 

\maketitle

\allowdisplaybreaks

%
%
\section{Introduction}
A tensor $\tensor{A} \in \F^{n_1} \otimes \cdots \otimes \F^{n_d}$, where $\F = \C$ or $\R$, and $n_k \ge 2$, $k=1,\ldots,d$, lives in the tensor product of vector spaces. It can be represented as a $d$-array after choosing bases. If the coordinate representation of $\tensor{A}$ is given with respect to the standard tensor basis, then we can explicitly write
\begin{align*} 
 \tensor{A} = \sum_{i_1=1}^{n_1} \sum_{i_2=1}^{n_2} \cdots \sum_{i_d=1}^{n_d} a_{i_1, i_2, \ldots, i_d} \sten{e}{i_1}{1} \otimes \sten{e}{i_2}{2} \otimes \cdots \otimes \sten{e}{i_d}{d},
\end{align*}
where $\sten{e}{i}{k}$ is the $i$th standard basis vector of $\F^{n_k}$.
Tensors of the form $\vect{a}^1 \otimes \vect{a}^2 \otimes \cdots \otimes \vect{a}^d$ are called rank-$1$ tensors. As in the $2$-factor case, i.e., matrices, the above form is, in general, not the minimal way of writing a tensor as a linear combination of rank-$1$ tensors. This leads to a different type of tensor decomposition: 
\begin{align}\label{eqn_cpd}
\tensor{A} = \sum_{i=1}^r \sten{a}{i}{1} \otimes \sten{a}{i}{2} \otimes \cdots \otimes \sten{a}{i}{d}, \quad\mbox{where } \sten{a}{i}{k} \in \F^{n_k}.
\end{align}
If the number of terms $r$ is minimal, then this decomposition called a \emph{tensor rank decomposition} \cite{Hitchcock1927a,Hitchcock1927}. The number $r$ is then called the \emph{rank} of the tensor. In the literature, decomposition \refeqn{eqn_cpd} is sometimes also called Candecomp \cite{Carroll1970}, Parafac \cite{Harshman1970}, Canonical Polyadic decomposition, or CP decomposition. Here, we will call \refeqn{eqn_cpd}, with $r$ not necessarily minimal, an $r$-term decomposition, and if $r$ is minimal, then we call it a rank-$r$ decomposition. 
One key property of the latter---but not of the former---is that a rank-$r$ tensor with $d \ge 3$ often admits an \emph{essentially unique} decomposition \cite{Kruskal1977,COV2013,DdL2015}: the rank-$1$ tensors appearing in decomposition \refeqn{eqn_cpd} are uniquely determined. 
For instance, the well-known Kruskal criterion \cite{Kruskal1977} states that \refeqn{eqn_cpd} is the essentially unique decomposition if 
\[
 r \le \frac{1}{2}( k_1 + k_2 + k_3 - 2 ),
\]
where $k_i$ is the \emph{Kruskal rank} of $A_i = \{ \sten{a}{j}{i} \}_{j=1}^r$, i.e., the integer $k_i$ such that every subset of $A_i$ of cardinality $k_i$ forms a linearly independent set.

\subsection{Motivation}
The essential uniqueness of a tensor rank decomposition turns out to be very useful in several applications, such as in the parameter identification problem in latent variable models \cite{Allman2009,Anandkumar2014b}. Models whose parameters may be inferred with this decomposition (and symmetrical variants) were recently surveyed in a unified tensor-based framework by Anandkumar, Ge, Hsu, Kakade, and Telgarsky \cite{Anandkumar2014b}; they include exchangeable single topic models, hidden Markov models, and Gaussian mixture models. The blind source separation problem in signal processing is another example where the uniqueness of the (symmetric) tensor rank decomposition of a cumulant tensor is key in identifying the underlying statistically independent signals \cite{Comon1994}. A crucial application of the essential uniqueness of the tensor rank decomposition is found in the chemometrics literature, in particular in fluorescence spectroscopy where its use was introduced by Appellof and Davidson \cite{Appellof1981}. 
Since I will treat this application as a prototypical use of the tensor rank decomposition in parameter identification problems---such as all aforementioned applications---I recall the discussion in \cite[Section 2.3]{TomasiPhD}. For a sufficiently diluted solution, some type of molecules called fluorophores will absorb a certain amount of light at the $i$th wavelength and re-emit a fraction of the absorbed energy as light at the $j$th wavelength. Mathematically, $x_{i,j} = \chi \lambda_i \mu_j$, where $x_{i,j}$ is the intensity of the light emitted at the $j$th wavelength when the fluorophore is excited by light at the $i$th wavelength, $\lambda_i$ models the fraction of light absorbed at the $i$th wavelength, $\mu_j$ models the fraction of light emitted at the $j$th wavelength, and $\chi$ is some chemical constant proportional to the concentration of the fluorophore in the solution. When $r$ fluorophores occur in a solution and the interactions among them are negligible, then their signal is additive: $x_{i,j} \approx \sum_{\ell=1}^r \chi_\ell \lambda_{i,\ell} \mu_{j,\ell}$. If several of such mixtures with varying concentrations of fluorophores are simultaneously analyzed, the model for the intensity of the emitted light is a tensor rank decomposition:
\(
 x_{i,j,k} \approx \sum_{\ell=1}^r \chi_{k,\ell} \lambda_{i,\ell} \mu_{j,\ell};
\)
the model is only approximately valid, as it rest on some assumptions such as a sufficiently diluted solution and no interactions among the fluorophores. As stated in \cite[Section 2.3]{TomasiPhD}, ``the uniqueness property of [tensor rank decompositions] guarantees that, if [the above model] is adequate, [...] the estimated loading vectors for the $\ell$th component are readily interpretable as concentration profiles [$\forcebold{\chi}_\ell$] and emission and excitation spectra ([$\forcebold{\mu}_\ell$] and [$\forcebold{\lambda}_\ell$] respectively) of the $\ell$th fluorophore.'' This allows for an identification of the fluorophores present in a chemical mixture of unknown composition.
While the tensor $\tensor{X}$ may be obtained from physical measurements, such a process is always subject to measurement errors because of the finite precision of the measuring equipment. Consequently, only an approximate low-rank decomposition may be anticipated.

The foregoing application highlights that one rarely works with the true low-rank tensor, however one still desires to interpret the uniquely defined rank-$1$ tensors appearing in the true decomposition. One desires knowing, or at least estimating, how much the individual rank-$1$ terms in the decomposition computed from the approximate data differ from the true underlying rank-$1$ tensors. Such an analysis will be the topic of the paper. In order to state the main question more precisely, one needs to specify more carefully which problem we are trying to solve.
In the literature, the parameters of an $r$-term decomposition are often succinctly represented using \emph{factor matrices}. For the present purpose, dealing with a particular vectorization of the factor matrices is more convenient. 
Define the bijections
\begin{align*}
 \operatorname{vec}: \F^{n_1} \times \cdots \times \F^{n_d} &\to \F^{\Sigma+d} &
 \operatorname{vecr}: (\F^{n_1} \times \cdots \times \F^{n_d})^{\times r} &\to \F^{r(\Sigma+d)} \\
(\sten{a}{i}{1}, \sten{a}{i}{2}, \ldots, \sten{a}{i}{d}) &\mapsto
\begin{bmatrix} 
\sten{a}{i}{1} \\ 
\sten{a}{i}{2} \\
\vdots \\ 
\sten{a}{i}{d} 
\end{bmatrix}
, \quad\text{and} &
 ( b_1, b_2, \ldots, b_r) &\mapsto 
\begin{bmatrix}
\vecc{b_1} \\
\vecc{b_2} \\
\vdots \\
\vecc{b_r}
\end{bmatrix},
\end{align*}
where $\Sigma = \sum_{k=1}^d (n_k - 1)$. Let $\Pi = \prod_{k=1}^d n_k$.
With these definitions in place, we can introduce the ``tensor computation'' function
\begin{align*}
 f: \F^{r(\Sigma+d)} &\to \SecZ{r}{\Var{S}_\F} \subset \F^{\Pi} \\
\vect{p} = \operatorname{vecr}(b_1,b_2,\ldots,b_r) &\mapsto \sum_{i=1}^r \sten{a}{i}{1} \otimes \sten{a}{i}{2} \otimes \cdots \otimes \sten{a}{i}{d},
\end{align*}
which takes the \emph{vectorized factor matrices} $\vect{p}$ as input and sends them to the tensor they represent.
Now we can formally state that the \emph{tensor rank decomposition problem} with as input the rank-$r$ tensor $\tensor{A} \in \F^{n_1} \otimes \cdots \otimes \F^{n_d} \cong \F^\Pi$ consists of finding a vector $\vect{x} \in \F^{r(\Sigma+d)}$ in the preimage of $f$ at $\tensor{A}$, so that $\tensor{A} = f(\vect{x})$. We adopt the convenient notation
\[
 f^{\dagger}(\tensor{A}) = \{ \vect{x}\in\F^{r(\Sigma+d)} \;|\; f(\vect{x}) = \tensor{A} \}
\]
for denoting the fiber of $f$ at $\tensor{A}$. Since a tensor rank decomposition is not unique in the strict sense, one should be wary that $f$ is \emph{never} an injective function. Hence, $f^{\dagger}$ is always a \emph{set-valued map} in the sense of Dontchev and Rockafellar \cite{DR2009}. If $\tensor{A}$ does not admit an exact rank-$r$ decomposition, then $f^\dagger(\tensor{A}) = \emptyset$.

The goal of this paper is understanding the relationship between the essentially unique decompositions $f^{\dagger}(\tensor{A})$ and $f^{\dagger}(\tensor{A}')$ when $\|\tensor{A} - \tensor{A}'\| \le \epsilon$ and both $\tensor{A}$ and $\tensor{A}'$ are $r$-identifiable tensors through a structured perturbation analysis of the inverse problem $\tensor{A} = f(\vect{x})$. 
Specifically, I will derive a \emph{condition number} of the tensor rank decomposition problem that captures the first-order behavior of aforementioned relationship.

A condition number of a function $g: \F^m \to \F^n$ is an elementary, well-entrenched property in numerical analysis \cite{BC2013,Wilkinson1965,MPT,Higham1996} that measures the sensitivity of $g$ to infinitesimal changes in the input. In this paper, I consider the \emph{structured} condition number \cite{BC2013}, which differs only from the usual definition in restricting the domain of $g$ to some connected domain $\Var{D} \subset \F^m$; for instance, $\Var{D}$ could be the set of rank-$r$ tensors. Formally, the structured condition number measures the maximum relative change at the output of the function when infinitesimally perturbing the input $\vect{x} \in \Var{D}$ to another $\vect{x}+\Delta\vect{x} \in \Var{D}$: 
\begin{align}\label{eqn_general_condition_number}
\kappa_{\alpha,\beta}(\vect{x}) = \lim_{\epsilon\to0} \max_{\substack{\|\Delta\vect{x}\|\le\epsilon, \\ \vect{x}+\Delta\vect{x} \in \Var{D} }} \frac{ \| g(\vect{x}+\Delta\vect{x}) - g(\vect{x}) \|_\alpha / \| g(\vect{x}) \|_\alpha }{ \| \Delta\vect{x} \|_\beta / \|\vect{x}\|_\beta },
\end{align}
where $\|\cdot\|_\alpha$ and $\|\cdot\|_\beta$ are norms on $\F^n$ and $\F^m$ respectively, and $\vect{x} \in \Var{D} \subset \F^m$. If $\Var{D} = \F^{m}$ then the usual condition number is obtained. 

The main application of condition numbers consists of obtaining an asymptotically sharp \emph{a posteriori} bound on the relative forward error, by grace of the observation 
\begin{align}\label{eqn_forward_error_estimate}
 \frac{\| g(\vect{x}+\Delta\vect{x}) - g(\vect{x}) \|_\alpha}{\| g(\vect{x}) \|_\alpha} \lesssim \kappa_{\alpha,\beta}(\vect{x}) \cdot \frac{ \| \Delta\vect{x} \|_\beta }{ \|\vect{x}\|_\beta },
\end{align}
where $\vect{x}, \vect{x}+\Delta\vect{x} \in \Var{D}$ and $\|\Delta\vect{x}\|_\beta$ should be sufficiently close to zero.
That is, \emph{the relative forward error is asymptotically bounded by the product of the relative backward error multiplied with the condition number}, which is a well-known rule of thumb in numerical analysis. 
This observation is particularly important in applications, because the forward error usually cannot be computed for a practical problem, as you typically would have to know the true solution $g(\vect{x}+\Delta\vect{x})$; here $\vect{x}$ would represent the observed input to problem $g$ which is typically corrupted in varying degrees by measurement, modeling, computation and representation errors, whereas $\vect{x}+\Delta\vect{x}$ would represent the true input. 
The key advantage of a computable condition number of a problem $g$ is that the forward error can be estimated from \emph{a posteriori} information, namely the backward error and the condition number. 
For example, in the aforementioned chemometrics application, the underlying physical model ensures that the theoretical tensor $\tensor{A}$ admits a unique rank decomposition of low rank as in \refeqn{eqn_cpd}, say, with vectorized factor matrices $\vect{x}+\Delta\vect{x}$: $\tensor{A} = f(\vect{x}+\Delta\vect{x})$. Each of the rank-$1$ terms uniquely corresponds with an individual chemical compound in the fluorescent mixture. In practice, the tensor $\widetilde{\tensor{A}}$ that one obtained by measuring the reflected light in an experiment contains measurement errors. This may be regarded as a perturbation of $\tensor{A}$ of small Frobenius norm, say, $\|\tensor{A}-\widetilde{\tensor{A}}\|_F \le \mu$. A low-rank approximate decomposition of $\widetilde{\tensor{A}}$ is then constructed using a numerical method for computing an $r$-term decomposition. This results in an approximation $\tensor{A} \approx \widetilde{\tensor{A}} \approx \widehat{\tensor{A}} = f(\vect{x})$ with factor matrices $\vect{x}$. If $\|\widehat{\tensor{A}} - \widetilde{\tensor{A}}\|_F = \nu$, then we know that the distance between the approximate decomposition $\widehat{\tensor{A}} = f(\vect{x})$ computed from the sampled data $\widetilde{\tensor{A}}$ and the true tensor corresponding to the physical phenomenon $\tensor{A}$ is at most $\mu + \nu$. If we compute the condition number $\kappa$ of the tensor rank decomposition $\widehat{\tensor{A}} = f(\vect{x})$, then it reveals an upper bound on the relative difference between the computed approximate factor matrices $\vect{x}$ and the true factor matrices $\vect{x}+\Delta\vect{x}$. If this relative forward error is small, then one can safely match the approximate rank-$1$ terms to one or more chemical compounds, as the true excitation spectra will have been well approximated. 
In several data-analytic applications where the factor matrices themselves are analyzed and linked to certain phenomena or interpretations, all too often the analysis is stopped when a small backward error $\mu+\nu$ is obtained without considering the well-known fact in numerical analysis that a small backward error must not entail a small forward error. A small backward error $\|\tensor{A}-\widehat{\tensor{A}}\|_F \le \mu + \nu$ entails a small forward error $\|\Delta\vect{x}\|$ only if the problem is \emph{well-conditioned}, i.e., if the condition number is small.

\subsection{Contributions}
The main contribution of this paper is a definition of a condition number for the rank decomposition problem $f^\dagger$. This condition number is well-defined for every rank decomposition that satisfies a technical condition called robust $r$-identifiability. This condition is generally satisfied, such as for a generic\footnote{A property that may be admitted by the elements of a set $S$ is called generic if all elements of $S \setminus N$ admit the property, where $N \subset S$ has measure zero in $S$ in some topology. Note that the notion of generality depends on the underlying topology. A property may, hence, be generic with respect to one topology, while it is not generic in another.} rank-$r$ decomposition in every tensor space that is generically $r$-identifiable---see \reflem{lem_robust_idf}; most spaces are conjectured to be of this kind, see, e.g., \cite{BCO2013,COV2013}. The prime result, \refthm{thm_condition}, may be stated informally as follows.

\begin{theorem} \label{thm_informal_theorem}
 Let $N = r(\Sigma+1) < \Pi$, $N' = \min\{r(\Sigma+d), \Pi\}$, and let $\tensor{A} = f(\vect{x})$ be a generic rank-$r$ tensor in $\F^{n_1 \times n_2 \times \cdots \times n_d}$. If $J = [\partial f_i / \partial x_j]_{i,j}$ denotes the Jacobian matrix of $f$ evaluated at $\vect{x}$ and $\varsigma_1 \ge \varsigma_2 \ge \cdots \ge \varsigma_N \ge \varsigma_{N+1} = \cdots = \varsigma_{N'} = 0$ are its singular values, then $\kappa_A = \varsigma_N^{-1}$ is an absolute condition number of the tensor rank decomposition problem $\tensor{A} = f(\vect{x})$.
\end{theorem}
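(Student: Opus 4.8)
The plan is to recognize the tensor rank decomposition problem as the inversion of $f$, to replace the genuinely set-valued map $f^\dagger$ by an honest local section near $\vect{x}$, and then to read off the derivative of that section from a singular value decomposition of Terracini's matrix $J$. First I would invoke robust $r$-identifiability, as made precise in \reflem{lem_robust_idf}, to fix an open neighborhood $\Var{U} \subset \F^{r(\Sigma+d)}$ of $\vect{x}$ on which $f$ has constant rank $N = r(\Sigma+1)$, so that $f(\Var{U})$ is a smooth submanifold of $\F^\Pi$ of dimension $N$ — an open subset of $\SecZ{r}{\Var{S}_\F}$ — and such that every tensor in $f(\Var{U})$ is $r$-identifiable, whence over $f(\Var{U})$ the fibers of $f$ are exactly the $r(d-1)$-dimensional orbits of the scaling action $\sten{a}{i}{k} \mapsto \lambda_i^{(k)} \sten{a}{i}{k}$ subject to $\prod_k \lambda_i^{(k)} = 1$. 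By the constant-rank theorem $f$ is, in suitable coordinates around $\vect{x}$, a linear coordinate projection; in particular its fibers foliate $\Var{U}$ by locally parallel $r(d-1)$-planes, and the scaling orbit through $\vect{x}$ coincides locally with the global fiber $f^\dagger(\tensor{A})$.

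Next I would pass to the inverse problem. Since $f$ has constant rank on $\Var{U}$, the kernel of $J = J_{\vect{x}}$ is the tangent space to the fiber through $\vect{x}$; set $\Plane{N} := (\ker J)^{\perp} \subset \F^{r(\Sigma+d)}$, which has dimension $N$. Applying the inverse function theorem to $f|_{\vect{x}+\Plane{N}}$ yields a smooth local section $g : \Var{V} \to \vect{x} + \Plane{N}$, defined on a neighborhood $\Var{V}$ of $\tensor{A}$ in $f(\Var{U})$, with $f\circ g = \mathrm{id}$ and $g(\tensor{A}) = \vect{x}$; differentiating gives $\mathrm{d}g_{\tensor{A}} = (J|_{\Plane{N}})^{-1}$, where $J|_{\Plane{N}} : \Plane{N} \to \mathrm{im}(J) = \Tang{\tensor{A}}{f(\Var{U})}$ is a linear isomorphism. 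I would then argue that the absolute condition number from \refeqn{eqn_general_condition_number} for the decomposition problem — with the output discrepancy measured as the set distance $\mathrm{dist}\bigl(f^\dagger(\tensor{A}'),f^\dagger(\tensor{A})\bigr)$ in the Euclidean norm and the input discrepancy as $\|\tensor{A}'-\tensor{A}\|$ in the Frobenius norm — equals $\|\mathrm{d}g_{\tensor{A}}\|$. Indeed, by the constant-rank normal form the fibers over points of $\Var{V}$ are, to first order, parallel translates of $f^\dagger(\tensor{A})$, so the set distance between $f^\dagger(\tensor{A}')$ and $f^\dagger(\tensor{A})$ is, to first order, the norm of the component of the displacement normal to the fiber at $\vect{x}$, which is precisely $\|g(\tensor{A}') - \vect{x}\| = \|\mathrm{d}g_{\tensor{A}}(\tensor{A}'-\tensor{A})\| + o(\|\tensor{A}'-\tensor{A}\|)$; maximizing over the perturbation direction yields $\|\mathrm{d}g_{\tensor{A}}\|$. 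This quantity is intrinsic: the orthogonal transversal $\vect{x}+\Plane{N}$ is only a computational device and any other complement to $\ker J$ gives the same value of the set distance.

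It remains to compute $\|(J|_{\Plane{N}})^{-1}\|$. Take a singular value decomposition $J = U\Sigma V^{*}$ with right singular vectors $\vect{v}_1,\ldots,\vect{v}_{r(\Sigma+d)}$, left singular vectors $\vect{u}_1,\ldots,\vect{u}_\Pi$, and $\Sigma$ carrying $\varsigma_1 \ge \cdots \ge \varsigma_N > 0$ and $\varsigma_{N+1} = \cdots = \varsigma_{N'} = 0$ on its diagonal; here $N' = \min\{r(\Sigma+d),\Pi\}$ is just the number of singular values of the $\Pi \times r(\Sigma+d)$ matrix $J$, and $\varsigma_N > 0$ holds precisely because $\rank{J} = N$ — this is exactly where robust identifiability (non-defectivity of the $r$th secant variety) enters. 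Then $\ker J = \cspan\{\vect{v}_{N+1},\ldots\}$, so $\Plane{N} = \cspan\{\vect{v}_1,\ldots,\vect{v}_N\}$ and $\mathrm{im}(J) = \cspan\{\vect{u}_1,\ldots,\vect{u}_N\}$; in these orthonormal bases $J|_{\Plane{N}}$ is $\diag(\varsigma_1,\ldots,\varsigma_N)$ and $(J|_{\Plane{N}})^{-1}$ is $\diag(\varsigma_1^{-1},\ldots,\varsigma_N^{-1})$, whose operator norm — with the Euclidean norm on $\Plane{N}$ and the Frobenius (= Euclidean) norm on $\mathrm{im}(J)\subset\F^\Pi$ — equals $\max_i \varsigma_i^{-1} = \varsigma_N^{-1}$. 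Hence $\kappa_A = \varsigma_N^{-1}$.

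The main obstacle, and the only conceptually non-routine step, is the middle one: fixing the metric on the output of this genuinely set-valued, symmetry-laden problem and verifying that the resulting condition number is both intrinsic and equal to $\|\mathrm{d}g_{\tensor{A}}\|$. Concretely one must show that the set distance between two nearby fibers is, to first order, the displacement normal to the fiber at $\vect{x}$, which relies on the constant-rank normal form of $f$ together with the identifiability guarantee that the global fiber $f^\dagger(\tensor{A})$ is locally the scaling orbit through $\vect{x}$. The remaining ingredients — the constant-rank and inverse function theorems and the SVD bookkeeping — are standard.
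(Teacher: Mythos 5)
There is a genuine gap, and it sits exactly at the step you yourself flag as the only non-routine one: the choice of output metric and the identification of the first-order ``set distance'' with the normal displacement at $\vect{x}$. You measure the forward error by the symmetric set distance $\mathrm{dist}\bigl(f^\dagger(\tensor{A}'),f^\dagger(\tensor{A})\bigr)$, i.e.\ an infimum over \emph{both} orbits of the scaling/permutation group $\Var{T}$. This is precisely the premetric $\widehat{d}$ that the paper rejects at the end of \refsec{sec_distance}: because $\Var{T}$ is non-compact and its orbits are not closed, orbits of representatives of \emph{distinct} tensors can meet at infinity, so $\widehat{d}(\vect{p},\vect{q})=0$ can occur with $f(\vect{p})\neq f(\vect{q})$, and Hausdorff separation fails. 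Since the symmetric set distance is bounded above by the anchored distance $d\bigl(\vect{p},f^\dagger(\tensor{A}+\Delta\tensor{A})\bigr)=\inf_{T\in\Var{T}}\|\vect{p}-T(\vect{p}+\Delta\vect{p})\|$ actually used in \refthm{thm_condition}, the quantity you define is a priori only $\le\varsigma_N^{-1}$, and the degeneracy shows it need not equal it. Your justification---that by the constant-rank normal form the fibers are ``to first order parallel translates,'' so the set distance equals the displacement normal to the fiber at $\vect{x}$---does not hold: the infimum defining the set distance ranges over the \emph{entire} non-compact orbits, which leave any chart in which the normal form is valid, and it is exactly outside such a chart that nearby orbits can approach each other.

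Second, even after replacing your metric by the paper's anchored distance, your argument only supplies the upper bound $d(\vect{p},f^\dagger(\tensor{A}'))\le\|g(\tensor{A}')-\vect{x}\|$ furnished by the section $g$ through $\vect{x}+(\ker J)^\perp$. The matching lower bound---that exploiting the scaling indeterminacy (moving along kernel directions of $J$) cannot bring the perturbed orbit appreciably closer to $\vect{p}$ than the minimal-norm preimage of $\Delta\tensor{A}$ in $(\ker J)^\perp$---is the real content of the theorem and is not delivered by the constant-rank or inverse function theorems: a finite displacement along $\ker J$ is a rescaling only to first order, and the group element realizing the minimum in the anchored distance is not a priori close to the identity. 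The paper supplies exactly these missing pieces: the Iterated Scaling Lemma (\reflem{lem_isl}) converts a kernel-direction perturbation into an exact change of representative plus a remainder of size $\mathcal{O}(\|\forcebold{\nabla}\|^2)$ orthogonal to the kernel, \refcor{cor_isl_terracini} controls how Terracini's matrix changes under that rescaling, and Part I of the proof of \refthm{thm_condition} establishes the continuity statement that forces the minimizing scaling to tend to the identity as $\Delta\tensor{A}\to0$; Part III then exhibits the worst-case perturbation along the $N$th right singular vector. Your SVD bookkeeping at the end is fine, but without these ingredients the central equality $\kappa_A=\varsigma_N^{-1}$ is asserted rather than proved.
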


The proof of this theorem is rather technical, and it should become clear to the reader that several of the technical issues could have been overcome if a slightly different definition of a condition number was investigated---in particular, the reader may notice that Part I of the proof of \refthm{thm_condition} contains in it the nuggets of a definition of a (set of) alternative condition numbers. The technical issues are mathematically easy to overcome by reducing the number of parameters in the vectorized factor matrices to the (expected) dimension (as algebraic variety) of the set of rank-$r$ tensors. This amounts to removing the so-called \emph{scaling indeterminacies} that are inherent in the definition of $f$, resulting in a Jacobian matrix with linearly independent columns. The scaling indeterminacies can be removed in several ways, perhaps most straightforwardly by simply removing some of the variables (as in Part I of the proof of \refthm{thm_condition}). However, I stress that this approach leads to a condition number of a \emph{different} problem $\widehat{f}^\dagger$, simply because the function $\widehat{f}$ mapping the (minimal) set of parameters to a tensor is different from $f$. This approach introduces some challenges of its own, mainly related to the fact that each choice of elimination of variables leads to a different tensor rank decomposition problem, and hence to a different condition number. While it leads to very interesting questions such as determining which elimination of variables one should use to minimize the condition number in a given problem instance, this paper provides an answer to Nicaise's question:\footnote{Remark that Nicaise's question explicitly excludes taking the minimum or maximum over all possible modified tensor rank decomposition problems $\widehat{f}^\dagger$.}
\begin{center}\emph{
 Does $f^\dagger$ admit an intrinsic condition number, i.e., one that does not depend (even implicitly) on the choice of affine chart (or, equivalently, on the choice of elimination of variables)?
}\end{center}
This is the question that I believe \refthm{thm_condition} answers. I will not investigate the alternative set-of-condition-numbers approach in this paper.

Another way of looking at \refthm{thm_informal_theorem} is that it provides an answer to the following research question:
\begin{center}\emph{
Does the singular value $\varsigma_N$ of the Jacobian of $f$ admit any kind of geometrical interpretation?
}\end{center}
This Jacobian matrix lies at the heart of gradient-based optimization algorithms for the computation of (approximate) tensor rank decompositions, such as the algorithms in \cite{Paatero1997,Phan2013,Hayashi1982,Tomasi2005,Oseledets2006,Acar2011,Phan2013a,Sorber2013a,VMV2015}. That the inverse of this singular value admits an interpretation as a condition number is somewhat of a free lunch: one needs to construct the Jacobian matrix (or its Gram matrix) in every step of the optimization algorithm anyway, and at the final iteration one obtains an estimate of the conditioning of the computed decomposition with little extra effort. In addition, it was already observed in the chemometrics literature that this singular value seems to be related to the stability of the tensor rank decomposition. According to Tomasi \cite{TomasiPhD}, ``several diagnostic parameters have been used in the chemometrics literature to measure the difficulty of [tensor rank decomposition] problems, but many seem to fall short of giving consistent and concise information.'' Tomasi proceeds by listing the various criteria that have been proposed, among them ``the Jacobian's condition number and singular values'' that were studied in \cite{Paatero1999,Paatero2000,Tomasi2005,Tomasi2006}. This paper proves that the inverse of the least singular value of the Jacobian matrix has a clear, consistent and concise interpretation: \emph{it is a condition number} of the tensor rank decomposition problem.

\subsection{Related work}
Despite the importance of a perturbation theory for the tensor rank decomposition problem, heretofore few attempts have been undertaken to develop such a theory. 

As mentioned before, \cite[p.~31]{TomasiPhD} contains an overview of several criteria that have been proposed in the chemometrics literature for assessing the difficulty of the tensor rank decomposition problem. However, none of the cited references derives a condition number as such. It will be shown in \refthm{thm_condition} that the least singular value of the Jacobian matrix that was already proposed as a heuristic for checking stability is, in fact, a condition number, and, as such, admits the useful interpretation in \refeqn{eqn_forward_error_estimate}.

Robust identifiability of the parameters of a tensor rank decomposition was investigated by Bhaskara, Charikar, and Vijayaraghavan \cite{Bhaskara2014}. It is a necessary technical requirement for the definition of the condition number. In addition, it is shown in \cite{Bhaskara2014} how these robustness results relate to the accuracy with which the parameters of a tensor rank decompositions, i.e., the factor matrices, can be identified.

Perhaps the result closest in spirit to this work are the Cram\'er--Rao bounds (CRB) for the tensor rank decomposition problem that have been investigated in \cite{LS2001,KTP2011,TK2011,TPK2013}. This CRB measures the stability of a tensor rank decomposition in a statistical framework, wherein additive Gaussian noise is assumed to corrupt the factor matrices. The quantity of interest is the squared angular error between the true parameters and the estimated parameters \cite{TPK2013}. The Cram\'er--Rao lower bound (CRLB) for estimating the parameters $\vect{p}$ of the tensor rank decomposition of $\tensor{A} = f(\vect{p})$ is then defined \cite{TPK2013} as the inverse of the Fisher information matrix $\tfrac{1}{\sigma^2} J^T J$, where $J$ is as in \refthm{thm_informal_theorem} and $\sigma^2$ is the variance of the Gaussian noise. However, this matrix is not invertible, so in \cite{TPK2013} it is suggested to alter the tensor rank decomposition problem by removing some of the parameters. This implies that the CRLB depends on the particular elimination of the variables that is chosen; hence, it is not an \emph{intrinsic} measure of stability of the tensor rank decomposition problem as defined in this paper. 

As an intermezzo, \cite{TPK2013} states as an open problem whether ``stability of the [tensor rank decomposition] problem implies its essential uniqueness.'' The answer is an unqualified no. There are explicit counterexamples where the generic tensor is not $r$-identifiable, yet whereby the $N$th singular value of the Jacobian $J$ is nonzero, so that the tensor still has finitely many decompositions---for a conjecturally complete list of such tensors, see \cite[Theorem 1.1]{COV2013}. In these cases, the CRLB will be finite, but the tensor is not identifiable. Counterexamples are not limited to those appearing in \cite[Theorem 1.1]{COV2013}, however they expected to be intrinsically tied up with singular points of generically $r$-identifiable Segre varieties, as Example 4.2 in \cite{COV2013} shows.

\subsection{Outline}
The rest of this paper is structured as follows. In the next section, some necessary preliminaries from (semi-)algebraic geometry are recalled. The main object of study, Terracini's matrix, is introduced. \refsec{sec_fiber} analyzes the fiber $f^\dagger$. It is shown that the fiber is usually unique up to the action of some multiplicative group $\Var{T}$. The concept of robust $r$-identifiability is introduced as a weak technical assumption. Essentially it allows us to assume that a neighborhood of a point on the $r$-secant variety of the Segre variety exists wherein all points are smooth, of maximal dimension, and $r$-identifiable. \refsec{sec_distance} introduces a distance measure that ignores the action of the multiplicative group $\Var{T}$ acting on $f^\dagger$. Terracini's matrix has a nontrivial kernel originating from $\Var{T}$, however \refsec{sec_isl} proves that it may be ignored completely in an analysis of the condition number with respect to the proposed distance measure. This lemma is proved by constructing a kind of approximate Newton-type algorithm that replaces a vector in the kernel of Terracini's matrix with a vector in the orthogonal complement of the kernel of much smaller norm. Thereafter, the fact that the inverse of the least nonzero singular value of Terracini's matrix corresponds with an absolute condition number with respect to the distance measure of \refsec{sec_distance} is proved in \refsec{sec_condition}. \refsec{sec_norm_balanced} continues by defining a particular condition number and presents an algorithm for computing it; the basic properties of this condition number are further analyzed in \refsec{sec_basic_properties}. Numerical experiments illustrating the behavior of the proposed condition number are presented in \refsec{sec_numerical_experiments}. A discussion of the conclusions and future research directions concludes the paper in \refsec{sec_conclusions}.

\subsection*{Acknowledgements}
This paper benefited immeasurably from the fruitful discussions I had with B.~Jeuris, K.~Meerbergen, and G.~Ottaviani. I am particularly indebted to J.~Nicaise for inquiring about an intrinsic definition of the condition number. I thank P.~Breiding for detailed comments on an earlier version of this manuscript.

\section{Preliminaries} \label{sec_basic_results}
Several requisite results concerning the tensor rank decomposition are recalled with a particular focus on some basic concepts from algebraic geometry. While the varieties that we study admit the structure of projective varieties, we will present the results in affine space. The reason is that in the applications where we envisage that a condition number would be of interest, one commonly works in affine space. In addition, the definition of a condition number in this paper is inherently local, so that we may always restrict ourselves to affine varieties. Before proceeding, some notation is fixed.

\subsection*{Notation and conventions}
Varieties are typeset in a calligraphic uppercase font ($\Var{S}, \Var{V}$), tensors in an uppercase fraktur font ($\tensor{A})$, matrices in uppercase letters ($A, U, V$), vectors in boldface lowercase letters ($\mathbf{a}$, $\mathbf{x}$), and scalars, as well as points on varieties, in lowercase letters ($a, b, \lambda, p, q$). The symbol $\F$ denotes either the real field $\mathbb{R}$ or the complex field $\mathbb{C}$. For a matrix $A \in \F^{m \times n}$, $A^T$ is its transpose and $A^H$ is its conjugate transpose.
The Euclidean norm of $\vect{v} \in \F^{m}$ is $\|\vect{v}\| = \sqrt{\vect{v}^H \vect{v}}$.
A block diagonal matrix with diagonal blocks $A_1, \ldots, A_d$ is denoted by $\diag(A_1, \ldots, A_d)$. By $\cspan(A)$ the column span of the matrix $A$ is meant.

The prototypical tensor in this paper is the rank-$r$ tensor $\tensor{A} \in \F^{n_1 \times n_2 \times \cdots \times n_d}$. Hence, the scalar $d$ is used exclusively for the order of $\tensor{A}$, $r$ denotes its rank, the scalars $n_1$, $n_2$, $\ldots$, $n_d$ are reserved for $\tensor{A}$'s dimensions, and we also define 
\[
 \Sigma = \sum_{k=1}^d (n_k - 1), \quad
 N = r(\Sigma + 1), 
 \quad\text{and}\quad
 \Pi = \prod_{k=1}^d n_k,
\]
which are respectively the dimension of the variety of rank-$1$ tensors embedded in $F = \F^{n_1} \otimes \F^{n_2} \otimes \cdots \otimes \F^{n_d}$, the expected dimension of the smallest variety containing the tensors of rank $r$ in $F$, and the dimension of $F$.

\subsection{Segre varieties and secants}
The Segre variety over $\F = \C$ or $\R$, denoted by 
\[
\Var{S}_\F = \operatorname{Seg}(\F^{n_1} \times \F^{n_2} \times \cdots \times \F^{n_d}) \subset \F^{n_1} \otimes \F^{n_2} \otimes \cdots \otimes \F^{n_d} \cong \F^\Pi, 
\]
is the smooth, determinantal variety of rank-$1$ tensors. Its elements can be parameterized explicitly via the Segre map:
\begin{align*}
 \operatorname{Seg} : \F^{n_1} \times \F^{n_2} \times \cdots \times \F^{n_d} &\to \F^{n_1} \otimes \F^{n_2} \otimes \cdots \otimes \F^{n_d} \cong \F^\Pi \\
 (\vect{{a}}^1, \vect{{a}}^2, \ldots, \vect{{a}}^d) &\mapsto \vect{{a}}^1 \otimes \vect{{a}}^2 \otimes \cdots \otimes \vect{{a}}^d
\end{align*}
where $\otimes$ is the tensor product. For notational brevity, I will consider the Segre map that embeds into $\F^\Pi$; hence, in coordinates the tensor product may be realized as the Kronecker product.
Note that the $2$-factor Segre variety is the set of rank-$1$ matrices.
In this paper, it is often required to refer to the specific vectors $b_i = (\sten{a}{i}{1}, \sten{a}{i}{2}, \ldots, \sten{a}{i}{d})$ in the representation of a point $p_i = \operatorname{Seg}(b_i) \in \Var{S}_\F$ on a Segre variety $\Var{S}_\F$. We will call $b_i$ the \emph{representative} of $p_i$.

Let $r < \tfrac{\Pi}{\Sigma+1}$ be strictly \emph{subgeneric}, and define the set of tensors of $\F$-rank at most $r$ as
\[
 \SecZ{r}{\Var{S}_\F} = \{ p_1 + p_2 + \cdots + p_r \;|\; p_i \in \Var{S}_\F \} = \{ \tensor{A} \in \F^{n_1} \otimes \cdots \otimes \F^{n_d} \;|\; \operatorname{rank}_{\F}(\tensor{A}) \le r \}.
\]
Over the complex numbers, $\F = \C$, the closures of $\SecZ{r}{\Var{S}_\C}$ in both the Euclidean and Zariski topology agree \cite[Corollary 5.1.1.5]{Landsberg2012}, so that rank-$r$ tensors form a Euclidean-dense subset of the algebraic variety $\Sec{r}{\Var{S}_\C} = \overline{\SecZ{r}{\Var{S}_\C}}$, where the overline indicates the Zariski-closure. This projective variety is called the \emph{$r$-secant variety} of the Segre variety $\Var{S}_\C$. Over the real numbers, $\F = \R$, we could define the $r$-secant variety analogously by taking the closure in the Zariski topology, resulting in an algebraic variety over $\R$. Unfortunately, $\SecZ{r}{\Var{S}_\R}$ is not Euclidean-dense in $\overline{\SecZ{r}{\Var{S}_\R}}$ in general. This can seen, for example, in the occurrence of multiple \emph{typical real ranks} \cite{Blekherman2013, CO2012, BBO2015, Landsberg2012}, i.e., the fact that there may be several open sets in $\R^{n_1\times\cdots\times n_d}$ each with constant and different $\R$-rank. For this reason, it is more sensible to consider the closure of $\SecZ{r}{\Var{S}_\R}$ in the Euclidean topology, which results in a semi-algebraic set that we denote by $\Sec{r}{\Var{S}_\R}$, see, e.g., \cite[Theorem 6.2]{Silva2008}. Recall that a semi-algebraic set is the solution set of a system of polynomial equations and inequalities over $\R$---see \cite{BCR1998}.

The foregoing showed that for both $\F = \C$ and $\R$ the tensors of $\F$-rank equal to $r$ form a Euclidean-dense subset of $\Sec{r}{\Var{S}_\F}$. Taking the closure of $\SecZ{r}{\Var{S}_\F}$ in the Euclidean topology generally introduces new points, i.e., $\Var{R} = \Sec{r}{\Var{S}_\F} \setminus \SecZ{r}{\Var{S}_\F} \ne \emptyset$.  
This residual set $\Var{R}$ contains tensors of rank strictly larger than $r$, which can be approximated arbitrarily well by tensors of rank $r$. Such tensors are called border tensors \cite{Bini1980,Bini1980a}. 
They cause problems when trying to approximate them by a rank-$r$ tensor, which will be illustrated in \refsec{sec_illposed_example}.

\subsection{Dimension} \label{sec_dimension}
A fundamental property of manifolds, varieties and semi-algebraic sets is that at most points they locally resemble a linear space. 
One way to describe this local linear approximation to a manifold, variety or semi-algebraic set $\Var{V} \in \F^{\Pi}$ at a point $p \in \Var{V}$ is the \emph{Zariski tangent space}; see respectively \cite[p.~54]{Lee2013}, \cite[pp.~175--176]{GH1978} and \cite[Section 3.3]{BCR1998}. This vector space is defined as the span of all vectors in $\F^\Pi$ that are tangent to $\Var{V}$:
\[
 \Tang{p}{\Var{V}} = \cspan\bigl( \{ \tfrac{d}{dt}p (0)  \;|\; p(t) \subset \Var{V},\; t \in (0,1], p(0) = p \} \bigr).
\]
The dimension of a manifold or variety $\Var{V} \subset \F^{\Pi}$ may be defined geometrically as the number $n$ such that $\dim \Tang{p}{\Var{V}} = n$ for all $p$ in an open neighborhood $\Var{N}$ of $p_0 \in \Var{V}$. For a semi-algebraic set $\Var{V}$ the dimension is defined as the dimension of the Zariski-closure $\overline{\Var{V}}$ \cite[Proposition 2.8.2]{BCR1998}. 
The dimension of the Segre variety $\Var{S}_\F = \operatorname{Seg}(\F^{n_1} \times \cdots \times \F^{n_d}) \subset \F^{\Pi}$ is well-known \cite{Harris1992,Landsberg2012}:
\[
 \dim \Var{S}_\F = \Sigma + 1 = 1 + \sum_{k=1}^d (n_k - 1).
\]
The dimension of $\Sec{r}{\Var{S}_\F}$ is, in general, not known. Since $\SecZ{r}{\Var{S}_\F}$ is Euclidean-dense in $\Sec{r}{\Var{S}_\F}$, an upper bound is readily established by noting that $\Sec{r}{\Var{S}_\F}$ is the projection of 
\[
 \overline{ \bigl\{ \bigl((p_1,p_2,\ldots,p_r),p\bigr) \;|\; p=p_1+p_2+\ldots+p_r,\; p_1, \ldots, p_r \in \Var{S}_\F \} } \subset (\Var{S}_\F \times \cdots \times \Var{S}_\F) \times \F^{\Pi}
\]
onto the second factor, where the overline indicates the closure in the Euclidean topology.
Hence,
\(
 \dim \Sec{r}{\Var{S}_\F} \le \min\{ \Pi , r(\Sigma+1) \},
\)
by \cite[Proposition 2.8.6]{BCR1998} for $\F=\R$ and \cite[Theorem 11.12]{Harris1992} for $\F=\C$.
One expects that equality holds; if this is the case, then we say that the $r$-secant is \emph{nondefective}. If all secant varieties of a Segre variety $\Var{S}_\F$ are nondefective, then we say that this Segre variety is nondefective. 
The problem of determining the dimension of $\Sec{r}{\Var{S}_\F}$ has seen much progress during the last two decades, both theoretically \cite{AOP2009,BCO2013,Bocci2011,CGG2002,CGG2005c,CGG2007,CGG2008,CGG2011,ChCi2001,Chiantini2006,Chiantini2012,Gesmundo2012} and numerically \cite{Comon2009,COV2013,VVM2014}, yet the following is still an open problem: 

\begin{conjecture}[Abo, Ottaviani, and Peterson \cite{AOP2009}] \label{conj_aop}
Let $\Var{S}_\F = \operatorname{Seg}(\F^{n_1} \times \cdots \times \F^{n_d}) \subset \F^{\Pi}$, be a Segre variety with $d \ge 3$ and $n_1 \ge n_2 \ge \cdots \ge n_d \ge 2$. Then, \(\Var{S}_\F\) is nondefective, unless 
\begin{enumerate}
\item $(n_1, n_2, n_3) = (4,4,3)$, 
\item $(n_1, n_2, n_3) = (2k+1,2k+1,3)$ with $k \in \mathbb{N}$,
\item $(n_1, n_2, n_3, n_4) = (k+1, k+1, 2, 2)$ with $k \in \mathbb{N}$, or
\item $n_1 > 1 + \prod_{k=2}^d n_k - \sum_{k=2}^d (n_k-1)$, i.e., $\Var{S}_\F$ is unbalanced.
\end{enumerate}
\end{conjecture}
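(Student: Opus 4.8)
This statement is \refconj{conj_aop}, a well-known open problem; what follows is therefore not a proof but a sketch of the line of attack one would pursue, together with the reasons it has resisted a complete resolution. The plan is to work over $\F = \C$ (the real case then follows from the complex one up to the usual subtleties with typical ranks) and to reduce nondefectivity to an explicit maximal-rank statement. By Terracini's lemma, $\Sec{r}{\Var{S}_\C}$ is nondefective precisely when, for generic points $p_1,\ldots,p_r \in \Var{S}_\C$, the sum of tangent spaces $\Tang{p_1}{\Var{S}_\C} + \cdots + \Tang{p_r}{\Var{S}_\C} \subset \F^{\Pi}$ has the expected dimension $\min\{\Pi, r(\Sigma+1)\}$. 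Concretely this is the rank of a block matrix whose block columns are (affine cones over) the tangent spaces at $r$ generic rank-one tensors — essentially Terracini's matrix — so the whole conjecture becomes the assertion that this explicit structured matrix has maximal rank for every $(n_1,\ldots,n_d)$ outside the four listed families. Since matrix rank is lower semicontinuous, it suffices to exhibit a \emph{single} configuration of $r$ rank-one points — not necessarily generic; possibly over a finite field, or with special combinatorial structure — at which the expected rank is attained.

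The main engine is an induction on $d$ and on the dimensions $n_k$, in the spirit of Abo--Ottaviani--Peterson \cite{AOP2009} and of the m\'ethode d'Horace diff\'erentielle of Alexander--Hirschowitz. First I would specialize some of the $r$ points to lie on a proper Segre subvariety, say $\operatorname{Seg}(\F^{n_1} \times \cdots \times \F^{n_{d-1}} \times \F^{n_d-1})$ or a coordinate hyperplane in one factor; the conditions imposed by the tangent spaces then split into a part ``on'' the subvariety and a residual part ``modulo'' it. This residuation step reduces the maximal-rank statement for $\Var{S}_\C \subset \F^{\Pi}$ to two statements of the same shape for Segre varieties with strictly smaller invariants. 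Inflation/deflation lemmas — adding or deleting a point, or raising a single dimension $n_k \to n_k + 1$ — then propagate nondefectivity upward, provided a finite list of base cases is settled; those base cases one checks by a symbolic or exact numerical rank computation of the corresponding Terracini matrix, as is done in \cite{Comon2009,COV2013,VVM2014}.

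The hard part — and the reason the conjecture is still open — is that this induction does not close. Near the critical rank $r \approx \Pi/(\Sigma+1)$ the Horace split is too tight: after specialization the residual count of conditions no longer matches a smaller instance of the same problem, and one is forced to track higher-order (second osculating) behavior, where the combinatorics of which monomials survive the specialization becomes delicate and is genuinely obstructed in the exceptional families (iii) and (iv). One would additionally need the matching lower bound, i.e., a proof that each of the four listed families \emph{is} defective and, crucially, that \emph{no others} are: the unbalanced case (iv) is classical and the sporadic cases (i)--(iii) are known, but an unconditional proof must rule out every other potential exception, which is exactly the content that current techniques cannot yet supply uniformly in $d$. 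A realistic contribution along these lines is therefore to verify the conjecture in new ranges of $d$ and $(n_1,\ldots,n_d)$ by pushing the inductive base cases and the Horace bookkeeping further, rather than to establish it in full generality.
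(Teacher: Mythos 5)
The statement you were asked about is \refconj{conj_aop}, which the paper itself presents as an open conjecture: it offers no proof, only the citation to \cite{AOP2009} and the surrounding remark that the geometric (Terracini-matrix) approach of \cite{COV2013,VVM2014} extends the numerical evidence to $\R$. You correctly decline to claim a proof, and your sketch --- reduction via Terracini's lemma to a maximal-rank statement for the explicit tangent-space matrix, semicontinuity allowing a single well-chosen specialization, and an Alexander--Hirschowitz/Horace-style induction with computationally verified base cases --- is an accurate account of the known partial strategy in the cited literature, as is your explanation of why the induction fails to close near $r \approx \Pi/(\Sigma+1)$ and why the exceptional families must be ruled in and all others ruled out. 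There is nothing to compare against on the paper's side and no gap to flag, beyond noting that your passing remark that the real case ``follows from the complex one'' should be read as a statement about dimensions of Zariski closures (as in the paper's remark following the conjecture), since real rank itself does not transfer.
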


\begin{remark}
It is noted that \cite{AOP2009} only states the conjecture in the case of $\F=\C$ but the geometrical approach in \cite{VVM2014,COV2013} applies to $\R$-varieties as well, as it consisted of constructing a subset of the Zariski tangent space at a randomly chosen point of $\Sec{r}{\Var{S}_\mathbb{Z}}$ and $\Sec{r}{\Var{S}_\R}$ in \cite{COV2013} and \cite{VVM2014} respectively. By \cite[Proposition 2.8.2]{BCR1998} the dimension of the $\R$-variety $\overline{\Sec{r}{\Var{S}_\R}}$, where the overline denotes the closure in the Zariski topology, coincides with $\dim \Sec{r}{\Var{S}_\R}$.
\end{remark}

\subsection{Smooth and regular points}
A difficulty in working with varieties---as contrasted to smooth manifolds---is that the local dimension of a complex variety $\Var{V}$ is not upper semicontinuous: there may exist points $p \in \Var{V}$ where $\dim \Tang{p}{\Var{V}} > \dim \Var{V}$. A point $p \in \Var{V}$ where $\dim \Tang{p}{\Var{V}} = \dim \Var{V}$ is called a \emph{smooth point} of $\Var{V}$; otherwise, it is called a \emph{singular point}. The set of singular points is called its \emph{singular locus}, which is a subvariety of dimension strictly less than $n$. Similarly, the set of smooth points of $\Var{V}$ forms an $n$-dimensional submanifold of $\C^\Pi$; it is Zariski-dense in $\Var{V}$. For consistency with the real case, which is treated next, I will call a smooth point of a $\C$-variety a \emph{regular point}.

For semi-algebraic sets $\Var{V} \subset \R^\Pi$, defining the smooth locus is more subtle, because $\Var{V}$ may consist of several semi-algebraically connected components of different dimensions. Attempting to define the set of smooth points geometrically as the set of points where $\dim \Tang{p}{\Var{V}} = \dim \Var{V}$ may in general include points that are singular points in the proper, algebraic definition \cite[Definition 3.3.9]{BCR1998}. I prefer an alternative geometrical definition that suffices for the purpose of this paper. Every semi-algebraic set $\Var{V}$ admits a \emph{Nash stratification}, meaning that it can be decomposed as a finite union of disjoint semi-algebraically connected Nash manifolds \cite[Section 9.1]{BCR1998}: $\Var{V} = \bigcup_{i} \Var{M}_i$, where $\Var{M}_i$ is a smooth $\R$-manifold of dimension $d_i = \dim \Var{M}_i$ and $\Var{M}_i \cap \Var{M}_j = \emptyset$ if $i\ne j$. Let $D = \{ i \;|\; \dim \Var{M}_i = \dim \Var{V} \}$ be the set of all $\Var{M}_i$ of maximal dimension; it is nonempty. I will call $\bigcup_{i\in D} \Var{M}_i$ the set of \emph{regular} points of the semi-algebraic set $\Var{V}$. All regular points are smooth but in general the converse is false. The regular locus is neither Zariski-dense nor Euclidean-dense, in general.\footnote{With some additional effort the smooth locus could be defined accurately so that it would be a Euclidean-dense subset of $\Var{V}$ but for the purpose of this paper the regular locus suffices since the condition number in Theorems \ref{thm_informal_theorem} and \ref{thm_condition} will be $\infty$ for non-regular points.}

\subsection{Terracini's Jacobian} \label{sec_terracini}
The main object of study in \refthm{thm_informal_theorem} is the set of singular values of the Jacobian matrix of partial derivatives of $f$ evaluated at $\vect{p}$. This Jacobian matrix is well-known \cite{AOP2009,VVM2014} and its structure has been analyzed in the context of optimization algorithms for computing tensor rank decompositions \cite{Acar2011,Phan2013a,Sorber2013a,VMV2015}.
Assume that we are given a set of representatives $b_i = (\sten{a}{i}{1}, \ldots, \sten{a}{i}{d})$ of the points $p_i = \operatorname{Seg}(b_i) \in \Var{S}_\F$, and consider the tensor rank decomposition
\[
 \tensor{A} = f(\vect{p}) = \sum_{i=1}^r p_i = \sum_{i=1}^r \sten{a}{i}{1} \otimes \sten{a}{i}{2} \otimes \cdots \otimes \sten{a}{i}{d},
\]
where $\vect{p} = \operatorname{vecr}(b_1,b_2,\ldots,b_r)$. Then, the Jacobian matrix is given explicitly by
\begin{align}\label{eqn_terracini}
T_\vect{p} = \begin{bmatrix} T_{1} & T_{2} & \cdots & T_{r} \end{bmatrix}, \text{ where } 
T_{i} = \begin{bmatrix} I_{n_1} \otimes \sten{a}{i}{2} \otimes \cdots \otimes \sten{a}{i}{d} & \cdots & \sten{a}{i}{1} \otimes \cdots \otimes \sten{a}{i}{d-1} \otimes I_{n_d}
\end{bmatrix}.
\end{align}
I will refer to $T_\vect{p}$ as \emph{Terracini's matrix} associated with the vectorized factor matrices $\vect{p}$ because it describes that part of the tangent space to $\Sec{r}{\Var{S}_\F}$ that is obtained from an application of Terracini's Lemma \cite{Terracini1911,Landsberg2012}. At regular points of $\Sec{r}{\Var{S}_\F}$, the column span of Terracini's matrix coincides with the Zariski tangent space. It then follows from \refconj{conj_aop} that $T_\vect{p}$ is expected to be of maximal rank $N = r(\Sigma+1)$ for subgeneric $r \le \tfrac{\Pi}{\Sigma+1}$, modulo the known exceptions presented in \refconj{conj_aop}. For $\F = \C$ this implies that Terracini's matrix is of rank $N$ on a dense subset of $\F^{r(\Sigma+d)}$, while for $\F = \R$ we can only conclude that its rank is $N$ on an open subset of $\F^{r(\Sigma+d)}$.

As the number of columns of $T_\vect{p}$ is $r(\Sigma + d) > N$, there is a kernel of dimension at least $r(d-1)$. If $\Sec{r}{\Var{S}_\F}$ has the expected dimension, then the kernel can be described explicitly as follows. A basis of the kernel of $T_{p_i}$ is
\begin{subequations}  \label{eqn_terracini_kernel}
\begin{align}
 K_i = \left\{ 
\begin{bmatrix} \sten{a}{i}{1} \\ -\sten{a}{i}{2} \\ 0 \\ 0 \\ \vdots \\ 0 \end{bmatrix},  
\begin{bmatrix} \sten{a}{i}{1} \\ 0 \\ -\sten{a}{i}{3} \\ 0 \\ \vdots \\ 0 \end{bmatrix}, \ldots,
\begin{bmatrix} \sten{a}{i}{1} \\ 0 \\ 0 \\ \vdots \\ 0 \\ -\sten{a}{i}{d} \end{bmatrix}
\right\} = \{ \sten{k}{i}{2}, \sten{k}{i}{3}, \ldots, \sten{k}{i}{d} \},
\end{align}
which contains exactly $d-1$ linearly independent basis vectors. It is straightforward to check that $T_{i} K_i = 0$.
It is an exercise to verify that 
\begin{align}
 K = \{ \sten{k}{i}{2} \otimes \vect{e}_i, \sten{k}{i}{3} \otimes \vect{e}_i, \ldots, \sten{k}{i}{d} \otimes \vect{e}_i \}_{i=1}^r,
\end{align}
\end{subequations}
where $\vect{e}_i$ is the $i$th standard basis vector of $\F^{r}$, forms a basis of the kernel of $T_\vect{p}$. 

\section{The solution fiber and identifiability} \label{sec_fiber}
Since $f$ is never an injective function, the fiber $f^\dagger(\tensor{A})$ of $f$ at a rank-$r$ tensor $\tensor{A}$ contains several elements. Two trivial sources of this multivaluedness are well-known \cite{Kruskal1977}. The \emph{permutation indeterminacy} arises because the order in which the rank-$1$ terms appear in \refeqn{eqn_cpd} is not uniquely determined by $\tensor{A}$; if $\vect{p} = \operatorname{vecr}(b_1,b_2,\ldots,b_r) \in f^\dagger(\tensor{A})$ is a decomposition of $\tensor{A}$, then so is $\vect{p}_\pi = \operatorname{vecr}(b_{\pi_1},b_{\pi_2},\ldots,b_{\pi_r}) \in f^\dagger(\tensor{A})$ for every permutation $\pi$ of $\{1,2,\ldots,r\}$. If a rank-$1$ tensor is represented in the standard way by a $d$-tuple of vectors $(\sten{a}{}{1}, \ldots, \sten{a}{}{d}) \in \F^{n_1} \times \cdots \times \F^{n_d}$, then this representation suffers from a \emph{scaling indeterminacy} in the sense that $(\alpha_1 \sten{a}{ }{1}, \ldots, \alpha_d \sten{a}{ }{d})$ with $\alpha_1 \cdots \alpha_d = 1$ represents the same rank-$1$ tensor. 
Two rank decompositions are considered to be \emph{essentially equal} if the rank-$1$ tensors appearing in their respective decompositions are equal.\footnote{If one prefers to think projectively: two rank decompositions are essentially equal if the points on the Segre variety in projective space are equal. Note that we can ignore multiplicities of the points because a decomposition in which the same rank-$1$ tensor appears multiple times is not of rank $r$, contradicting the assumption on $\tensor{A}$'s rank.} 

Let us be more concrete about the aforementioned relation. Define\footnote{The letters are mnemonics for Diagonal scaling, Block diagonal scaling, the Permutation indeterminacies, and the Trivial indeterminacies.}
\begin{align*}
\Var{D} 
&= \bigl\{ \diag\bigl( (\alpha_2\cdots\alpha_d)^{-1} I_{n_1}, \alpha_2 I_{n_2}, \ldots, \alpha_d I_{n_d} \bigr) \;\big|\; \alpha_2, \ldots, \alpha_d \in \F\setminus\{0\} \bigr\}, \\
\Var{B}
&= \bigl\{ \diag(D_1,D_2,\ldots,D_r) \;|\; D_1, D_2, \ldots, D_r \in \Var{D} \bigr\}, \\
\Var{P} 
&= \bigl\{ P \otimes I_{\Sigma+d} \;|\; P \text{ is an } r \times r \text{ permutation matrix} \bigr\}, \text{ and} \\
\Var{T}
&= \Var{P}\Var{B} = \bigl\{ P B \;|\; P \in \Var{P} \text{ and } B \in \Var{B} \bigr\}.
\end{align*}
It is not difficult to verify that for every $P\otimes I_{\Sigma+d} \in \Var{P}$ and diagonal matrices $D_i \in \F^{\Sigma+d \times \Sigma+d}$ one has
\begin{align} \label{eqn_diagonal_congruence}
(P\otimes I_{\Sigma+d}) \cdot \diag(D_1, D_2, \ldots, D_r) \cdot (P \otimes I_{\Sigma+d})^T = \diag( D_{\pi_1}, D_{\pi_2}, \ldots, D_{\pi_r} ),
\end{align}
where $\pi$ is the permutation represented by $P$. It is straightforward to check that both $\Var{B}$ and $\Var{P}$ are multiplicative groups. From thence one can verify that $\Var{T}$ satisfies the properties of a group, however I present here a more elegant alternative that was suggested to me by P.~Breiding: \refeqn{eqn_diagonal_congruence} defines a map $\psi : \Var{P} \to \mathrm{Aut}(\Var{B})$ that takes $x \in \Var{P}$ to $\psi_x \in \mathrm{Aut}(\Var{B})$ defined by $\psi_x : y \mapsto x y x^{-1}$. Hence, it follows immediately (see, e.g., \cite[Chapter 7]{Rotman}) that $\Var{T}$ is the semidirect product $\Var{T} = \Var{P} \rtimes \Var{B}$, which is a multiplicative group.
Writing $\vect{p} \sim \vect{q}$ if and only if $\vect{p} = T \vect{q}$ for some $T \in \Var{T}$,\footnote{That is, $\vect{q}$ is in the orbit of $\vect{p}$.} it is now easy to verify that $\sim$ is an equivalence relationship and that $f$ is a morphism for $\sim$. If $\vect{p} \sim \vect{q}$ then $\vect{p}$ and $\vect{q}$ represent tensor rank decompositions that are essentially equal, as we defined above. Naturally, if $\vect{p} \sim \vect{q}$ then $f(\vect{p}) = f(\vect{q})$, however \emph{the converse is not a priori true}. If the converse is also true, i.e., $\tensor{A} = f(\vect{p}) = f(\vect{q})$ implies that $\vect{p} \sim \vect{q}$, then we say that $\tensor{A}$ is \emph{$r$-identifiable}. In other words, if
\[
 | f^\dagger(\tensor{A}) / \sim | = 1,
\]
then $\tensor{A}$ is of rank $r$ and has one essentially unique tensor rank decomposition.

In the case of the tensor rank decomposition, there is very extensive evidence that most tensors of small rank are identifiable, substantially simplifying the interpretation of the condition number that will be explored in this paper. As is well-known, Kruskal \cite{Kruskal1977} proposed an explicit criterion for testing whether $\tensor{A} = f(\vect{x}) \in \SecZ{r}{\Var{S}_\F}$ is $r$-identifiable, assuming that $\vect{x}$ is provided as input to the test. Several other such criteria have been proposed since---e.g., \cite{SB2000,JS2004,Domanov2013b,COV2013}. More powerful theorems and conjectures are known in the setting of \emph{generic $r$-identifiability}. A Segre variety $\Var{S}_\F$ is called generically $r$-identifiable if there exists a Zariski-closed set $Z \supset \Sec{r}{\Var{S}_\F}\setminus\SecZ{r}{\Var{S}_\F}$ such that every $\tensor{A} \in \Sec{r}{\Var{S}_\F} \setminus Z$ is $r$-identifiable. 
Several papers have proved generic $r$-identifiability of the secant varieties $\Sec{r}{\Var{S}_\F}$, such as \cite{S1983,Chiantini2012,BCO2013,DdL2015,HOOS2015}. 
In the case of the algebraically closed field $\F = \C$, the results are much more developed than in the real case;
the following conjecture paints a nearly complete picture:

\begin{conjecture}[Chiantini, Ottaviani, and Vannieuwenhoven \cite{COV2013}]\label{conj_cov}
 Let $\Var{S}_\C = \Seg{\C^{n_1}\times\C^{n_2}\times\cdots\times\C^{n_d}} \subset \C^{\Pi}$ be a Segre variety with $d \ge 3$ and $n_1 \ge n_2 \ge \cdots \ge n_d \ge 2$. Then, $\Var{S}_\C$ is generically $r$-identifiable if $r < \tfrac{\Pi}{\Sigma+1}$, unless $(n_1, n_2, \ldots, n_d)$ is
 \begin{enumerate}
  \item $(4,4,3)$, 
  \item $(4,4,4)$, 
  \item $(6,6,3)$, 
  \item $(n,n,2,2)$ with $n \in \mathbb{N}$, 
  \item $(2,2,2,2,2)$, or 
  \item $n_1 > \prod_{k=2}^d n_k - \sum_{k=2}^d (n_k - 1)$.
 \end{enumerate}
\end{conjecture}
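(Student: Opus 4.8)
Conjecture~\ref{conj_cov} is an open problem, so what follows is the line of attack along which the presently known cases have been established and along which a complete proof would proceed, rather than a finished argument. The plan is to reduce generic $r$-identifiability to a second-order non-degeneracy property of the Segre variety, and then to establish that property by an inductive, partly computer-assisted argument.

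First I would invoke the criterion of Chiantini--Ciliberto, in the sharpened form used in \cite{COV2013, BCO2013}: the Segre variety $\Var{S}_\C = \Seg{\C^{n_1}\times\cdots\times\C^{n_d}}$ is generically $r$-identifiable as soon as it is \emph{not $(r-1)$-weakly defective}, meaning that a general hyperplane $H$ tangent to $\Var{S}_\C$ at $r$ general points $p_1,\dots,p_r$ is tangent along only a finite set and its tangential contact locus is precisely $\{p_1,\dots,p_r\}$. In terms of Terracini's matrix $T_{\vect{p}}$ of \refeqn{eqn_terracini} this asks, at a general $\vect{p}$, for (a) $\rank{T_{\vect{p}}}=N$, which is exactly the nondefectivity statement \refconj{conj_aop}, and (b) a second-order transversality condition: taking a general $h$ in the cokernel $\ker T_{\vect{p}}^H$ --- the space of hyperplanes tangent to $\Var{S}_\C$ at all of $p_1,\dots,p_r$ --- the associated contact locus must be zero-dimensional and reduced, a condition on the relevant Hessian / second fundamental form. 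The exception list of Conjecture~\ref{conj_cov} is then, conjecturally, exactly the exceptions to \refconj{conj_aop} together with the finitely many formats where (b) fails: $(4,4,3)$, $(4,4,4)$, $(6,6,3)$, $(2,2,2,2,2)$, the family $(n,n,2,2)$, and the whole unbalanced range; verifying that this accounting is complete is itself part of the task.

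Second, with this reduction in hand I would prove non-$(r-1)$-weak-defectivity for every remaining format by a two-layer induction in the style of \cite{AOP2009, VVM2014, BCO2013}. The base layer is a finite list of ``small'' Segre varieties treated directly: draw a random representative $\vect{p}$ over a large prime field (or over $\mathbb{Q}$), confirm that $\rank{T_{\vect{p}}}=N$, and compute the dimension of the contact locus --- equivalently the corank of the Hessian block restricted to $\ker T_{\vect{p}}^H$ --- checking it equals $0$; upper-semicontinuity then transports the conclusion to the generic point over $\C$. The inductive layer propagates non-weak-defectivity from smaller to larger formats by specializing the extra coordinate directions, the tool being the Terracini-type partial specialization (Horace-type) arguments already used for the dimension computations of \cite{AOP2009, VVM2014}. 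Separately one handles the unbalanced case, where a flattening $\C^{n_1}\to\C^{\prod_{k\ge 2}n_k}$ turns the decomposition into a matrix-pencil / generalized-eigenvalue problem whose solution set is visibly positive-dimensional once $r$ exceeds the stated bound, and the sporadic exceptions (1)--(5), where identifiability genuinely fails and one exhibits an explicit positive-dimensional family of decompositions.

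The main obstacle is the inductive layer, which is precisely why Conjecture~\ref{conj_cov} remains open: one needs a finite set of computationally verified base cases that \emph{provably} forces non-weak-defectivity for every admissible $(n_1,\dots,n_d)$ with $d$ arbitrary and the $n_k$ arbitrarily large. The specialization inductions do close up for the first-order (tangent space) statement in wide regimes, but the second-order bookkeeping --- carrying the contact locus, and not merely $\rank{T_{\vect{p}}}$, through each specialization and keeping it zero-dimensional and reduced --- does not yet propagate cleanly in full generality, and controlling it uniformly in $d$ is the crux. A secondary difficulty is delimiting the exception list sharply: proving that outside items (1)--(6) no further weak defectivity occurs is, at present, only checkable format-by-format rather than by a single structural argument. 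None of this is required elsewhere: Conjecture~\ref{conj_cov} enters this paper only through \reflem{lem_robust_idf}, to guarantee that robust $r$-identifiability --- hence a finite condition number --- holds generically on those Segre varieties where it is already known, while \refthm{thm_condition} itself is proved unconditionally.
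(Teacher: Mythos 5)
This statement is an open conjecture quoted from \cite{COV2013}; the paper offers no proof of it, only the remark that it has been verified by computer for all formats with $n_1\cdots n_d\le 15000$, so there is no argument of the paper to compare yours against. You are right to treat it as open rather than provable here, and your sketch (reduction to non-weak-defectivity via the Chiantini--Ciliberto criterion, computer-assisted base cases with specialization/induction, flattenings for the unbalanced range) accurately reflects how the known partial results were obtained and where the genuine obstruction lies.
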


A computer-assisted proof \cite{COV2013} shows that the conjecture is true for all $n_1 \cdots n_d \le 15000$. The conjecture is nearly optimal in the sense that a generic $\vect{p} \in \C^{r(\Sigma+d)}$ with $r > \tfrac{\Pi}{\Sigma+1}$ has a solution fiber at $\tensor{A} = f(\vect{p})$ of positive dimension, i.e., $\dim f^\dagger(\tensor{A}) \ge r(\Sigma+1) - \Pi > 0$, so that it is not identifiable. For ``perfect'' spaces where $r = \tfrac{\Pi}{\Sigma+1}$ and for which $\Sec{r}{\Var{S}_\C}$ is nondefective, a generic $\tensor{A} \in \Sec{r}{\Var{S}_\C} = \C^\Pi$ has $|f^\dagger(\tensor{A})/\sim| = k$, where $k$ is expected to be strictly larger than $1$ according to \cite{HOOS2015}.

Closely related with generic $r$-identifiability is the {a priori} weaker notion of \emph{robust $r$-identifiability}, which was recently explored in \cite{Bhaskara2014}. 

\begin{definition}[Robust $r$-identifiability]
A rank-$r$ tensor $\tensor{A} \in \SecZ{r}{\Var{S}_\F}$ is said to be robustly $r$-identifiable if it is a regular point of $\SecZ{r}{\Var{S}_\F}$ and there exists an open neighborhood $\Var{N}$ of $\tensor{A}$ such that 
\[
 \forall \tensor{B} \in \Var{N} \subset \SecZ{r}{\Var{S}_\F} :\: |f^\dagger(\tensor{B})/\sim| = 1.
\]
\end{definition}
Notice that the assumption on the regularity of $\tensor{A}$ ensures that Terracini's matrix coincides with the Zariski tangent space and that (an open semi-algebraic subset of) the open neighborhood $\Var{N}$ is Nash diffeomorphic with a semi-algebraic subset of $\F^{r(\Sigma+1)}$ \cite[Section 2.9]{BCR1998}.
\emph{Robust $r$-identifiability is a most convenient technical assumption for defining a condition number of the tensor rank decomposition problem. In fact, if a tensor is not robustly $r$-identifiable, then I will define the condition number to be $\infty$.} 
Fortunately, the technical assumption on robust $r$-identifiability is very mild because of the following result.

\begin{lemma}\label{lem_robust_idf}
Let $\Var{S}_\F$ be a generically $r$-identifiable Segre variety, and let $Z$ be the Zariski-closed locus where $r$-identifiability fails. Then, every $\tensor{A} \in \SecZ{r}{\Var{S}_\F}\setminus Z$ is robustly $r$-identifiable. 
\end{lemma}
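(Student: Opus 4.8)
The plan is to show that the locus of non-robustly-$r$-identifiable tensors in $\SecZ{r}{\Var{S}_\F}$ is contained in the bad Zariski-closed set $Z$ together with the (semi-algebraic) non-regular locus of $\SecZ{r}{\Var{S}_\F}$, and then to absorb the latter into $Z$ as well. Concretely, given $\tensor{A} \in \SecZ{r}{\Var{S}_\F}\setminus Z$, I must produce an open neighborhood $\Var{N} \subset \SecZ{r}{\Var{S}_\F}$ of $\tensor{A}$ on which every point has a unique decomposition up to $\sim$. Since $Z$ is Zariski-closed and contains $\Sec{r}{\Var{S}_\F}\setminus\SecZ{r}{\Var{S}_\F}$, its complement $\Var{U} = \Sec{r}{\Var{S}_\F}\setminus Z$ is a Zariski-open, hence Euclidean-open, subset of $\Sec{r}{\Var{S}_\F}$ consisting entirely of rank-$r$, $r$-identifiable tensors; and $\tensor{A}\in\Var{U}$. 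The first thing to check is that such an $\tensor{A}$ is automatically a regular point of $\SecZ{r}{\Var{S}_\F}$: this follows because the smooth (equivalently, over $\C$, regular; over $\R$, I take the Nash-stratum of maximal dimension) locus of $\Sec{r}{\Var{S}_\F}$ is a Zariski-open dense subset, so by shrinking $\Var{U}$ we may assume every point of $\Var{U}$ is regular, and $\Var{U}\cap\SecZ{r}{\Var{S}_\F} = \Var{U}$ since $\Var{U}$ contains no border tensors. Thus $\Var{U}$ itself is an open neighborhood of $\tensor{A}$ inside $\SecZ{r}{\Var{S}_\F}$, every point of which is regular and $r$-identifiable, which is exactly robust $r$-identifiability with $\Var{N} = \Var{U}$.

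In more detail, the key steps in order are: (i) record that $\Var{U} := \Sec{r}{\Var{S}_\F}\setminus Z$ is Euclidean-open in $\Sec{r}{\Var{S}_\F}$ because $Z$ is Zariski-closed; (ii) note $\Var{U}\subset \SecZ{r}{\Var{S}_\F}$ because $Z \supset \Sec{r}{\Var{S}_\F}\setminus\SecZ{r}{\Var{S}_\F}$, so there are no border tensors in $\Var{U}$, and hence $\Var{U}$ is also Euclidean-open as a subset of $\SecZ{r}{\Var{S}_\F}$ in its subspace topology; (iii) intersect with the regular locus—over $\C$ the smooth locus of $\Sec{r}{\Var{S}_\C}$ is Zariski-open dense, and over $\R$ one uses a Nash stratification (\cite[Section 9.1]{BCR1998}) to see that the union of maximal-dimensional strata is Euclidean-open and dense in $\Sec{r}{\Var{S}_\R}$—replacing $Z$ by $Z \cup (\Sec{r}{\Var{S}_\F}\setminus(\text{regular locus}))$, which is still a closed set containing the original $Z$ and still has complement whose points are all $r$-identifiable; and (iv) conclude that for $\tensor{A}\in\SecZ{r}{\Var{S}_\F}\setminus Z$ the set $\Var{N} := \Var{U}$ witnesses robust $r$-identifiability: it is an open neighborhood of $\tensor{A}$ in $\SecZ{r}{\Var{S}_\F}$, it lies inside $\SecZ{r}{\Var{S}_\F}$, and by construction every $\tensor{B}\in\Var{N}$ satisfies $|f^\dagger(\tensor{B})/\sim| = 1$.

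One subtlety to handle carefully is the real case. Over $\R$, generic $r$-identifiability is defined via a Zariski-closed set $Z$, but $\SecZ{r}{\Var{S}_\R}$ is only a semi-algebraic set, not Zariski-dense in its Zariski closure, and the ``regular locus'' of the excerpt (the maximal-dimensional Nash strata) is neither Zariski- nor Euclidean-dense in general. I would address this by working inside the semi-algebraic set $\SecZ{r}{\Var{S}_\R}$ throughout: the statement only asks for an open neighborhood $\Var{N}\subset\SecZ{r}{\Var{S}_\R}$, and a Euclidean-open subset of $\SecZ{r}{\Var{S}_\R}$ consisting of regular points exists around any point of $\SecZ{r}{\Var{S}_\R}\setminus Z$ because $\SecZ{r}{\Var{S}_\R}$ is Euclidean-dense in $\Sec{r}{\Var{S}_\R}$ and the rank-$r$ locus is open there. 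I expect the main obstacle to be precisely bookkeeping the difference between the algebraic notion of $Z$ (a Zariski-closed set used to define generic identifiability) and the semi-algebraic/Euclidean topology in which ``robust'' is phrased, i.e., making rigorous that removing a Zariski-closed set from the secant variety leaves a Euclidean-open set all of whose points inherit $r$-identifiability—once that translation is pinned down, the rest is essentially the observation that "identifiable on a dense open set, plus removing the bad closed locus" already gives the uniform, neighborhood-wise statement.
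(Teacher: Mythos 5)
Your core argument---the complement of the Zariski-closed (hence Euclidean-closed) set $Z$ is Euclidean-open, so any $\tensor{A}\in\SecZ{r}{\Var{S}_\F}\setminus Z$ has an open neighborhood inside $\SecZ{r}{\Var{S}_\F}$ all of whose points are $r$-identifiable rank-$r$ tensors---is exactly the paper's proof, which realizes the same open set via the positive distance $\delta$ from $\tensor{A}$ to the Euclidean-closed set $Z$ and the ball $\Var{B}_\delta$. Up to that point your steps (i), (ii) and (iv) match the paper.

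The problem is your treatment of the regularity clause in the definition of robust $r$-identifiability. You need the \emph{given} $\tensor{A}\in\SecZ{r}{\Var{S}_\F}\setminus Z$ to be a regular point of $\SecZ{r}{\Var{S}_\F}$; instead you shrink $\Var{U}$, i.e.\ replace $Z$ by $Z\cup\bigl(\Sec{r}{\Var{S}_\F}\setminus(\text{regular locus})\bigr)$. That only yields robustness for points outside the enlarged closed set, which is a weaker statement than the lemma: if some singular yet $r$-identifiable point lay outside the original $Z$, your argument says nothing about it, whereas the lemma claims it is robustly $r$-identifiable. The enlargement is not an innocent normalization, because $Z$ is specified as the locus where $r$-identifiability fails, not an arbitrary Zariski-closed set one is free to grow. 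Moreover, your justification in the real case---that the union of the maximal-dimensional Nash strata is Euclidean-open and dense in $\Sec{r}{\Var{S}_\R}$---contradicts the paper's own preliminaries, which stress that the regular locus of a semi-algebraic set is in general neither Zariski- nor Euclidean-dense; and over $\C$ the fact that the smooth locus is Zariski-open and dense does not by itself place the specific $\tensor{A}$ inside it. (To be fair, the paper's proof silently skips the regularity requirement altogether, so your instinct to address it is sound; but a correct fix would have to argue either that $Z$ may be taken to contain the singular locus, or that $r$-identifiable points off $Z$ are automatically regular---not merely discard the possibly non-regular points by enlarging $Z$.)
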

\begin{proof}
Since $\Sec{r}{\Var{S}_\F}$ lives in a Euclidean space there exists a minimum distance $\delta > 0$ from a fixed point $\tensor{A} \in \SecZ{r}{\Var{S}_\F}\setminus Z$ to the Euclidean-closed set $Z$. Taking the Euclidean-open ball $\Var{B}_\delta$ of radius $\delta$ around $\tensor{A}$ by definition does not intersect $Z$. Hence, $\Var{B}_\delta \cap \SecZ{r}{\Var{S}_\F} \subset \SecZ{r}{\Var{S}_\F}\setminus Z$ contains only $r$-identifiable points and the radius $\delta > 0$ can be chosen so that it contains only one connected component, concluding the proof.
\end{proof}

One may wonder why I choose to introduce this technical restriction for defining a condition number. After all, condition numbers could also be defined for set-valued maps $g$ by defining a reasonable premetric for measuring distances between sets. 
I believe there are some reasons that justify the approach developed in this paper.
The first reason is very pragmatic and common in (semi)algebraic geometry: the tensors that are not robustly $r$-identifiable are expected to be contained in a semi-algebraic set of codimension at least $1$ in $\Sec{r}{\Var{S}_\F}$ by \refconj{conj_cov} and \reflem{lem_robust_idf}, so that the definition is valid generically. In other words, if $\Var{S}_\F$ is a generically $r$-identifiable Segre variety and one samples a random rank-$r$ decomposition $\tensor{A} \in \SecZ{r}{\Var{S}_\F}$ from any probability distribution whose support is not contained in the Zariski-closed locus $Z$ where $r$-identifiability fails, then $\tensor{A}$ is both robustly $r$-identifiable and its condition number is well-defined with probability $1$. 
Another reason is that the condition number admits a completely straightforward interpretation for robustly $r$-identifiable tensors: it asymptotically measures the ratio of the forward error and the backward error as in \refeqn{eqn_forward_error_estimate} with respect to some premetric that will be defined in the next section. It admits this straightforward interpretation because $f^\dagger(\tensor{A})/\sim$ locally behaves like an invertible function when $\tensor{A}$ is robustly $r$-identifiable. A third motivation concerns the technique that I employed for defining the condition number in \refthm{thm_condition}: it is inherently local. Hence, if $\tensor{A}$ is robustly $r$-identifiable, a local analysis of one of the solutions $\vect{x} \in f^\dagger(\tensor{A})$ suffices for understanding the global behavior of all solutions $f^\dagger(\tensor{A})$ in a neighborhood of $\tensor{A}$. Finally, since many $\Sec{r}{\Var{S}_\F}$ are generically $r$-identifiable, it is sensible to redefine the tensor rank decomposition problem as the problem of finding the \emph{unique} element in $f^\dagger(\tensor{A})/\sim$ for a given $\tensor{A} \in \SecZ{r}{\Var{S}_\F}$. Then, the set of tensors that admit multiple tensor rank decompositions are defined to be the \emph{ill-posed} inputs. I should mention that this leads to a very natural alternative definition of a condition number, namely the minimum distance to an ill-posed input in the spirit of, e.g., \cite{Demmel1987,BC2013}. However, I will not pursue this interpretation of the proposed condition number in the present paper.

\section{A suitable distance measure}\label{sec_distance}
In interpreting the $N$th largest singular value of Terracini's matrix as a condition number, as is stated in \refthm{thm_informal_theorem}, understanding the influence of the part of the kernel given by \refeqn{eqn_terracini_kernel} is crucial. Consider the following concrete situation. Let $b = (\vect{a}, \vect{b}, \vect{c}) \in \F^{n_1} \times \F^{n_2} \times \F^{n_3}$ be a representative of the rank-$1$ tensor $f(\vect{p}) = \vect{a}\otimes \vect{b}\otimes \vect{c}$, where $\vect{p} = \vecc{b}$. Suppose that the vectorized factor matrices $\vect{p}$ are perturbed by the vector $\Delta_\epsilon = \begin{bmatrix} \epsilon \vect{a}^T & -\epsilon \vect{b}^T & 0\end{bmatrix}^T$, which is contained in the span of $K$, where $K$ is the specific basis in \refeqn{eqn_terracini_kernel}. Then, 
\[
 \vect{p} = 
\begin{bmatrix}
\vect{a} \\ 
\vect{b} \\ 
\vect{c}             
\end{bmatrix}
\quad\text{and}\quad
\vect{p}_\epsilon = \vect{p} + \Delta_\epsilon =
\begin{bmatrix}
(1+\epsilon) \vect{a} \\
(1-\epsilon) \vect{b} \\
\vect{c}
\end{bmatrix}.
\]
For simplicity, let us assume that $\|\vect{a}\|=\|\vect{b}\|=\|\vect{c}\|=1$. Then,
\[
 \|\vect{p} - \vect{p}_\epsilon\|=\sqrt{2} \epsilon 
\quad\text{while}\quad
\| f(\vect{p}) - f(\vect{p}_\epsilon)\| = \|\vect{a}\otimes\vect{b}\otimes\vect{c} - (1+\epsilon)(1-\epsilon)\vect{a}\otimes\vect{b}\otimes\vect{c}\| = \epsilon^2.
\]
This implies that if we would consider the condition number \refeqn{eqn_general_condition_number} with respect to the usual Euclidean norm, then we find that this particular perturbation bounds the relative condition number of the rank decomposition problem at $\vect{p}$ from below by
\[
 \kappa_{2,2} \ge \lim_{\epsilon\to0} \frac{ \| \vect{p} - \vect{p}_\epsilon  \| \cdot \| f(\vect{p}) \| }{ \|f(\vect{p}) - f(\vect{p}_\epsilon) \| \cdot \|\vect{p}\| } = \lim_{\epsilon\to0} \frac{\sqrt{2}}{\sqrt{3}} \epsilon^{-1} \to \infty.
\]
That is, a perturbation in the direction of the kernel results in an unbounded condition number, as was to be expected because the kernel originates from the group action of $\Var{T}$. 
Perturbations $\Delta_\epsilon \in \operatorname{range}(K)$ of sufficiently small norm thus behave like a rescaling. Since rank-1 tensors are determined uniquely up to the action of $\Var{T}$, it seems a natural desire to ignore this action. Acting on this idea, suppose that we choose a different representative of $f(\vect{p}_\epsilon)$ by exploiting the additional freedom offered by $\Var{T}$. Consider the following representative
\[
 \vect{p}_\epsilon' = 
\diag(1-\epsilon, (1-\epsilon)^{-1}, 1) \vect{p}_\epsilon = 
\begin{bmatrix}
(1 - \epsilon^2) \vect{a} \\
\vect{b} \\
\vect{c}
\end{bmatrix} \in \Var{T}\vect{p}_\epsilon.
\]
Now $\|f(\vect{p}) - f(\vect{p}_\epsilon')\| = \epsilon^2$, while $\|\vect{p} - \vect{p}_\epsilon'\| = \epsilon^2$, implying that the condition number should be
\[
 \kappa \ge \lim_{\epsilon\to0} \frac{\epsilon^2}{\epsilon^2 \sqrt{3}} = \frac{1}{\sqrt{3}}.
\]
It will be shown in \refprop{prop_rank1} that the relative condition number with respect to the distance measure that will be introduced shortly is, in fact, equal to $\frac{1}{\sqrt{3}}$.

The foregoing example essentially illustrates that we cannot meaningfully define the condition number of the tensor rank decomposition \emph{with respect to the standard Euclidean distance}. With this metric, the condition number is always unbounded. However, the example also suggests that exploiting the action of $\Var{T}$ is crucial for obtaining a bounded condition number. I propose employing the following measure of distance for defining a condition number of the tensor rank decomposition.
\begin{definition}\label{def_distance_measure}
 Let $b_i = (\sten{a}{i}{1}, \sten{a}{i}{2}, \ldots, \sten{a}{i}{d})$ and $b_i' = (\sten{c}{i}{1}, \sten{c}{i}{2}, \ldots, \sten{c}{i}{d})$ be two set of representatives of the rank-$1$ tensors $p_i = \operatorname{Seg}(b_i)$ and $q_i = \operatorname{Seg}(b_i')$ respectively for $i=1,\ldots,r$.
Let $\vect{p} = \operatorname{vecr}(b_1,b_2,\ldots,b_r)$ and $\vect{q} = \operatorname{vecr}(b_1',b_2',\ldots,b_r')$.
The distance $d(\vect{p}, \vect{q})$ between the vectorized factor matrices $\vect{p}$ and $\vect{q}$ is defined as the least Euclidean distance between $\vect{p}$ and the $\Var{T}$-orbit of $\vect{q}$:
\begin{align*}
d(\vect{p}, \vect{q}) = \inf_{T \in \Var{T}} \| \vect{p} - T \vect{q} \|.
\end{align*}
\end{definition}

\begin{proposition}\label{prop_distance_measure}
Let $b_i$, $b_i'$, $\vect{p}$ and $\vect{q}$ be as in \refdef{def_distance_measure}. Then, the infimum is always attained:
\begin{align*}
d(\vect{p}, \vect{q}) = \min_{T \in \Var{T}} \| \vect{p} - T \vect{q} \| \ge 0. 
\end{align*}
Moreover, the distance $d(\vect{p}, \vect{q}) = d(\vect{q}, \vect{p}) = 0$ if and only if $\vect{p} \sim \vect{q}$. 
\end{proposition}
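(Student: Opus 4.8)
The plan is to reduce the entire proposition to one substantive point --- that the infimum in \refdef{def_distance_measure} is attained --- and then to read off $d(\vect{p},\vect{q})\ge 0$ and the vanishing criterion essentially for free. For attainment I would first exploit the group structure $\Var{T}=\Var{P}\rtimes\Var{B}$: every $T\in\Var{T}$ is $PB$ with $P\in\Var{P}$ and $B\in\Var{B}$, and $\Var{P}$ is finite (there are only $r!$ permutation matrices), so
\[
 \inf_{T\in\Var{T}}\|\vect{p}-T\vect{q}\| \;=\; \min_{P\in\Var{P}}\; \inf_{B\in\Var{B}}\|\vect{p}-PB\vect{q}\|,
\]
and it suffices to show each inner infimum is a minimum. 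Fixing $P=P_0\otimes I_{\Sigma+d}$ and writing $B=\diag(D_1,\ldots,D_r)$, the matrix $P$ only permutes the $r$ blocks of length $\Sigma+d$, so the objective separates as $\|\vect{p}-PB\vect{q}\|^2=\sum_{i=1}^r\sum_{k=1}^d\|\sten{a}{\tau(i)}{k}-\beta_i^{(k)}\sten{c}{i}{k}\|^2$ for a permutation $\tau$ determined by $P$, where for each $i$ the tuple $(\beta_i^{(1)},\ldots,\beta_i^{(d)})$ ranges over $\Var{C}=\{\beta\in\F^d : \prod_{k=1}^d\beta^{(k)}=1\}$ (with $\beta_i^{(1)}=(\alpha_{i,2}\cdots\alpha_{i,d})^{-1}$ and $\beta_i^{(k)}=\alpha_{i,k}$ for $k\ge 2$). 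Hence the minimization over $\Var{B}$ decouples into $r$ copies of the elementary problem: minimize $g(\beta)=\sum_{k=1}^d\|\vect{u}^k-\beta^{(k)}\vect{v}^k\|^2$ over $\beta\in\Var{C}$.

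For this elementary problem I would argue by coercivity. The set $\Var{C}$ is the zero set of the polynomial $\beta^{(1)}\cdots\beta^{(d)}-1$, hence closed in $\F^d$, and each $\beta\in\Var{C}$ automatically has all coordinates nonzero, so no difficulty arises from the excluded hyperplanes. Because $q_i$ is a rank-one tensor, each factor vector $\sten{c}{i}{k}$ --- that is, each $\vect{v}^k$ above --- is nonzero; consequently, if $\|\beta\|\to\infty$ along a sequence in $\Var{C}$, then $|\beta^{(k)}|\to\infty$ for some fixed $k$ along a subsequence, and for that $k$ one has $\|\vect{u}^k-\beta^{(k)}\vect{v}^k\|\ge|\beta^{(k)}|\,\|\vect{v}^k\|-\|\vect{u}^k\|\to\infty$, so $g(\beta)\to\infty$. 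Thus every sublevel set $\{\beta\in\Var{C}:g(\beta)\le c\}$ is closed and bounded, hence compact; taking $c$ slightly above the infimum makes it nonempty, and a continuous function on a nonempty compact set attains its minimum. Summing the $r$ attained minima and then minimizing over the finitely many $P$ yields $d(\vect{p},\vect{q})=\min_{T\in\Var{T}}\|\vect{p}-T\vect{q}\|$, which is $\ge 0$ as an infimum of norms. Equivalently, one can package this step by showing directly that the $\Var{T}$-orbit of $\vect{q}$ is closed in $\F^{r(\Sigma+d)}$ --- it is the orbit of a point whose factor vectors are all nonzero --- and invoking that the distance from a point to a nonempty closed subset of a finite-dimensional Euclidean space is attained.

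With attainment established, the rest is formal. If $\vect{p}\sim\vect{q}$, say $\vect{p}=T\vect{q}$ with $T\in\Var{T}$, then $0\le d(\vect{p},\vect{q})\le\|\vect{p}-T\vect{q}\|=0$. Conversely, if $d(\vect{p},\vect{q})=0$ then the attained minimizer $T^\star\in\Var{T}$ satisfies $\|\vect{p}-T^\star\vect{q}\|=0$, i.e.\ $\vect{p}=T^\star\vect{q}$, i.e.\ $\vect{p}\sim\vect{q}$. So $d(\vect{p},\vect{q})=0\Leftrightarrow\vect{p}\sim\vect{q}$; the same argument with $\vect{p}$ and $\vect{q}$ interchanged gives $d(\vect{q},\vect{p})=0\Leftrightarrow\vect{q}\sim\vect{p}$, and since $\sim$ is an equivalence relation (\refsec{sec_fiber}) we have $\vect{q}\sim\vect{p}\Leftrightarrow\vect{p}\sim\vect{q}$. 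Hence $d(\vect{p},\vect{q})=0$, $d(\vect{q},\vect{p})=0$ and $\vect{p}\sim\vect{q}$ are all equivalent, which is the assertion; I would also remark that $d$ need not be symmetric away from this zero locus, so nothing stronger than this equivalence should be claimed.

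The step I expect to be the main obstacle is the attainment of the infimum, because $\Var{T}$ is non-compact: the scaling entries range over all of $\F\setminus\{0\}$, so a plain compactness argument is unavailable and coercivity must be established instead. The coercivity estimate hinges on the factor vectors of a rank-one tensor being nonzero; this is precisely what prevents a minimizing sequence of scalings from escaping to infinity, and it holds under the standing hypothesis that the $p_i$ and $q_i$ are genuine rank-one tensors.
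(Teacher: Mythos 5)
Your proof is correct and follows essentially the same route as the paper: attainment via coercivity of the objective in the scaling parameters (exploiting that the factor vectors of the rank-one tensors are nonzero and that $\Var{P}$ is finite), after which the zero-distance equivalence is formal. Your packaging through the closed constraint set $\{\beta \in \F^d : \prod_{k}\beta^{(k)}=1\}$ with compact sublevel sets is just a more explicit rendering of the paper's argument that near-optimal scalings $\theta_{k,i}$ stay in a bounded region away from zero.
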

\begin{proof}
First note that $g(T) = \| \vect{p} - T \vect{q} \|$ is a coercive function in the parameters defining $T \in\Var{T}$, so that for a fixed input the minimizer of the optimization problem is always attained. Indeed, since $I \in \Var{T}$, it follows that the optimal $(P \otimes I_{\Sigma+d}) B \in \Var{T}$ satisfies
\begin{align*}
 \|\vect{p} - (P \otimes I_{\Sigma+d}) B \vect{q}\|^2 
 &= \sum_{i=1}^r \| \vecc{b_i} - D_{\pi_i} \vecc{b_{\pi_i}'} \|^2 \\
 &= \sum_{i=1}^r \sum_{k=2}^{d} \| \sten{a}{i}{k} - \theta_{k,\pi_i} \sten{c}{\pi_i}{k} \|^2 + \sum_{i=1}^r \| \sten{a}{i}{1} - (\theta_{2,\pi_i}\cdots\theta_{d,\pi_i})^{-1} \sten{c}{\pi_i}{1} \|^2 \\
 &\le \| \vect{p} - \vect{q}\|^2 < \infty
\end{align*}
where $B = \diag(D_1,\ldots,D_r)$, $D_i = \diag(\theta_{2,i}^{-1}\cdots\theta_{d,i}^{-1} I_{n_1}, \theta_{2,i} I_{n_2},\ldots,\theta_{d,i} I_{n_{d}})$ and $\pi$ is the permutation represented by the permutation matrix $P \in \F^{r \times r}$. Since we have a sum of positive reals, it follows that all $|\theta_{k,i}|$ are uniformly bounded from above by some positive constant $C$, and, hence there is also a uniform lower bound $0 < c$. So we may optimize $\theta_{k,i}$ over the bounded intervals \( [-C, -c] \cup [c, C] \) leading to the conclusion that the infimum is attained.

The claim about $d(\vect{p},\vect{q}) = 0$ is trivial.
\end{proof}

Note in the foregoing proposition that if $\vect{q}' \in \Var{T} \vect{q}$ then $d(\vect{p}, \vect{q}') = d(\vect{p}, \vect{q})$. In particular, if $\tensor{A}$ is an $r$-identifiable tensor, then the slightly abused notation $d(\vect{p}, f^\dagger(\tensor{A}))$ is well-defined.

The distance measure in \refdef{def_distance_measure} may seem artificial, however it naturally measures the error between a given set of vectorized factor matrices $\vect{p} \in \F^{r(\Sigma+d)}$ and a solution of the rank decomposition problem---whose solution \emph{is} only determined up to the action of $\Var{T}$. 
The \texttt{cpderr} function in the popular Tensorlab \cite{Tensorlab} software package for Matlab, for instance, computes the error in a similar way. Let $p_i = (\sten{a}{i}{1}, \sten{a}{i}{2}, \ldots, \sten{a}{i}{d})$ and $q_i = (\sten{b}{i}{1}, \sten{b}{i}{2}, \ldots, \sten{b}{i}{d})$, $i=1,\ldots,r$, be two set of representatives, and then let $A^k = [\sten{a}{j}{k}]_{j=1}^r$ and $B^k = [\sten{b}{j}{k}]_{j=1}^r$ be the corresponding set of factor matrices, where $k=1,\ldots,d$. Then, Tensorlab measures the error between two sets of factor matrices $(A^1,\ldots,A^d)$ and $(B^1,\ldots,B^d)$ as follows:
it returns a $d$-tuple $(\xi_1, \xi_2, \ldots, \xi_d)$ in which $\xi_i = \|A^i - B^i D_i P\|$, where $D_i$ is a diagonal matrix and $P$ is a permutation matrix. Tensorlab does not enforce $D_1 D_2 \dots D_d = I_r$, however if this additional constraint is imposed then a natural measures of the overall error, treating each of the factors equally, is $\sqrt{ \xi_1^2 + \xi_2^2 + \cdots + \xi_d^2 } = d(\vect{a},\vect{b})$, where $\vect{a}=\operatorname{vecr}(p_1,p_2,\ldots,p_r)$ and $\vect{b}=\operatorname{vecr}(q_1,q_2,\ldots,q_r)$.

Before concluding this section, it is natural to wonder whether we could eliminate the scaling indeterminacies on \emph{both} representatives in \refdef{def_distance_measure} by considering
\[
 \widehat{d}(\vect{p}, \vect{q}) = \inf_{T_1, T_2 \in \Var{T}} \| T_1 \vect{p} - T_2 \vect{q} \|,
\]
which is essentially a premetric on the quotient space $\F^{r(\Sigma+1)} / \Var{T}$ and has some additional appeal because at least it is symmetric, i.e., $\widehat{d}(\vect{p}, \vect{q}) = \widehat{d}(\vect{q}, \vect{p})$. However, I believe that this particular choice will not lead to a sensible definition of a condition number because the Hausdorff separation property is lost, i.e., $\widehat{d}(\vect{p},\vect{q}) = 0$ even when $\vect{p} \not\sim \vect{q}$ and $f(\vect{p}) \ne f(\vect{q})$. The reason is that the set over which we optimize is not closed, and this time the two matrices can counteract each other. The simplest example of this phenomenon are the following representatives of two distinct rank-$1$ tensors 
\[
 \vect{p} = \begin{bmatrix} \vect{a}_1 \\ \vect{a}_2 \\ \vect{c} \end{bmatrix}
 \quad\text{and}\quad
 \vect{q} = \begin{bmatrix} \vect{b}_1 \\ \vect{b}_2 \\ \vect{c} \end{bmatrix}.
\]
Choosing $T_1 = T_2 = \diag(\theta,\theta,\theta^{-2})$, we find
\[
 \lim_{\theta \to 0} \left\|\begin{bmatrix} \theta \vect{a}_1 - \theta \vect{b}_1 \\ \theta \vect{a}_2 - \theta \vect{b}_2 \\ \theta^{-2} \vect{c} - \theta^{-2} \vect{c} \end{bmatrix}\right\| = \lim_{\theta\to0} \left\|\begin{bmatrix} \theta \vect{a}_1 - \theta \vect{b}_1 \\ \theta \vect{a}_2 - \theta \vect{b}_2 \\ 0 \end{bmatrix}\right\| \to 0,
\]
hence $\widehat{d}(\vect{p}, \vect{q}) = 0$ while $f(\vect{p}) = \vect{a}_1 \otimes \vect{a}_2 \otimes \vect{c} \ne \vect{b}_1 \otimes \vect{b}_2 \otimes \vect{c} = f(\vect{q})$. That is, the orbits $\Var{T} \vect{p}$ and $\Var{T} \vect{q}$ meet at infinity.

\section{The Iterated Scaling Lemma}\label{sec_isl}
We are now in a position to state the main technical result in connection to Terracini's matrix that we will need for deriving the condition number with respect to the distance measure in \refdef{def_distance_measure}. It states that a perturbation of a representative $p_i$ of a rank-$1$ tensor in the direction of the kernel of Terracini's matrix may always be interpreted as choosing a new representative of that rank-$1$ tensor $\operatorname{Seg}(p_i) \in \Var{S}_\F$ plus a small perturbation $\Delta$ that is contained in the column span of Terracini's matrix. In the formulation and proof of the following lemma, I will slightly abuse notation for brevity, writing $q \in \Var{D} p$ when I mean $\vecc{q} \in \Var{D} \vecc{p}$ with $q, p \in \F^{n_1} \times \cdots \times \F^{n_d}$.

%
%
%
%
\begin{lemma}[Iterated Scaling] \label{lem_isl}
Let $\Var{S}_\F$ be a Segre variety.
For $i=1,2,\ldots, r$, let
\[
p_i = (\sten{a}{i}{1}, \ldots, \sten{a}{i}{d}), \quad
\nabla_i = (\sten{n}{i}{1}, \ldots, \sten{n}{i}{d}),
\quad\text{and}\quad
q_i = p_i + \nabla_i = (\sten{a}{i}{1} + \sten{n}{i}{1}, \ldots, \sten{a}{i}{d} + \sten{n}{i}{d}),
\]
where $\operatorname{Seg}(p_i), \operatorname{Seg}(q_i) \in \Var{S}_\F$. 
Assume that the perturbation $\forcebold{\nabla} = \operatorname{vecr}(\nabla_1, \nabla_2, \ldots, \nabla_r)$ is of sufficiently small norm:
\begin{align} \label{eqn_isl_perturbation_bound}
 \|\forcebold{\nabla}\| \le \frac{1}{2} \lambda^{-1} = \frac{ 1 }{ 2^{d+4} (d-1)^{3/2} } \cdot \min_{1\le i \le r} \min_{2 \le k \le d} \bigl( \|\sten{a}{i}{1}\|^{-1} \|\sten{a}{i}{k}\|^2 \bigr).
\end{align}
If Terracini's matrix $T_\vect{p}$ associated with the vectorized factor matrices $\vect{p} = \operatorname{vecr}(p_1,\ldots,p_r)$ has rank $r(\Sigma+1)$ and if $\forcebold{\nabla}$ is contained in its kernel, i.e.,
\[
T_\vect{p} \forcebold{\nabla} = 0,\] then there is a representative $\dot{p}_i \in \Var{D} p_i$ such that
\(
p_i + \nabla_i = q_i = \dot{p}_i + \Delta_i 
\)
where 
\(\vect{\Delta}_i = \vecc{\Delta_i} \in \cspan(K_i)^\perp,\) and
\(\| \vect{\Delta}_i \| \le 2 \lambda \|\forcebold{\nabla}_i\|^2 \le \|\forcebold{\nabla}_i\|\)
with $\forcebold{\nabla}_i = \vecc{\nabla_i}$ and $K_i$ as in \refeqn{eqn_terracini_kernel}. 
In other words, there exists a factorization
\[
 \vect{p} + \forcebold{\nabla} = \dot{\vect{p}} + \vect{\Delta},
\]
for which
\[
\dot{\vect{p}} \sim \vect{p},\quad  \vect{\Delta} \in \cspan(K)^\perp, \text{ and}\quad \|\vect{\Delta}\| \le 2\lambda \|\forcebold{\nabla}\|^{2} \le \|\forcebold{\nabla}\|,
\]
where $K$ is as in \refeqn{eqn_terracini_kernel}. 
\end{lemma}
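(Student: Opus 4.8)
The plan is to use the block structure of Terracini's matrix to reduce to a single rank-$1$ tensor, and then to produce the representative $\dot p_i$ as the limit of an \emph{iterated rescaling} of Newton type, in which each step exploits the freedom in $\Var{D}$ to replace the component of the current error lying in $\cspan(K_i)$ by one that is quadratically smaller. First, since $T_\vect{p}$ has rank $r(\Sigma+1)=N$, its kernel has dimension $r(\Sigma+d)-N=r(d-1)$; the $r(d-1)$ vectors of $K$ in \refeqn{eqn_terracini_kernel} are linearly independent and are annihilated by $T_\vect{p}$, so they span $\ker T_\vect{p}$. As these basis vectors sit in pairwise disjoint coordinate blocks (the factors $\otimes\vect{e}_i$), the hypothesis $T_\vect{p}\forcebold{\nabla}=0$ forces $\forcebold{\nabla}_i\in\cspan(K_i)$ for every $i$, i.e. $\nabla_i=\sum_{k=2}^{d}\beta_{i,k}\,\sten{k}{i}{k}$ for scalars $\beta_{i,k}$; writing this out, $q_i$ has first block $(1+\sum_{k}\beta_{i,k})\sten{a}{i}{1}$ and $k$-th block $(1-\beta_{i,k})\sten{a}{i}{k}$.

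Because $\cspan(K)$ is the orthogonal direct sum of the $\cspan(K_i)$ and these live in disjoint blocks, $\dot{\vect{p}}$, $\vect{\Delta}$ and all asserted norm and orthogonality relations decouple over $i$. Moreover $\|\forcebold{\nabla}_i\|\le\|\forcebold{\nabla}\|\le\tfrac12\lambda^{-1}\le\tfrac12\lambda_i^{-1}$, where $\lambda_i$ is the per-factor analogue of $\lambda$ and $\lambda\ge\lambda_i$; and $\dot{\vect{p}}\in\Var{B}\vect{p}\subset\Var{T}\vect{p}$ will give $\dot{\vect{p}}\sim\vect{p}$. Hence it suffices to handle one factor, the global bounds following from $\|\vect{\Delta}\|^2=\sum_i\|\vect{\Delta}_i\|^2$ and the elementary inequality $\sum_i t_i^2\le(\sum_i t_i)^2$ for $t_i\ge0$.

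For $r=1$, drop the index and put $w_1=\|\sten{a}{}{1}\|^2$, $w_k=\|\sten{a}{}{k}\|^2$. Seeking $\dot p=\diag(\gamma^{-1}I_{n_1},\delta_2 I_{n_2},\dots,\delta_d I_{n_d})p$ with $\gamma=\delta_2\cdots\delta_d$ and $\Delta:=q-\dot p\perp\cspan(K)$, orthogonality of $\Delta$ to each $\sten{k}{}{\ell}$ becomes, after dividing out the positive scalars $w_1,w_\ell$ (and conjugating, if $\F=\C$), the relation $\delta_\ell=(1-\beta_\ell)-\tfrac{w_1}{w_\ell}s$ with $s:=(1+\sum_k\beta_k)-\gamma^{-1}$, and hence the single scalar fixed-point equation $s=G(s)$, where $G(s)=(1+\sum_k\beta_k)-\prod_{\ell=2}^{d}\bigl((1-\beta_\ell)-\tfrac{w_1}{w_\ell}s\bigr)^{-1}$. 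The key observation is the first-order cancellation $G(0)=(1+\sum_k\beta_k)-\prod_\ell(1-\beta_\ell)^{-1}=O\bigl((\sum_k|\beta_k|)^2\bigr)$: the value at $s=0$ is already quadratic in the perturbation, and iterating $s^{(t+1)}=G(s^{(t)})$ from $s^{(0)}=0$ is exactly the ``replace the kernel part by a quadratically smaller one'' step promised above. The bound \refeqn{eqn_isl_perturbation_bound} is calibrated so that $|\beta_\ell|\le\|\forcebold{\nabla}\|/\|\sten{a}{}{\ell}\|$ and $\tfrac{w_1}{w_\ell}|s|$ stay small enough (note that $\|\forcebold{\nabla}\|\,\|\sten{a}{}{1}\|/\|\sten{a}{}{\ell}\|^2$ is tiny by \refeqn{eqn_isl_perturbation_bound}) for $G$ to map $\{|s|\le 2|G(0)|\}$ into itself as a contraction; the resulting unique fixed point $s^\star$ satisfies $|s^\star|\le 2|G(0)|$, keeps all $\delta_\ell\ne0$, and reading off the blocks gives $\|\Delta\|^2=|s^\star|^2\bigl(w_1+\sum_{k=2}^{d}w_1^2/w_k\bigr)$, which \refeqn{eqn_isl_perturbation_bound} then reduces to $\le(2\lambda\|\forcebold{\nabla}\|^2)^2$, so that $\|\Delta\|\le 2\lambda\|\forcebold{\nabla}\|^2\le\|\forcebold{\nabla}\|$.

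The main obstacle is entirely in the last paragraph: one must control, simultaneously and with explicit constants, the quadratic remainder $G(0)$, the anisotropy coming from possibly very different norms $\|\sten{a}{}{k}\|$ (which is exactly why \refeqn{eqn_isl_perturbation_bound} measures $\|\forcebold{\nabla}\|$ against $\min_k\|\sten{a}{}{1}\|^{-1}\|\sten{a}{}{k}\|^2$ rather than against $1$), and the contraction constant of $G$ on the ball $\{|s|\le 2|G(0)|\}$, and then track all these estimates carefully enough to arrive at the clean factor $2\lambda$; the explicit constant $2^{d+4}(d-1)^{3/2}$ in \refeqn{eqn_isl_perturbation_bound} is precisely what gives every one of these estimates enough slack.
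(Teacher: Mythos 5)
Your reduction to a single rank-$1$ term and to the scalar fixed-point equation $s=G(s)$ is a legitimate reformulation (in your notation $w_k=\|\sten{a}{}{k}\|^2$): a solution of that equation, if it exists and is small enough, does produce the required $\dot p$ and $\Delta$. The proof breaks, however, exactly at the existence argument. Differentiating, $G'(s)=-\prod_{\ell}u_\ell(s)^{-1}\sum_{\ell}\tfrac{w_1}{w_\ell}u_\ell(s)^{-1}$ with $u_\ell(s)=(1-\beta_\ell)-\tfrac{w_1}{w_\ell}s$, so for small $\beta$ and $s$ one has $|G'(s)|\approx\sum_{\ell=2}^{d}w_1/w_\ell$, which is already $d-1\ge2$ when all factor norms are equal (the norm-balanced case used throughout the paper). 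The hypothesis \refeqn{eqn_isl_perturbation_bound} bounds $\|\forcebold{\nabla}\|$, not the ratios $w_1/w_\ell$, so no amount of shrinking the perturbation makes $G$ a contraction on $\{|s|\le2|G(0)|\}$; the fixed point is in fact repelling for the iteration $s^{(t+1)}=G(s^{(t)})$, which therefore does not converge from $s^{(0)}=0$ in general. This is precisely where the paper's scheme differs: there, each step rescales the factors $2,\ldots,d$ so as to absorb the \emph{entire} current kernel component, leaving a residual supported only along $\sten{a}{i}{1}$, whose new kernel part is smaller than the previous one by the factor $\lambda\|\forcebold{\nabla}\|\le\tfrac12$ guaranteed by \refeqn{eqn_isl_perturbation_bound}; that contraction factor is governed by $\|\forcebold{\nabla}\|$, not by the uncontrollable quantity $\sum_\ell w_1/w_\ell$.

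There is a second, related gap: even granting a fixed point with $|s^\star|\le2|G(0)|$, your final estimate does not close. Since $\|\Delta\|=|s^\star|\bigl(w_1+\sum_{\ell}w_1^2/w_\ell\bigr)^{1/2}$ and $|G(0)|$ is of order $(d-1)\|\forcebold{\nabla}\|^2/\min_\ell w_\ell$, the resulting bound exceeds $2\lambda\|\forcebold{\nabla}\|^2$ by a factor of order $\bigl(1+\sum_\ell w_1/w_\ell\bigr)^{1/2}$, which \refeqn{eqn_isl_perturbation_bound} cannot absorb. What you actually need is the sharper bound $|s^\star|\lesssim|G(0)|/\bigl(1+\sum_\ell w_1/w_\ell\bigr)$, as the linearization $1-G'(0)\approx1+\sum_\ell w_1/w_\ell$ suggests. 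So the natural repair of your route is not fixed-point iteration of $G$ but a Newton--Kantorovich or quantitative implicit-function argument applied to $F(s)=s-G(s)$, exploiting $|F'|\ge1$ near the origin; that would yield both existence and the needed size of $s^\star$, and would constitute a genuinely different (and shorter) proof than the paper's iterated rescaling. As written, though, the convergence claim---which is the entire analytic content of the lemma---does not hold.
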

\begin{proof}
We prove the assertion by proving the existence of a linearly convergent sequence of representatives $p_i^{(k)} \in \Var{D} p_i$, $k=1,2,\ldots$, with the following properties
\begin{align} \tag{H0a} \label{eqn_isl_proof_h0}
 q_i - p_i^{(k)} = \Delta_i^{(k)} + \nabla_i^{(k)}, 
\end{align}
where
\[
\vect{\Delta}_i^{(k)} = \vecc{\Delta_i^{(k)}} \in \cspan(K_i)^\perp \quad\text{ and }\quad \forcebold{\nabla}_{i}^{(k)} = \vecc{\nabla_i^{(k)}} \in \cspan(K_i).
\]
The sequence will be constructed in such a way that
\begin{align} \tag{H0b} \label{eqn_isl_proof_h0prime}
 \lim_{k \to \infty} \| \forcebold{\nabla}_i^{(k)} \| \to 0.
\end{align}
As $\forcebold{\nabla}_i^{(k)}$ is contained in a finite-dimensional subspace of $\F^{\Sigma+d}$, it follows immediately that 
\(
 \forcebold{\nabla}_i^{(k)} \to 0
\)
as $k\to\infty$.
Under the assumptions of the lemma both $\vect{p}_i^{(k)} = \vecc{p_i^{(k)}}$ and $\vect{\Delta}_i^{(k)}$ will be shown to be of uniformly bounded norm, so that at least a convergent subsequence exists for which both 
\[
\lim_{k\to\infty} \sten{\Delta}{i}{(k)} \to \vect{\Delta}_i = \vecc{\Delta_i} \quad\text{ and }\quad \lim_{k\to\infty} \sten{p}{i}{(k)} \to \dot{\vect{p}}_i = \vecc{\dot{p}_i}
\]
are well defined. So it remains to prove that a sequence satisfying hypotheses \refeqn{eqn_isl_proof_h0} and \refeqn{eqn_isl_proof_h0prime} exists. 

\paragraph{\bf Part I: A recurrence relation}
Let $p_i^{(1)} = p_i$ and $\Delta_i^{(1)} = 0$, and define $\nabla^{(1)}_i = \nabla_i$. 
Since $\forcebold{\nabla}^{(1)} = \operatorname{vecr}( \nabla^{(1)}_1, \ldots, \nabla^{(1)}_r ) = \forcebold{\nabla}$ is contained in the kernel, we have
\[
 T_\vect{p} \forcebold{\nabla}^{(1)} = 
\sum_{i=1}^r T_{i} \forcebold{\nabla}_i^{(1)} = 0,
\]
where $T_\vect{p}$ and $T_i$ are as in \refeqn{eqn_terracini}.
Suppose that $T_{i}\forcebold{\nabla}_i^{(1)} \ne 0$, so that $\forcebold{\nabla}^{(1)}_i$ would not be contained in the span of $K_i$. Then $T_\vect{p}$'s rank would be strictly less than the expected value $r(\Sigma+1)$, which is a contradiction.
Consequently, the base case $k=1$ of \refeqn{eqn_isl_proof_h0} is true.
Assume now that the statement is true for all $l = 1, 2, \ldots, k$, and then we show that it holds for $k+1$ as well. 
Since $K_i$ is a basis of the kernel of $T_{i}$, we can express 
\begin{align}\label{eqn_isl_proof_expression_v}
\sten{\forcebold{\nabla}}{i}{(k)} = \begin{bmatrix} \sten{k}{i}{2} & \cdots & \sten{k}{i}{d} \end{bmatrix} \sten{v}{i}{(k)} = K_i \sten{v}{i}{(k)}
\end{align}
for some $\sten{v}{i}{(k)} \in \F^{d-1}$. By hypothesis \refeqn{eqn_isl_proof_h0}, $p_i^{(k)} \in \Var{D} p_i$, so that we can write it explicitly as
\begin{align} \label{eqn_isl_proof_pdef}
 p_i^{(k)} = ( \gamma_{1,i}^{(k)} \sten{a}{i}{1}, \gamma_{2,i}^{(k)} \sten{a}{i}{2}, \ldots, \gamma_{d,i}^{(k)} \sten{a}{i}{d} ); 
\end{align}
clearly this is true with $\gamma_{j,i}^{(1)} = 1$ for the base case, and will follow shortly for the induction step as well. Consider the perturbed representative $p_i^{(k)} + \nabla_i^{(k)}$.
By writing this perturbation $\nabla_i^{(k)}$ with respect to the particular basis $K_i$ of the kernel of $T_i$ as in \refeqn{eqn_isl_proof_expression_v}, we find that 
\begin{align} \label{eqn_isl_proof_zdef}
z_i^{(k)}
= p_i^{(k)} + \nabla_i^{(k)} 
= \biggl( (\gamma_{1,i}^{(k)} + v_{2,i}^{(k)} + \cdots + v_{d,i}^{(k)} ) \sten{a}{i}{1}, ( \gamma_{2,i}^{(k)} - v_{2,i}^{(k)} ) \sten{a}{i}{2}, \ldots, ( \gamma_{d,i}^{(k)} - v_{d,i}^{(k)}) \sten{a}{i}{d} \biggr),
\end{align}
where $v^{(k)}_{j,i}$ denotes the $(j-1)$th element of $\vect{v}^{(k)}_i$.
Consequently, $\operatorname{Seg}( z_i^{(k)} )$ and $\operatorname{Seg}( p_i^{(k)} )$ are linearly dependent, i.e., multiples of each other. I claim that the following representative
\begin{align} \label{eqn_isl_proof_defpknext}
 p_i^{(k+1)} = \Bigl( \prod_{j=2}^d ( \gamma_{j,i}^{(k)} - v_{j,i}^{(k)} )^{-1} \sten{a}{i}{1} , ( \gamma_{2,i}^{(k)} - v_{2,i}^{(k)} ) \sten{a}{i}{2}, \ldots, ( \gamma_{d,i}^{(k)} - v_{d,i}^{(k)}) \sten{a}{i}{d} \Bigr) \in \Var{D} p_i
\end{align}
will induce the required sequence. Define
\begin{align} \label{eqn_isl_proof_nextnabla}
 z_i^{(k)} - p_i^{(k+1)} = \biggl( \Bigl( \gamma_{1,i}^{(k)} + \sum_{j=2}^d v_{j,i}^{(k)} - \prod_{j=2}^d (\gamma_{j,i}^{(k)} - v_{j,i}^{(k)})^{-1} \Bigr) \sten{a}{i}{1}, 0, \ldots, 0 \biggr) = \nabla_{i}^{(k+1)} + \widehat{\Delta}_i^{(k+1)},
\end{align}
where the factorization is such that $\forcebold{\nabla}_i^{(k+1)} \in \cspan(K_i)$ and $\vecc{\widehat{\Delta}_i^{(k+1)}} \in \cspan(K_i)^\perp$. Then, 
\[
z_i^{(k)} = p_i^{(k)} + \nabla_i^{(k)} = p_i^{(k+1)} + \nabla_{i}^{(k+1)} + \widehat{\Delta}_i^{(k+1)}.
\]
Since this must be inductively true for $l = 1, 2, \ldots, k$, we find that 
\begin{align} \label{eqn_isl_proof_qi}
 q_i = z_i^{(1)} = p_i^{(k+1)} + \nabla_i^{(k+1)} + \sum_{j=2}^{k+1} \widehat{\Delta}_i^{(j)} = p_i^{(k+1)} + \nabla_i^{(k+1)} + \Delta_i^{(k+1)}.
\end{align}
This proves \refeqn{eqn_isl_proof_h0}. 

\paragraph{\bf Part II: Convergence}
It only remains to show \refeqn{eqn_isl_proof_h0prime}, i.e., the sequence of $\forcebold{\nabla}_i^{(k)}$'s converges to the zero vector. 
We demonstrate that 
\begin{align} \tag{H1} \label{eqn_isl_proof_h1} 
\|\forcebold{\nabla}_i^{(k)}\| \le \lambda^{k-1} \|\forcebold{\nabla}_i\|^k \quad\text{for some constant } \lambda \in \R,\,  0 < \lambda;
\end{align}
for the base case $k=1$ this is readily true as $\forcebold{\nabla}_i^{(1)} = \forcebold{\nabla}_i$ by definition. We may assume the truth of the above statement for $l = 1, 2, \ldots, k$. From \refeqn{eqn_isl_proof_nextnabla} it immediately follows that 
\[
 \| \forcebold{\nabla}_i^{(k+1)} \| \le \| \vect{z}_i^{(k)} - \vect{p}_i^{(k+1)} \| = \|\sten{a}{i}{1}\| \cdot \biggl| \gamma_{1,i}^{(k)} + \sum_{j=2}^{d} v_{j,i}^{(k)} - \prod_{j=2}^d ( \gamma_{j,i}^{(k)}  - v_{j,i}^{(k)} )^{-1} \biggr|,
\]
where $\sten{z}{i}{(k)} = \vecc{ z_i^{(k)} }$.
Our efforts will be focused on showing that the right hand side is bounded by $\lambda^k \|\forcebold{\nabla}_i \|^{k+1}$.
As an intermezzo, note that the boundedness of $\| \vect{\Delta}_i^{(k+1)} \|$ would then follow from the observation that 
\[
 \| \vect{\Delta}_i^{(k+1)} \| \le \sum_{j=2}^{k+1} \| \widehat{\vect{\Delta}}_i^{(j)} \| \le \sum_{j=1}^{k} \| \vect{z}_i^{(j)} - \vect{p}_i^{(j+1)} \| \le \sum_{j=1}^{k} \lambda^{j} \|\forcebold{\nabla}_i\|^{j+1} \le \lambda^{-1} \sum_{j=1}^{\infty} (\lambda \|\forcebold{\nabla}_i\|)^{j+1} = \lambda \frac{\|\forcebold{\nabla}_i\|^2}{1 - \lambda \|\forcebold{\nabla}_i\|},
\]
provided that $\lambda \|\forcebold{\nabla}_i\| < 1$ but this will be satisfied by our assumptions. The boundedness of $\vect{p}_i^{(k)}$ is then an immediate consequence of \refeqn{eqn_isl_proof_qi} and the uniform boundedness of $\vect{q}_i = \vecc{q_i}$, $\forcebold{\nabla}_i^{(k+1)}$ and $\vect{\Delta}_i^{(k+1)}$. In fact, since we will assume \refeqn{eqn_isl_proof_b3}, i.e., $\lambda \|\forcebold{\nabla}_i\| \le \tfrac{1}{2}$, it follows that $\| \vect{\Delta}_i^{(k+1)} \| \le 2 \cdot \tfrac{1}{2} \|\forcebold{\nabla}_i\|$, which already proves the bound on $\|\vect{\Delta}_i\|$.

For bounding $\|\sten{z}{i}{(k)} - \sten{p}{i}{(k+1)}\|$ from above, we proceed as follows. During our derivations, we will assume some additional convenient constraints on certain quantities; they will be considered more carefully in the next part. Consider the coefficient of $\sten{a}{i}{1}$ in \refeqn{eqn_isl_proof_nextnabla}.
First, a bound on the $v_{j,i}^{(k)}$'s is obtained as follows. 
Since $K_i \in \F^{(\Sigma+d) \times (d-1)}$ is a basis for the kernel of $T_i$ it has linearly independent columns. Therefore, the Moore--Penrose pseudoinverse is $K_i^\dagger = (K_i^H K_i)^{-1} K_i^H$, and as $\forcebold{\nabla}_i^{(k)} \in \cspan(K_i)$ one has $\forcebold{\nabla}_i^{(k)} = K_i K_i^\dagger \forcebold{\nabla}_i^{(k)}$, because $K_i K_i^\dagger$ is a projector onto the column span of $K_i$; so $\vect{v}_i^{(k)} = K_i^\dagger \forcebold{\nabla}_i^{(k)}$. If $\varsigma_i(A)$ denotes the $i$th largest singular value of $A \in \F^{m \times n}$ and $\lambda_i(A)$ denotes the $i$th largest eigenvalue of a Hermitian matrix $A \in \F^{m \times m}$, then it is well-known that
\begin{align*}
 \|K_i^\dagger\|_2 = \varsigma_1(K_i^\dagger) = \bigl( \varsigma_{d-1}(K_i) \bigr)^{-1} = \bigl( \lambda_{d-1}( K_i^H K_i ) \bigr)^{-\frac{1}{2}}.
\end{align*}
One may verify by direct computation that 
\[
K_i^H K_i = \diag(\|\sten{a}{i}{2}\|^2, \|\sten{a}{i}{3}\|^2, \ldots, \|\sten{a}{i}{d}\|^2) + \|\sten{a}{i}{1}\|^2 \vect{1}\vect{1}^T,
\] 
where $\vect{1}$ is a vector of length $d-1$ containing only ones. From \cite[Section 2.41]{Wilkinson1965} it follows that the eigenvalues of $K_i^H K_i$ satisfy 
\[
 \lambda_j( K_i^H K_i ) \ge \lambda_j \bigl( \diag(\|\sten{a}{i}{2}\|^2, \|\sten{a}{i}{3}\|^2, \ldots, \|\sten{a}{i}{d}\|^2)  \bigr), \quad j = 1, 2, \ldots, d-1,
\]
so in particular we can conclude that 
\begin{align}\label{eqn_isl_proof_chi}
\|K_i^\dagger\|_2 \le \Bigl( \min_{2\le k\le d} \|\sten{a}{i}{k}\| \Bigr)^{-1} = \chi_i.
\end{align}
We can now bound the $(j-1)$th element $v_{j,i}^{(k)}$, $j=2,3,\ldots,d$, of $\sten{v}{i}{(k)}$ as follows:
\begin{align} 
|v_{j,i}^{(k)}| 
\le \|\sten{v}{i}{(k)}\| &\le \| K_i^\dagger \|_2 \|\forcebold{\nabla}_i^{(k)}\| 
\le \chi_i \| \forcebold{\nabla}_i^{(k)} \| 
\le \chi_i \lambda^{k-1} \|\forcebold{\nabla}_i\|^{k}, \label{eqn_isl_proof_vbound}
\end{align}
where the last step is due to the induction hypothesis \refeqn{eqn_isl_proof_h1}.

For convenience, we define
\[
 \epsilon_{j,i}^{(k)} = \sum_{\ell=1}^{k} v_{j,i}^{(\ell)} \quad\text{ and }\quad \epsilon_{j,i}^{(0)} = 0, 
\]
so that 
\begin{align} \label{eqn_isl_proof_epsbound}
|\epsilon_{j,i}^{(k)}|
\le \sum_{\ell=1}^{k} |v_{j,i}^{(\ell)}| 
\le \chi_i \lambda^{-1} \sum_{\ell=1}^{k} (\lambda \|\forcebold{\nabla}_i\|)^{\ell} 
\le \chi_i \lambda^{-1} \sum_{\ell=1}^{\infty} (\lambda \|\forcebold{\nabla}_i\|)^{\ell} 
= \frac{\chi_i \|\forcebold{\nabla}_i\|}{1 - \lambda\|\forcebold{\nabla}_i\|} = C_i';
\end{align}
the second inequality is by the induction hypothesis, and the penultimate equality holds for $\lambda \|\forcebold{\nabla}_i\| < 1$, which will be satisfied by our assumptions. We additionally assume the following bound
\begin{align} \tag{B0} \label{eqn_isl_proof_b0}
 C_i' \le \frac{1}{2(d-1)},
\end{align}
which will be investigated in more detail later.
Then, for $j=2, 3, \ldots, d$ we find from \refeqn{eqn_isl_proof_pdef} and \refeqn{eqn_isl_proof_defpknext} that
\begin{subequations}\label{eqn_isl_proof_gamma}
\begin{align}
\gamma_{j,i}^{(k+1)} = \gamma_{j,i}^{(k)} - v_{j,i}^{(k)} = \cdots = \gamma_{j,i}^{(1)} - \sum_{\ell=1}^{k} v_{j,i}^{(\ell)} = 1 - \epsilon_{j,i}^{(k)}, \quad j = 2, 3, \ldots, d.
\end{align}
By induction, the above formula also applies for $\gamma_{j,i}^{(\ell)}$ with $\ell = 1, 2, \ldots, k$. 
For $j=1$ we have 
\begin{align}
\gamma_{1,i}^{(k+1)} = \prod_{j=2}^d \bigl( \gamma_{j,i}^{(k)} - v_{j,i}^{(k)} \bigr)^{-1} 
= \prod_{j=2}^d \bigl(1 - \epsilon_{j,i}^{(k-1)} - v_{j,i}^{(k)} \bigr)^{-1}
= \prod_{j=2}^d \bigl(1 - \epsilon_{j,i}^{(k)} \bigr)^{-1},
\end{align}
\end{subequations}
which by induction is also true for $\gamma_{1,i}^{(\ell)}$ with $\ell =1,2,\ldots, k$.
With the above observations, the coefficient of $\sten{a}{i}{1}$ in \refeqn{eqn_isl_proof_nextnabla} may be written as
\begin{align} \label{eqn_isl_proof_acoeff}
\gamma_{1,i}^{(k)} + \sum_{j=2}^d v_{j,i}^{(k)} - \gamma_{1,i}^{(k+1)} 
= \prod_{j=2}^d \bigl( 1 - \epsilon_{j,i}^{(k-1)} \bigr)^{-1} + \sum_{j=2}^d v_{j,i}^{(k)} - \prod_{j=2}^d \bigl( 1 - \epsilon_{j,i}^{(k-1)} - v_{j,i}^{(k)} \bigr)^{-1}.
\end{align}
For proceeding, we will need some fairly convoluted series expansions; however, the key idea revolves around expanding $(1 - x)^{-1}$ with $x \approx 0$ by a Maclaurin series. Since we will be confronted with products of such expansions, it is worthwhile to recall the following general formula:
\begin{align} \label{eqn_isl_proof_maclaurin}
 \prod_{j=1}^n (1 - x_j)^{-1} = \prod_{j=1}^n \sum_{\kappa=0}^\infty x_j^\kappa = \sum_{\kappa=0}^\infty \; \sum_{\|\forcebold{\ell}\|_1 = \kappa} \; \prod_{j=1}^n x_{j}^{\ell_j}, 
\end{align}
where $\forcebold{\ell} = \left[\begin{smallmatrix} \ell_1 & \ell_2 & \cdots & \ell_n\end{smallmatrix}\right] \in \mathbb{N}^{n}$ and $\|\forcebold{\ell}\|_1 = \sum_{j=1}^n |\ell_j|$ is the $1$-norm of $\forcebold{\ell}$;
in the expression it is assumed that $|x_j| < 1$ so that all expansions, including their products, are absolutely convergent.
The last term in \refeqn{eqn_isl_proof_acoeff} can be rewritten as follows:
\begin{align*}
 Z 
= \prod_{j=2}^d \bigl( 1 - \epsilon_{j,i}^{(k-1)} - v_{j,i}^{(k)} \bigr)^{-1} 
&= \prod_{\ell=2}^d \bigl(1 - \epsilon_{\ell,i}^{(k-1)} \bigr)^{-1} \prod_{j=2}^d \frac{1-\epsilon_{j,i}^{(k-1)}}{1 - \epsilon_{j,i}^{(k-1)} - v_{j,i}^{(k)} } \\
&= \prod_{\ell=2}^d \bigl(1 - \epsilon_{\ell,i}^{(k-1)} \bigr)^{-1} \prod_{j=2}^d \Bigl( 1 - \frac{v_{j,i}^{(k)}}{ 1 - \epsilon_{j,i}^{(k-1)} } \Bigr)^{-1} \\
&= \prod_{\ell=2}^d \bigl(1 - \epsilon_{\ell,i}^{(k-1)} \bigr)^{-1} \sum_{\kappa=0}^\infty \sum_{\|\forcebold{\ell}\|_1 = \kappa} \prod_{j=2}^d \Bigl( \frac{ v_{j,i}^{(k)} }{ 1 - \epsilon_{j,i}^{(k-1)} } \Bigr)^{\ell_j},
\end{align*}
where the last step was by \refeqn{eqn_isl_proof_maclaurin}, which requires that 
\begin{align} \tag{B1} \label{eqn_isl_proof_b1}
|v_{j,i}^{(k)}| < |1 - \epsilon_{j,i}^{(k-1)}|;
\end{align}
this hypothesis will be investigated later.
Let us write $Z = S_0 + S_1 + S_\infty$, where 
\begin{align*}
S_0 &= \prod_{j=2}^d \bigl(1 - \epsilon_{j,i}^{(k-1)} \bigr)^{-1} = \gamma_{1,i}^{(k)}, \\ 
S_1 &= \prod_{\ell=2}^d \bigl(1 - \epsilon_{\ell,i}^{(k-1)} \bigr)^{-1} \cdot \sum_{j=2}^d \frac{v_{j,i}^{(k)}}{1 - \epsilon_{j,i}^{(k-1)}}, \text{ and} \\
S_\infty &= \prod_{\ell=2}^d \bigl(1 - \epsilon_{\ell,i}^{(k-1)} \bigr)^{-1} \cdot \sum_{\kappa=2}^\infty \; \sum_{\|\forcebold{\ell}\|_1 = \kappa} \prod_{j=2}^d \Bigl( \frac{ v_{j,i}^{(k)} }{ 1 - \epsilon_{j,i}^{(k-1)} } \Bigr)^{\ell_j}.
\end{align*}
We will need to apply \refeqn{eqn_isl_proof_maclaurin} again to $S_1$ for obtaining the desired result. 
\begin{align*}
 S_1 
&= \sum_{j=2}^d v_{j,i}^{(k)} \Bigl( 1 + \frac{\epsilon_{j,i}^{(k-1)} }{1 - \epsilon_{j,i}^{(k-1)}} \Bigr) \cdot \prod_{\ell=2}^d \bigl(1 - \epsilon_{\ell,i}^{(k-1)} \bigr)^{-1} \\
&= \sum_{j=2}^d v_{j,i}^{(k)} \Bigl( 1 + \frac{\epsilon_{j,i}^{(k-1)} }{1 - \epsilon_{j,i}^{(k-1)}} \Bigr) \cdot \Bigl(1 + \sum_{\kappa=1}^\infty \sum_{\|\forcebold{\ell}\|_1 = \kappa} \prod_{l=2}^d \bigl( \epsilon_{l,i}^{(k-1)} \bigr)^{\ell_l} \Bigr) \\
&= 
\underset{S_1'}{\underbrace{\sum_{j=2}^d v_{j,i}^{(k)}}}
+ \underset{S_1''}{\underbrace{\sum_{j=2}^d v_{j,i}^{(k)} \Bigl(\sum_{\kappa=1}^\infty \sum_{\|\forcebold{\ell}\|_1 = \kappa} \prod_{l=2}^d \bigl( \epsilon_{l,i}^{(k-1)} \bigr)^{\ell_l} \Bigr) }}
+ \underset{S_1'''}{\underbrace{\sum_{j=2}^d v_{j,i}^{(k)} \frac{\epsilon_{j,i}^{(k-1)}}{1 - \epsilon_{j,i}^{(k-1)}} \cdot \prod_{\ell=2}^d \bigl(1 - \epsilon_{\ell,i}^{(k-1)} \bigr)^{-1}}}.
\end{align*}
It is now clear that 
\begin{align*}
 \refeqn{eqn_isl_proof_acoeff} = S_0 + S_1' - Z = S_0 + S_1' - (S_0 + S_1' + S_1'' + S_1''' + S_\infty) = -(S_1'' + S_1''' + S_\infty).
\end{align*}
If we could show that the last inequality in
\[
|\refeqn{eqn_isl_proof_acoeff}| = |S_1'' + S_1''' + S_\infty| \le |S_1''| + |S_1'''| + |S_\infty| \le \|\sten{a}{i}{1}\|^{-1} \lambda^{k} \|\forcebold{\nabla}_i\|^{k+1}
\]
holds, then the proof of \refeqn{eqn_isl_proof_h1} would be concluded. We proceed along this line.
From \refeqn{eqn_isl_proof_epsbound}, 
\[
 1 - \epsilon_{j,i}^{(k-1)} \ge 1 - |\epsilon_{j,i}^{(k-1)}| \ge 1 - C_i',
\]
and as $|\epsilon_{j,i}^{(k-1)}| \le \tfrac{1}{2}$ by \refeqn{eqn_isl_proof_b0}, it follows that 
\begin{align*}
\bigl| 1 - \epsilon_{j,i}^{(k-1)} \bigr|^{-1} = \bigl(1 - \epsilon_{j,i}^{(k-1)} \bigr)^{-1} \le ( 1 - C_i' )^{-1} = C_i;
\end{align*}
this bound is uniform in the sense that it depends neither on $j$ nor $k$.
Consequently,
\begin{align}
 \Biggl| \sum_{\|\forcebold{\ell}\|_1 = \kappa} \prod_{j=2}^d \Bigl( \frac{ v_{j,i}^{(k)} }{ 1 - \epsilon_{j,i}^{(k-1)} } \Bigr)^{\ell_j} \Biggr| 
\le C_i^\kappa \sum_{\|\forcebold{\ell}\|_1 = \kappa} \prod_{j=2}^d \bigl| v_{j,i}^{(k)} \bigr|^{\ell_j}
&\le C_i^{\kappa} \| \sten{v}{i}{(k)} \otimes \cdots \otimes \sten{v}{i}{(k)} \|_1 \nonumber \\
&\le C_i^{\kappa} (d-1)^{\kappa/2} \|\sten{v}{i}{(k)}\|^\kappa, \label{eqn_isl_proof_sigpi1}
\end{align}
where the second step is because every $\forcebold{\ell}$ with $\|\forcebold{\ell}\|_1=\kappa$ can be identified with the multi-index
\[
 ( \underset{\ell_1}{\underbrace{1, \ldots, 1}}, \underset{\ell_2}{\underbrace{2,\ldots,2}}, \ldots, \underset{\ell_{d-1}}{\underbrace{d-1, \ldots, d-1}} )
\]
whose length is $\kappa$. From this identification, it follows that the symmetric part of $\sten{v}{i}{(k)} \otimes \cdots \otimes \sten{v}{i}{(k)}$ contains precisely all summands of $\sum_{\|\forcebold{\ell}\|_1 = \kappa} \prod_{j=2}^d \bigl| v_{j,i}^{(k)} \bigr|^{\ell_j}$. As a result, $S_\infty$ can be bounded as follows
\begin{align*}
|S_\infty| 
\le C_i^{d-1} \sum_{\kappa=2}^\infty \bigl( C_i \sqrt{d-1} \|\vect{v}_i^{(k)}\| \bigr)^\kappa
\le \frac{ C_i^{d+1} (d-1) \chi_i^{2} \lambda^{2k-2} \|\forcebold{\nabla}_i\|^{2k}}{1 - C_i \sqrt{d-1} \chi_i \lambda^{k-1}\|\forcebold{\nabla}_i\|^k},
\end{align*}
where \refeqn{eqn_isl_proof_vbound} was used to bound $\|\sten{v}{i}{(k)}\|$ in the second inequality; herein, we assumed that
\begin{align} \tag{B2} \label{eqn_isl_proof_b2}
 C_i \sqrt{d-1} \chi_i \lambda^{k-1} \| \forcebold{\nabla}_i \|^k \le \frac{1}{2}
\end{align}
so that the series is convergent.
In an analogous fashion as in the derivation of \refeqn{eqn_isl_proof_sigpi1}, one finds
\begin{align}\label{eqn_isl_proof_epssumbound}
 \Bigl| \sum_{\|\forcebold{\ell}\|_1=\kappa} \prod_{j=2}^d \bigl( \epsilon_{j,i}^{(k-1)} \bigr)^{\ell_j} \Bigr| \le \|\forcebold{\epsilon}_i^{(k-1)} \otimes \cdots \otimes \forcebold{\epsilon}_i^{(k-1)}\|_1 \le (d-1)^{\kappa/2} \| \forcebold{\epsilon}_i^{(k-1)} \|^\kappa \le \bigl( (d-1) C_i' \bigr)^\kappa,
\end{align}
where $\forcebold{\epsilon}_{i}^{(k)} = \bigl[\begin{smallmatrix} \epsilon_{2,i}^{(k-1)} & \cdots & \epsilon_{d,i}^{(k-1)} \end{smallmatrix}\bigr]$ and the last step is due to \refeqn{eqn_isl_proof_epsbound}. Then, the bound for $S_1''$ is 
\begin{align*}
|S_1''| 
&\le \sum_{j=2}^d \bigl| v_{j,i}^{(k)} \bigr| \cdot \Bigl( \sum_{\kappa=1}^\infty \bigl( (d-1) C_i' \bigr)^\kappa  \Bigr) 
= \frac{(d-1)C_i'}{1-(d-1)C_i'} \sum_{j=2}^d \bigl| v_{j,i}^{(k)} \bigr| = \frac{(d-1)C_i'}{1-(d-1)C_i'} \| \vect{v}_i^{(k)} \|_1;
\end{align*}
the Maclaurin series is convergent because of \refeqn{eqn_isl_proof_b0}.
We continue with a bound for $S_1'''$: 
\begin{align*}
 |S_1'''| 
\le \sum_{j=2}^d |v_{j,i}^{(k)} | C_i' C_i \prod_{\ell=2}^d |1 - \epsilon_{\ell,i,k-1}|^{-1} 
\le C_i' C_i^{d} \sum_{j=2}^d |v_{j,i}^{(k)} | = C_i' C_i^d \| \vect{v}_i^{(k)} \|_1.
\end{align*}
It follows from the foregoing two bounds that 
\begin{align*}
|S_1''| + |S_1'''| 
\le \Bigl( \frac{(d-1)C_i'}{1-(d-1)C_i'} + C_i' C_i^d \Bigr) \| \vect{v}_i^{(k)} \|_1 
&\le C_i' \sqrt{d-1} \Bigl( \frac{d-1}{1-(d-1)C_i'} + C_i^d \Bigr) \chi_i \lambda^{k-1} \|\forcebold{\nabla}_i\|^k \\
&\le \Bigl( \frac{d-1}{1-(d-1)C_i'} + C_i^d \Bigr) \frac{\chi_i^{2} \sqrt{d-1}}{1 - \lambda\|\forcebold{\nabla}_i\|} \lambda^{k-1} \|\forcebold{\nabla}_i\|^{k+1},
\end{align*}
where we used $\|\vect{v}_i^{(k)}\|_1 \le \sqrt{d-1} \|\vect{v}_i^{(k)}\|$ and \refeqn{eqn_isl_proof_vbound}. Let $\zeta = \frac{d-1}{1-(d-1)C_i'} + C_i^d$. Combining the above with the bound for $S_\infty$, we get  
\[
 |\refeqn{eqn_isl_proof_acoeff}| 
\le \chi_i^{2} \sqrt{d-1} \Bigl(  \frac{\zeta}{1-\lambda\|\forcebold{\nabla}_i\|} + \frac{ C_i^{d+1} \sqrt{d-1} \lambda^{k-1} \|\forcebold{\nabla}_i\|^{k-1} }{1 - C_i \sqrt{d-1} \chi_i \lambda^{k-1}\|\forcebold{\nabla}_i\|^k} \Bigr) \lambda^{k-1} \|\forcebold{\nabla}_i\|^{k+1}.
\]
Therefore, it suffices to demonstrate that the right hand side in the above expression is less than $\|\sten{a}{i}{1}\|^{-1} \lambda^k \|\forcebold{\nabla}_i\|^{k+1}$. So, it suffices showing that
\begin{align} \label{eqn_isl_proof_abound}
\|\sten{a}{i}{1}\| \chi_i^2  \sqrt{d-1} \Bigl(  \frac{\zeta}{1-\lambda\|\forcebold{\nabla}_i\|} + \frac{ C_i^{d+1} \sqrt{d-1} \lambda^{k-1} \|\forcebold{\nabla}_i\|^{k-1} }{1 - C_i \sqrt{d-1} \chi_i \lambda^{k-1}\|\forcebold{\nabla}_i\|^k} \Bigr) \le \lambda.
\end{align}
Let us assume additionally that 
\begin{align} \tag{B3} \label{eqn_isl_proof_b3}
 \lambda \|\forcebold{\nabla}_i\| \le \frac{1}{2}.
\end{align}
Then, by exploiting \refeqn{eqn_isl_proof_b0}, \refeqn{eqn_isl_proof_b2}, and \refeqn{eqn_isl_proof_b3}, it follows that \refeqn{eqn_isl_proof_abound} is implied by the following inequality
\begin{align}\label{eqn_isl_proof_b4}\tag{B4}
\|\sten{a}{i}{1}\| \chi_i^2  \sqrt{d-1} \bigl( 4(d-1) + 2^{d+1} + 2^{d-k+3} \sqrt{d-1} \bigr)
\le \lambda. 
\end{align}
Note that $\lambda > 0$ is a free parameter, i.e., it is not specified as input to the lemma, so it can simply be chosen so as to satisfy the above inequality. For instance, let us choose 
\[
\lambda = 2^{d+3} (d-1)^{3/2} \cdot \max_{1 \le i \le r} \bigl( \chi_i^2 \|\sten{a}{i}{1}\| \bigr).
\]
Provided that \refeqn{eqn_isl_proof_b0}, \refeqn{eqn_isl_proof_b1}, \refeqn{eqn_isl_proof_b2}, \refeqn{eqn_isl_proof_b3} and \refeqn{eqn_isl_proof_b4} are true, this proves \refeqn{eqn_isl_proof_h1}, which implies \refeqn{eqn_isl_proof_h0prime}.

\paragraph{\bf Part III: Eliminating assumptions}
For concluding the proof, we show that \refeqn{eqn_isl_perturbation_bound} implies \refeqn{eqn_isl_proof_b0}, \refeqn{eqn_isl_proof_b1}, \refeqn{eqn_isl_proof_b2} and \refeqn{eqn_isl_proof_b3}. 
The assumption \refeqn{eqn_isl_proof_b0} can be eliminated as follows:
\begin{align*}
\refeqn{eqn_isl_proof_b0} 
&\quad\Leftrightarrow\quad \frac{\|\forcebold{\nabla}_i\|}{1 - \lambda\|\forcebold{\nabla}_i\|} \le \frac{1}{2(d-1)} \chi_i^{-1}
\quad\Leftarrow\quad \Bigl( \|\forcebold{\nabla}_i\| \le \frac{1}{4(d-1)} \chi_i^{-1} \Bigr) \wedge \refeqn{eqn_isl_proof_b3},
\end{align*}
where $\wedge$ denotes the logical conjunction. The last statement also implies \refeqn{eqn_isl_proof_b1}, because we have
\begin{align*}
\refeqn{eqn_isl_proof_b1} 
\quad\Leftarrow\quad
\chi_i \lambda^{k-1} \|\forcebold{\nabla}_i\|^k < 1 - \frac{\chi_i \|\forcebold{\nabla}_i\|}{1 - \lambda\|\forcebold{\nabla}_i\|}
&\quad\Leftarrow\quad \Bigl( \chi_i \|\forcebold{\nabla}_i\| < 1 - \frac{\chi_i \|\forcebold{\nabla}_i\|}{1 - \lambda\|\forcebold{\nabla}_i\|} \Bigr) \wedge \refeqn{eqn_isl_proof_b3} \\
&\quad\Leftarrow\quad \biggl( \chi_i \|\forcebold{\nabla}_i\| \Bigl(1 + \frac{1}{1 - \lambda\|\forcebold{\nabla}_i\|} \Bigr)  < 1 \biggr) \wedge \refeqn{eqn_isl_proof_b3} \\
&\quad\Leftarrow\quad \Bigl( \|\forcebold{\nabla}_i\| \le \frac{1}{3} \chi_i^{-1} \Bigr) \wedge \refeqn{eqn_isl_proof_b3}.
\end{align*}
The elimination of \refeqn{eqn_isl_proof_b2} proceeds as follows:
\begin{align*}
\refeqn{eqn_isl_proof_b2}
\quad\Leftrightarrow\quad
\lambda^{k-1}\|\forcebold{\nabla}_i\|^k \le \frac{1}{2 \sqrt{d-1} C_i } \chi_i^{-1}
&\quad\Leftarrow\quad \Bigl( \|\forcebold{\nabla}_i\| \le \frac{1}{4\sqrt{d-1}} \chi_i^{-1} \Bigr) \wedge \refeqn{eqn_isl_proof_b0} \wedge \refeqn{eqn_isl_proof_b3} \\
&\quad\Leftarrow\quad \Bigl( \|\forcebold{\nabla}_i\| \le \frac{1}{4(d-1)} \chi_i^{-1} \Bigr) \wedge \refeqn{eqn_isl_proof_b3}.
\end{align*}
It follows that \refeqn{eqn_isl_proof_b3}, or, equivalently, $\|\forcebold{\nabla}_i\| \le \tfrac{1}{2} \lambda^{-1}$ is the strongest bound, because we have
\begin{align*}
\|\forcebold{\nabla}_i\| 
\le \frac{1}{2} \lambda^{-1} 
= \frac{1}{2^{d+2}\sqrt{d-1}} \cdot \frac{1}{4(d-1)} \min_{1 \le i \le r} \bigl( \|\sten{a}{i}{1}\|^{-1} \chi_i^{-2} \bigr)
< \frac{1}{4(d-1)} \chi_i^{-1}
< \frac{1}{3} \chi_i^{-1},
\end{align*}
where we exploited $\|\sten{a}{i}{1}\|^{-1} \chi_i^{-1} = \|\sten{a}{i}{1}\|^{-1} \min_{2 \le k \le d} \|\sten{a}{i}{k}\| \le \|\sten{a}{i}{1}\| \|\sten{a}{i}{1}\|^{-1} = 1$.
Thus, \refeqn{eqn_isl_proof_b3} implies the assumptions \refeqn{eqn_isl_proof_b0}, \refeqn{eqn_isl_proof_b1}, \refeqn{eqn_isl_proof_b2}, \refeqn{eqn_isl_proof_b3}, \refeqn{eqn_isl_proof_b4} as well as the hypotheses \refeqn{eqn_isl_proof_h0}, \refeqn{eqn_isl_proof_h0prime} and \refeqn{eqn_isl_proof_h1} for all $i=1,2,\ldots,r$ simultaneously, hereby concluding the main proof.

\paragraph{\bf Part IV: Alternative formulation}
It is clear that taking 
\[
 \vect{\Delta}^T = \begin{bmatrix} \vect{\Delta}_1^T & \cdots & \vect{\Delta}_r^T \end{bmatrix}
 \quad\text{and}\quad
 \dot{\vect{p}}^T = \begin{bmatrix} \vecc{\dot{p}_1}^T & \cdots & \vecc{\dot{p}_r}^T \end{bmatrix}
\]
satisfies $\dot{\vect{p}} \sim \vect{p}$ and $\|\vect{\Delta}\| \le \|\forcebold{\nabla}\|$. 
By assumption, the columns of $K$ form a basis of the kernel of $T_\vect{p}$. Hence, the Moore--Penrose pseudoinverse of $K$ is $K^\dagger = (K^H K)^{-1} K^H$. As $K K^\dagger$ is a projector onto the column span of $K$, the claim $\vect{\Delta} \in \cspan(K)^\perp$ is equivalent with 
\(
K K^\dagger \vect{\Delta} = 0. 
\)
Since 
\[
 K^H \vect{\Delta} = 
 \sum_{i=1}^r K_i^H \vect{\Delta}_i = 0,
\]
the proof is concluded.
\end{proof}
%
%
%
%
%

As can be understood from Part I in the proof of the Iterated Scaling Lemma, there even exists an iterative algorithm for obtaining the desired factorization. The lemma essentially proves that it always converges for sufficiently small input perturbations $\forcebold{\nabla}$. An implementation of this algorithm is included in the ancillary files accompanying this paper; some numerical experiments illustrating the Iterated Scaling Lemma will be presented in \refsec{sec_illustration_main_thm}.

A consequence of the proof of \reflem{lem_isl} is that Terracini's matrix in the new representatives $\dot{p}_i$ is not very different from Terracini's matrix in the original unperturbed representatives $p_i$.

%
%
%
%
%
%
\begin{corollary}\label{cor_isl_terracini}
Let all the assumptions of \reflem{lem_isl} hold, and let $\forcebold{\nabla}$, $p_i$ and $\dot{p}_i$ be as in \reflem{lem_isl}. Let $T_\vect{p}$ be Terracini's matrix \refeqn{eqn_terracini} in $\vect{p}=\operatorname{vecr}(p_1,\ldots,p_r)$, and let $T_{\dot{\vect{p}}}$ be Terracini's matrix in $\dot{\vect{p}} = \operatorname{vecr}(\dot{p}_1,\ldots,\dot{p}_r)$.
Then, 
\[
 T_{\dot{\vect{p}}} = T_\vect{p} \dot{D} = T_\vect{p} (I + \dot{E}),
\]
where the diagonal matrix $\dot{D}$ tends to the identity as $\|\forcebold{\nabla}\|$ tends to zero; specifically, there is a constant $C>0$ such that 
\(
\|\dot{E}\|_2 = \|\dot{D} - I\|_2 \le C \|\forcebold{\nabla}\|.
\)
\end{corollary}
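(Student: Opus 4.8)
The plan is to exploit the description of $\dot p_i$ furnished by \reflem{lem_isl}: each $\dot p_i$ lies in the orbit $\Var{D} p_i$, so we may write $\dot p_i = (\beta_{1,i}\sten{a}{i}{1},\,\beta_{2,i}\sten{a}{i}{2},\,\ldots,\,\beta_{d,i}\sten{a}{i}{d})$ for scalars $\beta_{k,i}$ satisfying $\prod_{k=1}^d\beta_{k,i}=1$. The factorization $T_{\dot{\vect p}} = T_\vect{p}\dot D$ then follows from multilinearity of the tensor product: the $k$-th column block of the $i$-th Terracini block $\dot T_i$ in $\dot{\vect p}$ is, by \refeqn{eqn_terracini}, the product $\beta_{1,i}\sten{a}{i}{1}\otimes\cdots\otimes\beta_{k-1,i}\sten{a}{i}{k-1}\otimes I_{n_k}\otimes\beta_{k+1,i}\sten{a}{i}{k+1}\otimes\cdots\otimes\beta_{d,i}\sten{a}{i}{d}$, and pulling the scalars $\beta_{l,i}$ with $l\ne k$ out in front shows that this equals $\bigl(\prod_{l\ne k}\beta_{l,i}\bigr)$ times the $k$-th column block of $T_i$. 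Since $\prod_{l=1}^d\beta_{l,i}=1$ this prefactor equals $\beta_{k,i}^{-1}$, whence $\dot T_i = T_i\dot D_i$ with the diagonal matrix $\dot D_i = \diag(\beta_{1,i}^{-1}I_{n_1},\ldots,\beta_{d,i}^{-1}I_{n_d})$. Stacking blockwise gives $T_{\dot{\vect p}} = T_\vect{p}\dot D$ with $\dot D = \diag(\dot D_1,\ldots,\dot D_r)$, which is diagonal, so it remains only to bound $\dot E = \dot D - I$.

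For the quantitative part I would reopen the proof of \reflem{lem_isl} and pass its per-iterate estimates to the limit. In the notation there, the representatives $p_i^{(k)} = (\gamma_{1,i}^{(k)}\sten{a}{i}{1},\ldots,\gamma_{d,i}^{(k)}\sten{a}{i}{d})$ satisfy, by \refeqn{eqn_isl_proof_gamma}, $\gamma_{j,i}^{(k+1)} = 1-\epsilon_{j,i}^{(k)}$ for $j\ge2$ while $\gamma_{1,i}^{(k+1)} = \prod_{j=2}^d(1-\epsilon_{j,i}^{(k)})^{-1}$, where the partial sums $\epsilon_{j,i}^{(k)}$ obey the uniform bound $|\epsilon_{j,i}^{(k)}|\le C_i' = \tfrac{\chi_i\|\forcebold{\nabla}_i\|}{1-\lambda\|\forcebold{\nabla}_i\|}$ of \refeqn{eqn_isl_proof_epsbound}, with moreover $(d-1)C_i'\le\tfrac12$ by \refeqn{eqn_isl_proof_b0} (here $\chi_i$ is as in \refeqn{eqn_isl_proof_chi}). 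Since the lemma guarantees $p_i^{(k)}\to\dot p_i$, taking $k\to\infty$ yields $\beta_{j,i} = 1-\epsilon_{j,i}$ for $j\ge2$ with $|\epsilon_{j,i}|\le C_i'$, and $\beta_{1,i}^{-1} = \prod_{j=2}^d(1-\epsilon_{j,i})$; moreover, because $\lambda\|\forcebold{\nabla}_i\|\le\tfrac12$ by \refeqn{eqn_isl_proof_b3} and $\|\forcebold{\nabla}_i\|\le\|\forcebold{\nabla}\|$, we get $C_i'\le 2\chi_i\|\forcebold{\nabla}_i\|\le 2\chi_i\|\forcebold{\nabla}\|$.

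Since $\dot E$ is diagonal, $\|\dot E\|_2 = \max_{k,i}|\beta_{k,i}^{-1}-1|$, so it suffices to bound each entry. For $k\ge2$ one has $|\beta_{k,i}|\ge 1-C_i'\ge\tfrac12$, hence $|\beta_{k,i}^{-1}-1| = |\epsilon_{k,i}|/|\beta_{k,i}|\le 2C_i'\le 4\chi_i\|\forcebold{\nabla}\|$. For $k=1$, expanding the product and bounding each term by its absolute value gives $|\beta_{1,i}^{-1}-1| = \bigl|\prod_{j=2}^d(1-\epsilon_{j,i})-1\bigr|\le (1+C_i')^{d-1}-1\le 2(d-1)C_i'\le 4(d-1)\chi_i\|\forcebold{\nabla}\|$, where the middle step uses $(1+x)^{m}-1\le e^{mx}-1\le 2mx$, valid for $mx\le\tfrac12$. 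Taking $C = 4(d-1)\max_{1\le i\le r}\chi_i$ — a genuine constant depending only on $d$ and the norms of the $\sten{a}{i}{k}$ — then yields $\|\dot E\|_2\le C\|\forcebold{\nabla}\|$, and in particular $\dot D\to I$ as $\|\forcebold{\nabla}\|\to 0$. The identity $T_{\dot{\vect p}} = T_\vect{p}\dot D$ itself is immediate from multilinearity, so the only place requiring care is this second step: translating the iteration-dependent bounds of \reflem{lem_isl} into a clean $O(\|\forcebold{\nabla}\|)$ estimate for the limiting scaling factors $\beta_{k,i}$. That is essentially bookkeeping, as the nontrivial inequalities ($|\epsilon_{j,i}^{(k)}|\le C_i'$, $(d-1)C_i'\le\tfrac12$, $\lambda\|\forcebold{\nabla}_i\|\le\tfrac12$) are all already in hand from the proof of the lemma; one only passes to the limit and collects constants.
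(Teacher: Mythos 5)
Your proposal is correct and follows essentially the same route as the paper: you obtain the factorization $T_{\dot{\vect p}}=T_\vect{p}\dot D$ from multilinearity together with $\prod_k\gamma_{k,i}=1$, identify the limiting scaling factors with $1-\epsilon_{j,i}^{(\infty)}$ via \refeqn{eqn_isl_proof_gamma}, and bound $|\gamma_{j,i}^{-1}-1|$ using \refeqn{eqn_isl_proof_epsbound}, \refeqn{eqn_isl_proof_b0} and \refeqn{eqn_isl_proof_b3}, arriving at the same constant $C=4(d-1)\max_i\chi_i$. The only deviations are cosmetic: you bound the $j\ge2$ entries by $|\epsilon|/|1-\epsilon|$ and the $j=1$ entry by $(1+C_i')^{d-1}-1\le 2(d-1)C_i'$, where the paper uses Maclaurin/geometric-series estimates to the same effect.
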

\begin{proof}
Consider the definition of the representative $p_i^{(k)}$ in \refeqn{eqn_isl_proof_pdef}. The limit for $k\to\infty$ can be written as
\begin{align}
 \dot{p}_i 
= \lim_{k\to\infty} p_i^{(k)} 
&= \Bigl( \sten{a}{i}{1} \cdot \lim_{k\to\infty} \prod_{j=2}^{d} \bigl( 1 - \epsilon_{j,i}^{(k)} \bigr)^{-1}, \sten{a}{i}{2} \cdot \lim_{k\to\infty} \bigl(1 - \epsilon_{2,i}^{(k)}\bigr), \ldots, \sten{a}{i}{d} \cdot \lim_{k\to\infty} \bigl(1 - \epsilon_{d,i}^{(k)}\bigr) \Bigr) \nonumber \\
&= ( \gamma_{1,i} \sten{a}{i}{1}, \gamma_{2,i} \sten{a}{i}{2}, \ldots, \gamma_{d,i} \sten{a}{i}{d} )
\label{eqn_isl_cor_diagonal}
\end{align}
because of \refeqn{eqn_isl_proof_gamma}.
Note that the series
\begin{align}\label{eqn_isl_cor_eps}
 \gamma_{j,i} = 1 - \epsilon_{j,i}^{(\infty)} = 1 - \sum_{k=1}^\infty v_{j,i}^{(k)} > 0, \quad j=2,3,\ldots,d,
\end{align}
are absolutely convergent because the terms in the infinite sequences may be bounded as in \refeqn{eqn_isl_proof_vbound}, and the sum $\chi_i \lambda^{-1} \sum_{k=1}^\infty (\lambda\|\forcebold{\nabla}\|)^k$ is absolutely convergent because of \refeqn{eqn_isl_proof_b3}. Since 
\begin{align}\label{eqn_isl_cor_gam0}
 \gamma_{1,i} = (\gamma_{2,i} \gamma_{3,i} \cdots \gamma_{d,i})^{-1},
\end{align}
the above $\gamma_{j,i}$ yield the explicit expressions for the coefficients of $\dot{p}_i$. Notice that the above furnishes an alternative proof that the sequence of $p_i^{(k)}$'s converges. It follows immediately from the definition of Terracini's matrix in \refeqn{eqn_terracini} that we can write
\begin{align*}
\dot{T}_{i} 
= T_{i} \cdot \diag\Biggl( I_{n_1} \cdot \prod_{\substack{j=1\\ j \ne 1}}^d \gamma_{j,i},\, \ldots,\, I_{n_d} \cdot \prod_{\substack{j=1\\ j \ne d}}^d \gamma_{j,i} \Biggr) 
= T_{i} \cdot \diag\bigl( \gamma_{1,i}^{-1} I_{n_1}, \gamma_{2,i}^{-1} I_{n_2}, \ldots, \gamma_{d,i}^{-1} I_{n_d} \bigr)
= T_{i} \dot{D}_i,
\end{align*}
so that Terracini's matrix $T_{\dot{\vect{p}}}$ in the points $\dot{p}_i$ is given by
\[
 T_{\dot{\vect{p}}} = \begin{bmatrix}\dot{T}_1 & \cdots & \dot{T}_r \end{bmatrix} = T_\vect{p} \cdot \diag(\dot{D}_1, \dot{D}_2, \ldots, \dot{D}_r) = T_\vect{p} \dot{D}.
\]
From the definition of $\dot{D}$ it suffices demonstrating that $|\gamma_{j,i}^{-1} - 1| \le C \|\forcebold{\nabla}\|$. For $j=2,3,\ldots,d$, we have
\begin{align}\label{eqn_isl_cor_gam1}
|\gamma_{j,i}^{-1} - 1| = | (1 - \epsilon_{j,i}^{(\infty)})^{-1} - 1| \le \sum_{\kappa=1}^\infty |\epsilon_{j,i}^{(\infty)}|^\kappa \le \sum_{\kappa=1}^\infty (2 \chi_i \|\forcebold{\nabla}_i\| )^\kappa \le  4 \|\forcebold{\nabla}\| \max_{1 \le i \le r} \chi_i,
\end{align}
where in the second equality we used \refeqn{eqn_isl_proof_epsbound} and \refeqn{eqn_isl_proof_b3}, and where in the last step we used $2\chi_i \|\forcebold{\nabla}_i\| \le \tfrac{1}{2}$ which is true because of \refeqn{eqn_isl_proof_b3}. The case of $j=1$ is due to
\begin{align}
 |\gamma_{1,i}^{-1}-1| 
&= \Bigl| - 1 + \prod_{j=2}^d \bigl(1 - \epsilon_{j,i}^{(\infty)} \bigr) \Bigr|
= \Bigl| - 1 + 1 + \sum_{\kappa=1}^d \sum_{\substack{\|\forcebold{\ell}\|_1 = \kappa,\\ \forcebold{\ell} \le 1}} (-1)^\kappa \prod_{j=2}^d \bigl( \epsilon_{j,i}^{(\infty)} \bigr)^{\ell_j} \Bigr| \nonumber \\
&\le \sum_{\kappa=1}^\infty \sum_{\|\forcebold{\ell}\|_1 = \kappa} \prod_{j=2}^d |\epsilon_{j,i}^{(\infty)}|^{\ell_j} 
\le \sum_{\kappa=1}^\infty \bigl( (d-1) C_i' \bigr)^\kappa 
= \frac{(d-1) C_i'}{1 - (d-1)C_i'} 
\le  4 (d-1) \|\forcebold{\nabla}\| \max_{1\le i\le r} \chi_i;
\label{eqn_isl_cor_gam2}
\end{align}
herein, the inequality $\forcebold{\ell}\le1$ is meant componentwise, the second inequality is by \refeqn{eqn_isl_proof_epssumbound}, and the last step used \refeqn{eqn_isl_proof_epsbound}, \refeqn{eqn_isl_proof_b0} and \refeqn{eqn_isl_proof_b3}. Letting $C = 4(d-1) \max_{1\le i\le r} \chi_i$ then concludes the proof. 
\end{proof}

%
%
%
%
%
%
%
%
%

%
%
%
%
%
%
%
\section{A condition number} \label{sec_condition}
The main result of this paper states that the least nonzero singular value of Terracini's matrix is an absolute condition number with respect to the distance measure in \refdef{def_distance_measure}.

\begin{theorem}[Absolute condition number] \label{thm_condition}
Let $N = r(\Sigma+1)$. Let 
\(
b_i = (\sten{a}{i}{1}, \sten{a}{i}{2}, \ldots, \sten{a}{i}{d})
\)
be representatives of $p_i = \operatorname{Seg}(b_i) \in \Var{S}_\F$, $i=1,\ldots,r$. Let $\vect{p} = \operatorname{vecr}(b_1,b_2,\ldots,b_r)$ be given.
Assume that 
\[
\tensor{A} = f(\vect{p}) = \sum_{i=1}^r \sten{a}{i}{1} \otimes \cdots \otimes \sten{a}{i}{d}  \quad\in \SecZ{r}{\Var{S}_\F} \subset \F^{\Pi} 
\]
is robustly $r$-identifiable, and let $\varsigma_N$ denote the $N$th largest singular value of Terracini's matrix $T_\vect{p}$. Then, the absolute condition number of the rank decomposition problem at $\vect{p}$ is 
\begin{align*}
\kappa_A = \lim_{\epsilon\to0} \max_{\substack{\|\Delta\tensor{A}\|\le\epsilon,\\\tensor{A}+\Delta\tensor{A}\in\SecZ{r}{\Var{S}_\F}}} \frac{d( \vect{p}, f^{\dagger}(\tensor{A}+\Delta\tensor{A}))}{\|\Delta\tensor{A}\|} = \varsigma_N^{-1}.
\end{align*}
If $\varsigma_N = 0$ or $\tensor{A}$ is not robustly $r$-identifiable, then the condition number is defined to be $\infty$. 
\end{theorem}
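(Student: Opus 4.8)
The plan is to prove the two inequalities $\kappa_A \le \varsigma_N^{-1}$ and $\kappa_A \ge \varsigma_N^{-1}$ separately, after first reducing the analysis to a convenient affine slice through $\vect{p}$. Since $\tensor{A}$ is robustly $r$-identifiable it is a regular point of $\SecZ{r}{\Var{S}_\F}$, so $\cspan(T_\vect{p}) = \Tang{\tensor{A}}{\SecZ{r}{\Var{S}_\F}}$; combined with $\varsigma_N > 0$ this forces $\operatorname{rank} T_\vect{p} = N$, hence $\ker T_\vect{p} = \cspan(K)$ with $K$ as in \refeqn{eqn_terracini_kernel}, and $\SecZ{r}{\Var{S}_\F}$ is, near $\tensor{A}$, an $N$-dimensional Nash manifold. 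First I would study the restriction $\widehat{f} = f|_{\vect{p}+\cspan(K)^\perp}$, whose derivative at $\vect{p}$ is $T_\vect{p}|_{\cspan(K)^\perp}$; this derivative has trivial kernel and smallest singular value exactly $\varsigma_N$, so the (Nash) inverse function theorem makes $\widehat{f}$ a diffeomorphism from a neighbourhood of $\vect{p}$ in $\vect{p}+\cspan(K)^\perp$ onto a neighbourhood $\Var{U}$ of $\tensor{A}$ in $\SecZ{r}{\Var{S}_\F}$, with inverse $g:\Var{U}\to\vect{p}+\cspan(K)^\perp$ whose derivative $Dg(\tensor{A})$ is the restriction of $T_\vect{p}^\dagger$ to $\Tang{\tensor{A}}{\SecZ{r}{\Var{S}_\F}}$, so that $\|Dg(\tensor{A})\|_2 = \|T_\vect{p}^\dagger\|_2 = \varsigma_N^{-1}$.

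For the upper bound, take $\Delta\tensor{A}$ small with $\tensor{A}+\Delta\tensor{A}\in\SecZ{r}{\Var{S}_\F}$; robust $r$-identifiability forces $\tensor{A}+\Delta\tensor{A}\in\Var{U}$, and $\vect{q} := g(\tensor{A}+\Delta\tensor{A})$ lies in $f^\dagger(\tensor{A}+\Delta\tensor{A})$, whence $d(\vect{p}, f^\dagger(\tensor{A}+\Delta\tensor{A})) \le \|\vect{p}-\vect{q}\| = \|g(\tensor{A})-g(\tensor{A}+\Delta\tensor{A})\| \le \varsigma_N^{-1}\|\Delta\tensor{A}\| + o(\|\Delta\tensor{A}\|)$, using differentiability of $g$ and the fact that $\Delta\tensor{A}$ differs from a tangent vector of $\SecZ{r}{\Var{S}_\F}$ by $O(\|\Delta\tensor{A}\|^2)$; letting $\epsilon\to 0$ gives $\kappa_A\le\varsigma_N^{-1}$.

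For the lower bound, let $\vect{v}\in\cspan(K)^\perp$ be a unit right singular vector of $T_\vect{p}$ belonging to $\varsigma_N$, so $T_\vect{p}\vect{v} = \varsigma_N\vect{u}$ with $\|\vect{u}\|=1$, and set $\vect{q}_\epsilon = \vect{p}+\epsilon\vect{v}$, $\Delta\tensor{A}_\epsilon = f(\vect{q}_\epsilon) - \tensor{A}$. Then $\tensor{A}+\Delta\tensor{A}_\epsilon\in\SecZ{r}{\Var{S}_\F}$, $\|\Delta\tensor{A}_\epsilon\| = \varsigma_N\epsilon + O(\epsilon^2)$, and for $\epsilon$ small $\tensor{A}+\Delta\tensor{A}_\epsilon$ lies in the robust-identifiability neighbourhood of $\tensor{A}$, so $f^\dagger(\tensor{A}+\Delta\tensor{A}_\epsilon) = \Var{T}\vect{q}_\epsilon$ and $d(\vect{p}, f^\dagger(\tensor{A}+\Delta\tensor{A}_\epsilon)) = \min_{T\in\Var{T}}\|\vect{p}-T\vect{q}_\epsilon\|$. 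I claim this minimum is $\epsilon + O(\epsilon^2)$. A minimizer $T_\epsilon$ exists by \refprop{prop_distance_measure} with objective value at most $\epsilon$ (the value at $T=I$); the coercivity estimate in its proof, together with the triviality of the stabilizer of $\vect{p}$ in $\Var{T}$ (every factor of every $p_i$ is nonzero) and the fact that a nontrivial permutation would incur $\Omega(1)$ cost since the $p_i$ are pairwise distinct rank-$1$ tensors, forces $T_\epsilon\to I$. Writing $T_\epsilon\vect{p} = \vect{p} + \vect{k}_\epsilon + O(\|\vect{k}_\epsilon\|^2)$ with $\vect{k}_\epsilon\in\cspan(K) = \Tang{\vect{p}}{\Var{T}\vect{p}}$ and $T_\epsilon\vect{v} = \vect{v} + O(\|\vect{k}_\epsilon\|)$, orthogonality of $\vect{v}$ to $\vect{k}_\epsilon$ turns $\|\vect{p}-T_\epsilon\vect{q}_\epsilon\|\le\epsilon$ into $\|\vect{k}_\epsilon\| = O(\epsilon^2)$, whence $\vect{p}-T_\epsilon\vect{q}_\epsilon = -\epsilon\vect{v}+O(\epsilon^2)$. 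Dividing $\epsilon + O(\epsilon^2)$ by $\|\Delta\tensor{A}_\epsilon\|$ and letting $\epsilon\to 0$ yields $\kappa_A\ge\varsigma_N^{-1}$, so $\kappa_A = \varsigma_N^{-1}$; the case $\varsigma_N = 0$ or $\tensor{A}$ not robustly $r$-identifiable holds by the stipulated convention.

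The subtle point throughout is that $\cspan(K) = \ker T_\vect{p}$, which originates from $\Var{T}$, must be shown to be \emph{irrelevant} for $d$: a priori $f^\dagger(\tensor{A}+\Delta\tensor{A})$ need not sit near the slice $\vect{p}+\cspan(K)^\perp$, and the minimization defining $d$ runs over the non-compact group $\Var{T}$. This is exactly the function of \reflem{lem_isl} and \refcor{cor_isl_terracini}: any representative obtained from $\vect{p}$ by a perturbation landing in $\ker T_\vect{p}$ can be rewritten as an element of $\Var{T}\vect{p}$ plus a correction in $\cspan(K)^\perp$ of size $O(\|\forcebold{\nabla}\|^2)$, while Terracini's matrix at the rescaled representative differs from $T_\vect{p}$ only by right multiplication with an identity-plus-$O(\|\forcebold{\nabla}\|)$ diagonal, so that the whole fiber, measured through $d$, is governed by $T_\vect{p}^\dagger\Delta\tensor{A}$ up to second order. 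I expect the main obstacle to be precisely the uniform bookkeeping of these second-order corrections — combining the $O(\|\forcebold{\nabla}\|^2)$ bound of \reflem{lem_isl}, the estimate $\|\dot{D}-I\|_2 = O(\|\forcebold{\nabla}\|)$ of \refcor{cor_isl_terracini}, and the coercivity of \refprop{prop_distance_measure} — so that they provably vanish into the $o(\cdot)$ terms of both inequalities; over $\F = \R$ one must additionally invoke the local structure of $\SecZ{r}{\Var{S}_\F}$ at the regular point $\tensor{A}$ only through its Nash-manifold chart.
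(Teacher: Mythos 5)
Your proposal is correct in substance, but it follows a genuinely different and leaner route than the paper. The paper never works with a section of $f$: it analyzes the \emph{actual} minimizer $\widetilde{\Delta\vect{p}}$ of $\min_{S\in\Var{T}}\|\vect{p}-S(\vect{p}+\Delta\vect{p})\|$ for an arbitrary admissible $\Delta\tensor{A}$, which forces it to (I) prove continuity of this minimizer through an affine chart obtained by freezing $r(d-1)$ coordinates, (II) split $\widetilde{\Delta\vect{p}}$ into kernel and co-kernel parts and absorb the kernel part into a change of representative via \reflem{lem_isl} and \refcor{cor_isl_terracini}, sandwiching the ratio by Courant--Fischer, and (III) show attainment with the singular-vector perturbation, where the delicate point is a Sherman--Morrison computation proving that the optimal rescaling moves $\vect{p}$ only by $\mathcal{O}(n^{-2})$ \emph{orthogonally} to $\cspan(K)$, because Part II needs the direction of the corrected perturbation, not just its norm. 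You instead get the upper bound for free from the inf-structure of $d$: any fiber point is admissible, and the local inverse $g$ of $f$ restricted to the slice $\vect{p}+\cspan(K)^\perp$ (legitimate, since the regular/robustly identifiable hypothesis gives the local $N$-dimensional manifold structure, exactly as the paper itself assumes in its Part I) supplies one with $\|Dg(\tensor{A})\|_2=\varsigma_N^{-1}$; and your lower bound uses the same singular-vector perturbation as Part III but only needs norm estimates, which orthogonality of $\vect{v}$ to the orbit tangent space $\cspan(K)$ delivers directly once the optimal $T_\epsilon$ is shown to be a near-identity scaling. Consequently your argument does not actually need \reflem{lem_isl} or \refcor{cor_isl_terracini} at all, contrary to the suggestion in your last paragraph; what the paper's heavier route buys is quantitative, reusable information about the true minimizer (the explicit iterative rescaling, the $\|\dot{D}-I\|_2=\mathcal{O}(\|\forcebold{\nabla}\|)$ perturbation of Terracini's matrix) that yours does not produce. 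Two details you should still pin down: ruling out a nontrivial permutation in $T_\epsilon$ requires that no two rank-$1$ terms of $\tensor{A}$ coincide (which follows from $\tensor{A}$ having rank exactly $r$, not merely from ``pairwise distinct factor tuples''), and the $o(\|\Delta\tensor{A}\|)$ in your upper bound must be uniform over directions, which you get from $C^1$-continuity of $Dg$ together with the local equivalence of intrinsic and extrinsic distances on the embedded manifold; both are routine but worth stating.
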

\begin{proof}
Observe that $f$ is an analytic multivariate polynomial that is homogeneous of degree $d$ in the $M = r(\Sigma+d)$ variables $x_i$, so that it has a finite, convergent Taylor series expansion at every point. Let $\Delta\vect{p} \in \F^{M}$ be arbitrary. For $f_i(\vect{p}+\Delta\vect{p})$, where $f_i$ is the $i$th component of the vector function $f$, this expansion around $\vect{p}$ is given by
\begin{align*}
 f_i(\vect{p} + \Delta\vect{p}) 
&= f_i(\vect{p}) + \sum_{1 \le k_1 \le M} \Delta p_{k_1} \Bigl(\frac{\partial \, f_i}{\partial x_{k_1}}\Bigr)(\vect{p}) + \sum_{1 \le k_1 < k_2 \le M} \Delta p_{k_1} \Delta p_{k_2} \Bigl(\frac{\partial^2 \, f_i}{\partial x_{k_1} \partial x_{k_2}}\Bigr) (\vect{p}) + \cdots,
\end{align*}
where we exploited the observation that the terms corresponding to the higher-order partial derivatives with respect to the same variable are zero, so that we can get rid of the multinomial coefficients. 
Then,
\[
 f(\vect{p}+\Delta\vect{p}) = f(\vect{p}) + T_\vect{p} \Delta\vect{p} + \mathcal{O}(\|\Delta\vect{p}\|^2),
\]
where $T_\vect{p}$ is Terracini's matrix as in \refeqn{eqn_terracini}.

Let $\Var{N}$ be the neighborhood of $\tensor{A}$ where all tensors are robustly $r$-identifiable and hence of rank $r$. 
Let $\tensor{A}' \in \Var{N}$ be arbitrary. Then, there exist several $\Delta\vect{p}'$ such that $\tensor{A}' = \tensor{A} + \Delta\tensor{A} = f(\vect{p}+\Delta\vect{p}')$. The remainder of the proof will be slightly easier if we enforce one unique particular choice. Note that by $r$-identifiability, $f^\dagger(\tensor{A}') = \Var{T}(\vect{p} +\Delta\vect{p}')$ so that the set of rank-$1$ tensors appearing in the decomposition of $\tensor{A}'$ is unique. The $r$ elements of this set can be ordered uniquely with respect to the lexicographic total order $\le$ on $\F^\Pi$.\footnote{For $\F=\C\simeq\R^2$, one should consider the lexicographic order after the identification $\C^\Pi = \R^{2\Pi}$.} This removes the permutation ambiguity, leaving only the scaling indeterminacies. These can be removed as follows. Let $b_i' = (\sten{x}{i}{1}, \sten{x}{i}{2}, \ldots, \sten{x}{i}{d})$ be a set of representatives such that the corresponding rank-$1$ tensors $\Seg{b_i'}$ are sorted. Then, the following set is uniquely defined regardless of the particular choice of representatives for the rank-$1$ tensors:
\begin{align}\label{eqn_main_proof_def_C}
 \Var{C} = \biggl\{ j_{k,i} = \min \Bigl\{ \underset{1\le\ell\le n_k}{\arg\max} |x_{\ell,i}^{(k)}| \Bigr\} \;\big|\; k=2,3,\ldots,d, \;i=1,2,\ldots,r \biggr\},
\end{align}
where $x_{\ell,i}^{(k)}$ is the $\ell$th element of $\sten{x}{i}{k}\in\F^{n_k}$. Note that $|x_{j_{k,i},i}^{(k)}| = \|\sten{x}{i}{k}\|_\infty > 0$ by $r$-identifiability, and that $|\Var{C}|=r(d-1)$. It follows from $r$-identifiability that there is just one choice of vectorized factor matrices $\vect{p}+\Delta\vect{p} = \operatorname{vecr}(b_1'', b_2'', \ldots, b_r'')$ with 
\begin{align}\label{eqn_main_proof_normalization}
 b_i'' = (\sten{b}{i}{1}, \sten{b}{i}{2}, \ldots, \sten{b}{i}{d}), \quad
 \Seg{b_1''} < \Seg{b_2''} < \cdots < \Seg{b_r''}, 
 \quad\text{and}\quad 
 \forall j_{k,i} \in \Var{C}: b_{j_{k,i},i}^{(k)} = 1
\end{align}
such that $\tensor{A}' = f(\vect{p}+\Delta\vect{p})$, as the last set of equations completely fixes the scaling indeterminacies. Note that the order of the rank-$1$ tensors is strict, for otherwise the rank of $\tensor{A}'$ would be strictly less than $r$. Since $\vect{p}$ is fixed, $\Delta\vect{p}$ is uniquely determined by the foregoing procedure.

Write $\Delta\tensor{A} = f(\vect{p} + \Delta\vect{p}) - f(\vect{p})$. Since $r$-identifiability holds in $\Var{N}$---but only up to the action of $\Var{T}$---it follows that 
\(
f^{\dagger}(\tensor{A}') = \Var{T} (\vect{p} + \Delta\vect{p}).
\)
Choose any
\begin{align}\label{eqn_main_proof_many_scale}
 \widehat{S} \in \Var{Z} = \arg\min_{S \in \Var{T}} \| \vect{p} - S(\vect{p} + \Delta\vect{p}) \|;
\end{align}
note that $\widehat{S}$ depends on $\tensor{A}'$, however I will not make such dependencies explicit in the notation for the sake of brevity. 
We can write \(\widehat{S}( \vect{p} + \Delta\vect{p} ) = \vect{p} + \widetilde{\Delta\vect{p}},\) so that for every $\tensor{A}' \in \Var{N}$ the following holds:
\[
 \tensor{A}' = f(\vect{p} + \Delta\vect{p}) = f(\vect{p} + \widetilde{\Delta\vect{p}}).
\]
Notice that 
\[
 \|\widetilde{\Delta\vect{p}}\| = d(\vect{p}, f^{\dagger}(\tensor{A}+\Delta\tensor{A})),
\]
so we can just analyze this norm. It is important to note that while $\widetilde{\Delta\vect{p}}$ depends on the choice of $\widehat{S}$, the norm of this vector \emph{is independent of} this choice.

In the remainder of the proof, most considered quantities, such as $\widetilde{\Delta\vect{p}}$, depend on $\Delta\tensor{A}$, which is arbitrary, and the choice of $\widehat{S}$ (which, in turn, depends on $\Delta\tensor{A}$). To be very precise, one could indicate these dependencies in the notation by writing, for example, $\widetilde{\Delta\vect{p}}( \Delta\tensor{A}, \widehat{S}(\Delta\tensor{A}) )$, but I will refrain from doing so wherever no confusion may arise for avoiding the obvious notational burden. 

\paragraph{\bf Part I: Continuity}
First, we prove that 
\begin{align} \label{eqn_main_proof_converges}
 \lim_{\substack{\|\Delta\tensor{A}\| \to 0,\\\tensor{A}+\Delta\tensor{A}\in\SecZ{r}{\Var{S}_\F}}} \| \widetilde{\Delta\vect{p}}(\Delta\tensor{A}) \| \to 0
\end{align}
by eliminating $r(d-1)$ parameters as follows. 
Let $\vect{x} = \operatorname{vecr}(\chi_1, \chi_2, \ldots, \chi_r) \in \F^{r(\Sigma+d)}$ be variables, where $\chi_i = (\sten{x}{i}{1}, \sten{x}{i}{2}, \ldots, \sten{x}{i}{d})$. We will represent the tensor $\tensor{X} = f(\vect{x})$ using the expected number of parameters $N$---provided that $\tensor{X} \in \Var{N}$ is close to $\tensor{A}$---by choosing vectorized factor matrices that lie in the subspace\footnote{This can be thought of as a normalized representation of the vectorized factor matrices.} of $\F^{r(\Sigma+d)}$ given by the system of $r(d-1)$ equations
\begin{align*} 
U_{\Var{C},\vect{p}} = 
 \bigl\{
 x_{j_{k,i},i}^{(k)} = a_{j_{k,i},i}^{(k)} \;\;|\;\; j_{k,i} \in \Var{C} 
 \bigr\},
\end{align*}
where $\Var{C}$ is as in \refeqn{eqn_main_proof_def_C}, and $x_{\ell,i}^{(k)}$ and $a_{\ell,i}^{(k)}$ denote the $\ell$th element of $\sten{x}{i}{k}$ and $\sten{a}{i}{k}$ respectively. The change of variables 
\[
\eta: \vect{x} \mapsto \vect{\dot{x}} = \operatorname{vecr}(\dot{\chi}_1,\dot{\chi}_2,\ldots,\dot{\chi}_r),
\] 
where $\dot{\chi}_i = (\sten{\dot{x}}{i}{1}, \sten{\dot{x}}{i}{2}, \ldots, \sten{\dot{x}}{i}{d})$, $\sten{\dot{x}}{i}{k} = \left[\begin{smallmatrix} \dot{x}_{1,i}^{(k)} & \cdots & \dot{x}_{n_k,i}^{(k)} \end{smallmatrix}\right]^T$, and
\begin{subequations} \label{eqn_main_proof_variables}
\begin{align}
\dot{x}_{\ell,i}^{(1)} &= x_{\ell,i}^{(1)} \prod_{k=2}^d \frac{x_{j_{k,i},i}^{(k)}}{a_{j_{k,i},i}^{(k)}},  &&\ell = 1,2,\ldots,n_1,\; i=1,2,\ldots,r, \\
\dot{x}_{\ell,i}^{(k)} &= a_{j_{k,i},i}^{(k)} \frac{ x_{\ell,i}^{(k)} }{ x_{j_{k,i},i}^{(k)} }, &&\ell=1,2,\ldots,n_k,\; i=1,2,\ldots,r,\; k=2,3,\ldots,d,
\end{align}
\end{subequations}
allows one to represent every tensor $f(\vect{x})$ with $x_{j_{k,i},i}^{(k)} \ne 0$, $j_{k,i}\in\Var{C}$ on $U_{\Var{C},\vect{p}}$ as  $\vect{\dot{x}}$. 
In particular, we may assume that all tensors $\tensor{A}' \in \Var{N}$ can be represented on $U_{\Var{C},\vect{p}}$. Otherwise, one should shrink the open neighborhood $\Var{N}$ until $\Var{N} \subset f(U_{\Var{C},\vect{p}})$ is satisfied; this is always possible due to the particular choice of $\Var{C}$ which ensures that $|a_{j_{k,i},i}^{(k)}| = \|\sten{a}{i}{k}\|_\infty > 0$ if $j_{k,i} \in \Var{C}$---the $\infty$-norm is nonzero because $\tensor{A}$ is robustly $r$-identifiable and hence of rank $r$.
Note that the definition of $\eta$ is such that it simply chooses an alternative representative for each of the rank-$1$ tensors $\chi_i$, so that 
\[
 \exists \overline{S} \in \Var{T}: \eta(\vect{x}) = \overline{S} \vect{x};
\]
note that $\overline{S}$ depends on $\vect{x}$, but I will not indicate this in the notation.
Define for $k=2,3,\ldots,d$ and $i=1,2,\ldots,r$, the functions
\begin{align*}
&\pi_{j_{k,i},\tensor{A}} : \F^{n_k} \to \F^{n_k-1},\; 
\vect{z} \mapsto 
\begin{bmatrix}
z_1 &
\cdots &
z_{j_{k,i}-1} &
z_{j_{k,i}+1} &
\cdots &
z_{n_k}
\end{bmatrix}^T, \\
&\pi_{j_{k,i},\tensor{A}}^{-1} : \F^{n_k-1} \to \F^{n_k},\;
\vect{z} \mapsto
 \begin{bmatrix}
z_{1} &
\cdots &
z_{j_{k,i}-1} &
a_{j_{k,i}}^{(k)} &
z_{j_{k,i}} &
\cdots &
z_{n_k-1}
\end{bmatrix}^T,
\end{align*}
where $j_{k,i} \in {\Var{C}}$. 
Define also
\begin{align*}
\pi_{\tensor{A}} : \F^{M} &\to \F^{N} 
&\pi_{\tensor{A}}^{-1} : \F^{N} &\to \F^{M} \\
\begin{bmatrix} 
\sten{z}{i}{1} \\ 
\sten{z}{i}{2} \\ 
\vdots \\ 
\sten{z}{i}{d} 
\end{bmatrix}_{i=1,\ldots,r} 
&\mapsto 
\begin{bmatrix} 
\sten{z}{i}{1} \\ 
\pi_{j_{2,i},\tensor{A}}( \sten{z}{i}{2} ) \\ 
\vdots \\ 
\pi_{j_{d,i},\tensor{A}}( \sten{z}{i}{d} ) 
\end{bmatrix}_{i=1,\ldots,r},
\text{ and }
&
\begin{bmatrix} 
\sten{z}{i}{1} \\ 
\sten{z}{i}{2} \\ 
\vdots \\ 
\sten{z}{i}{d} 
\end{bmatrix}_{i=1,\ldots,r} 
&\mapsto 
\begin{bmatrix} 
\sten{z}{i}{1} \\ 
\pi_{j_{2,i},\tensor{A}}^{-1}( \sten{z}{i}{2} ) \\ 
\vdots \\ 
\pi_{j_{d,i},\tensor{A}}^{-1}( \sten{z}{i}{d} ) 
\end{bmatrix}_{i=1,\ldots,r}.
\end{align*}
With the definition of these analytic functions in place, we define 
\begin{align*}
 f_{\tensor{A}} : \F^{N} &\to \F^{\Pi} \\ 
\vect{z} &\mapsto (f \circ \pi_{\tensor{A}}^{-1})( \vect{z} ).
\end{align*}
Now it follows that for an arbitrary choice of $\tensor{A}' = f(\vect{p} + \Delta\vect{p}) \in \Var{N}$ there exists an $\overline{S} \in \Var{T}$ such that 
\[
 \tensor{A}' = \tensor{A} + \Delta\tensor{A} = f(\vect{p}+\Delta\vect{p}) = f_\tensor{A}( \pi_\tensor{A}( \eta(\vect{p}+\Delta\vect{p}) ) ) = f_\tensor{A}\bigl( \pi_\tensor{A} \bigl( \overline{S} (\vect{p}+\Delta\vect{p}) \bigr) \bigr).
\] 
Let $\overline{\vect{p}} = \pi_\tensor{A}( \eta(\vect{p}) ) = \pi_\tensor{A}( \vect{p} )$ and $\overline{\Delta\vect{p}} = \pi_\tensor{A}(\overline{S}(\vect{p}+\Delta\vect{p})) - \overline{\vect{p}}$.
As ${f}_\tensor{A}$ is an analytic function over $\F$ in $N$ variables, it is continuously differentiable with Jacobian $\overline{J}$ at $\overline{\vect{p}}$.
Its rank is the maximum value $N$. This can be understood by observing 
that the Jacobian of $f \circ \pi_{\tensor{A}}^{-1}$ is---by the chain rule---given by
\[
 \overline{J} = J_{f \circ \pi_{\tensor{A}}^{-1}} (\overline{\vect{p}}) 
 = J_f( \pi_{\tensor{A}}^{-1}(\overline{\vect{p}}) ) J_{\pi_{\tensor{A}}^{-1}} (\overline{\vect{p}}) 
 = J_f( \vect{p} ) \widehat{I} = T_\vect{p} \widehat{I},
\]
where $\widehat{I} \in \R^{r(\Sigma+d) \times r(\Sigma+1)}$ is an identity matrix from which certain columns have been removed.
The implication is that the Jacobian $\overline{J}$ is formed by a subset of the columns of $f$'s Jacobian, i.e., Terracini's matrix $T_\vect{p}$. In fact, from the definition of $\pi_\tensor{A}^{-1}$ one verifies that $\overline{J}$ lacks the columns 
\begin{align}\label{eqn_main_proof_missing}
 \sten{a}{i}{1} \otimes \cdots \otimes \sten{a}{i}{k-1} \otimes \vect{e}_{j_{k,i}} \otimes \sten{a}{i}{k+1} \otimes \cdots \otimes \sten{a}{i}{d}, \quad j_{k,i} \in \Var{C},
\end{align}
relative to $T_\vect{p}$. I claim that these columns in \refeqn{eqn_main_proof_missing} are linearly dependent on the other columns of $T_\vect{p}$. Indeed, fix $k=2,3,\ldots,d$ and $i=1,2,\ldots,r$, and then 
\begin{align*}
T_\vect{p} (\vect{k}_i^{k} \otimes \vect{e}_i)
= 
\sum_{j=1}^{n_1} a_{j,i}^{(1)} \vect{e}_j \otimes \sten{a}{i}{2}\otimes\cdots\otimes\sten{a}{i}{d} - \sum_{j=1}^{n_k} a_{j,i}^{(k)} \sten{a}{i}{1} \otimes \cdots \otimes \sten{a}{i}{k-1} \otimes \vect{e}_{j} \otimes \sten{a}{i}{k+1} \otimes \cdots \otimes \sten{a}{i}{d} = 0.
\end{align*}
As $a_{j_{k,i}}^{(k)} \ne 0$, $j_{k,i}\in\Var{C}$, by construction of $\Var{C}$, it follows that \refeqn{eqn_main_proof_missing} is linearly dependent on the other columns in the above expression, and may hence be removed from $T_\vect{p}$ without affecting its range. By inspection it is clear that this procedure drops exactly $r(d-1)$ linearly independent columns of $T_\vect{p}$, resulting in $\overline{J}$ whose column span coincides with the column span of the original Terracini's matrix. As the latter had dimension $N$,\footnote{The condition number is defined to be $\infty$ if this rank condition is not satisfied.} it follows that $\overline{J} \in \F^{\Pi \times N}$ has linearly independent columns. 

I claim that $f_\tensor{A}$ is a local diffeomorphism. By our assumption on robust $r$-identifiability, $\Var{N}$ is an open neighborhood of a smooth manifold of dimension $N$. Hence, there exists a chart $(U,\phi^{-1})$ where $U \subset \Var{N}$ is an open neighborhood of $\tensor{A}$ and a local diffeomorphism $\phi^{-1}$ between $U$ and an open neighborhood $V = \phi^{-1}(U) \subset \F^N$. Restrict $\Var{N}$ to the open neighborhood $U \subset \Var{N}$. Let $\tensor{A} = \phi(\vect{q}) \in \Var{N}$. Since $\phi$ is a local diffeomorphism, $\phi^{-1} \circ \phi|_V = \operatorname{Id}_{\F^N}$. Hence, the Jacobian matrix at $\vect{q} \in V$ of the composition $\phi^{-1} \circ \phi|_V$, namely $J_{\phi^{-1}}(\phi(\vect{q})) J_\phi(\vect{q})$ by the chain rule, is the $N \times N$ identity matrix. 
Since $J_\phi(\vect{q})\in\F^{\Pi \times N}$ and $J_{\phi^{-1}}(\phi(\vect{q})) \in \F^{N \times \Pi}$ with $\Pi \ge N$, it follows that both matrices are of maximal rank $N$.
By definition, $J_\phi(\vect{q})$ is contained in the tangent space $\Tang{\tensor{A}}{\Sec{r}{\Var{S}_\F}}$. By our assumption on robust $r$-identifiability, $\cspan(\overline{J}) = \cspan(T_\vect{p}) = \Tang{\tensor{A}}{\Sec{r}{\Var{S}_\F}}$. Since $\dim \cspan\bigl( J_\phi(\vect{q}) \bigr) = N = \dim \cspan(\overline{J})$ it follows that $J_\phi(\vect{q}) Z = \overline{J}$ for some nonsingular matrix $Z \in \F^{N \times N}$. Hence, the Jacobian matrix of the composition $g : \phi^{-1} \circ f_\tensor{A}$ at $\overline{\vect{p}}$ is given by $J_{\phi^{-1}}(f_\tensor{A}(\overline{\vect{p}})) \overline{J} = J_{\phi^{-1}}(\phi(\vect{q})) J_\phi(\vect{q}) Z = Z$, where the first equality is because $\tensor{A} = f_\tensor{A}(\overline{\vect{p}}) = \phi(\vect{q})$. Now, the smooth function $g : \F^N \to \F^N$ has nonsingular Jacobian matrix $Z$, hence by the Inverse Function Theorem \cite[Theorem 1A.1]{DR2009} it has a smooth inverse function $g^{-1}$. Thus, $g$ is a local diffeomorphism between a neighborhood of $\vect{p} \in \F^N$ and a neighborhood of $\vect{q} \in \F^N$. Letting now $f_\tensor{A}^{-1}$ be the composition $g^{-1} \circ \phi^{-1}$, which is a composition of local diffeomorphisms, it follows that $f_\tensor{A}$ is a local diffeomorphism between an open neighborhood of $\vect{p} \in \F^N$ and an open neighborhood of $\tensor{A} \in \Var{N}$. Further restrict $\Var{N}$ to the neighborhood where this local diffeomorphism is defined. 

As $f_\tensor{A}^{-1}$ is locally smooth, it follows immediately that $\Delta\tensor{A} \to 0$ implies $\overline{\Delta\vect{p}} \to 0$. Both $\vect{p}$ and $\overline{S}(\vect{p} + \Delta\vect{p})$ live in $U_{\Var{C},\vect{p}}$, so that their values at all positions $j_{k,i} \in \Var{C}$ agree. As $\pi_\tensor{A}^{-1}(\vect{x})$ is analytic, it follows that $\overline{\Delta\vect{p}} \to 0$ implies $\overline{S}(\vect{p} + \Delta\vect{p}) - \vect{p} \to 0$ as well. Since 
\[
 \|\widetilde{\Delta\vect{p}}\| = \min_{S \in \Var{T}}\| \vect{p} - S(\vect{p} + \Delta\vect{p}) \| \le \| \vect{p} - \overline{S}(\vect{p} + \Delta\vect{p}) \|,
\]
equation \refeqn{eqn_main_proof_converges} follows.

\newcommand{\maxitout}{\max_{\Delta\tensor{A} \in \Var{G}_\epsilon}}
\paragraph{\bf Part II: Sandwiching}
For convenience, let us denote the set 
\[
\Var{G}_\epsilon = \{ \Delta\tensor{A} \in \F^\Pi\setminus\{0\} \;|\; \|\Delta\tensor{A}\|\le \epsilon \text{ and } \tensor{A}+\Delta\tensor{A}\in\SecZ{r}{\Var{S}_\F} \}.
\]
Then, the second part consists of demonstrating that 
\[
 \lim_{\epsilon\to0} \maxitout \frac{\|\widetilde{\Delta\vect{p}}(\Delta\tensor{A})\|}{\|\Delta\tensor{A}\|} = \lim_{\epsilon\to0} \maxitout \frac{\|T_\vect{p} \vect{x}(\Delta\tensor{A})\|}{\|\vect{x}(\Delta\tensor{A})\|} = \bigl(\varsigma_N(T_\vect{p})\bigr)^{-1}
\]
whereby $\vect{x}(\Delta\tensor{A}) \in \F^{r(\Sigma+d)}$. The key consists of exploiting the Iterated Scaling Lemma for transforming any $\widetilde{\Delta\vect{p}}$ into a new vector $\vect{x}(\Delta\tensor{A})$ that is orthogonal to $T_\vect{p}$'s kernel.

Since Terracini's matrix $T_\vect{p}$ has a kernel $\Var{K} = \cspan(K)$ of dimension exactly equal to $r(d-1)$, we may factorize $\widetilde{\Delta\vect{p}}$ as
\[
 \widetilde{\Delta\vect{p}} = \forcebold{\Delta} + \forcebold{\nabla}, \quad\text{where } \vect{\Delta} \in \Var{K}^\perp \text{ and } \forcebold{\nabla} \in \Var{K}.
\]
Notice that $\langle \forcebold{\nabla}, \vect{\Delta} \rangle = 0$, so that $\|\forcebold{\nabla}\| \le \|\widetilde{\Delta\vect{p}}\|$ and $\|\vect{\Delta}\| \le \|\widetilde{\Delta\vect{p}}\|$.
From \reflem{lem_isl} it follows that for sufficiently small $\|\Delta\tensor{A}\| > 0$, and, hence, for sufficiently small $\| \widetilde{\Delta\vect{p}} \| > 0$, we have that
\[
 \vect{p} + \forcebold{\nabla} = \dot{\vect{p}} + \vect{\Delta}' 
 \quad\text{with } 
 \dot{\vect{p}} \sim \vect{p},\; \vect{\Delta}' \in \Var{K}^\perp,\; \text{and}\; \|\vect{\Delta}'\| \le \|\forcebold{\nabla}\|.
\]
Let $\dot{\vect{\Delta}} = \vect{\Delta}' + \vect{\Delta} \in \Var{K}^\perp$.
As particular consequences, we have that
$f(\vect{p}) = f(\dot{\vect{p}})$ and 
$f(\vect{p} + \widetilde{\Delta\vect{p}}) = f(\dot{\vect{p}} + \dot{\vect{\Delta}})$.
The Taylor series expansion about the vectorized factor matrices $\dot{\vect{p}}$ is 
\begin{align*}
f(\vect{p} + \widetilde{\Delta\vect{p}})
= f(\dot{\vect{p}} + \dot{\vect{\Delta}}) 
= f(\dot{\vect{p}}) + T_{\dot{\vect{p}}} \dot{\vect{\Delta}} + \mathcal{O}(\| \dot{\vect{\Delta}} \|^2 ) 
= f(\vect{p}) + T_{\dot{\vect{p}}} \dot{\vect{\Delta}} + \mathcal{O}(\| \dot{\vect{\Delta}} \|^2 ) ;
\end{align*}
thus, there exist a constant $C > 0$ such that for all sufficiently small $\widetilde{\Delta\vect{p}}$ it holds that 
\[
\| T_{\dot{\vect{p}}} \dot{\vect{\Delta}} \| - C \| \dot{\vect{\Delta}} \|^2 \le \| f(\vect{p} + \widetilde{\Delta\vect{p}}) - f(\vect{p}) \| \le \| T_{\dot{\vect{p}}} \dot{\vect{\Delta}}\| + C \| \dot{\vect{\Delta}} \|^2.
\]
It follows from the Iterated Scaling Lemma that if $f(\vect{p} + \widetilde{\Delta\vect{p}}) \ne f(\vect{p})$ then $\|\dot{\vect{\Delta}}\| \ne 0$.\footnote{At this point where we invoke separability, the argument for the premetric $\widehat{d}(\cdot,\cdot)$ fails.} Indeed, the contrapositive $\|\dot{\vect{\Delta}}\| = 0$ yields
\(\dot{\vect{\Delta}} = \vect{p} + \widetilde{\Delta\vect{p}} - \dot{\vect{p}} = 0\), so that $f(\vect{p} + \widetilde{\Delta\vect{p}}) = f(\dot{\vect{p}}) = f(\vect{p}).$
Thus, if $\|\Delta\tensor{A}\| > 0$ then $\|\dot{\vect{\Delta}}\| > 0$, so that we may write
\begin{align*} 
-C \| \dot{\vect{\Delta}} \| \le \frac{\| f(\vect{p}+\widetilde{\Delta\vect{p}}) - f(\vect{p}) \|}{ \| \dot{\vect{\Delta}} \| } - \frac{ \| T_{\dot{\vect{p}}} \dot{\vect{\Delta}} \| }{ \| \dot{\vect{\Delta}} \| } \le C \| \dot{\vect{\Delta}} \|.
\end{align*}
Hence, for sufficiently small $\epsilon$ and $\|\dot{\vect{\Delta}}\|$, we obtain the useful bound
\begin{align} \label{eqn_main_proof_seq1}
\Biggl| \frac{\|\Delta\tensor{A}\|}{\|\dot{\vect{\Delta}}\|} - \frac{\|T_{\dot{\vect{p}}} \dot{\vect{\Delta}}\|}{\|\dot{\vect{\Delta}}\|} \Biggr| 
\le C \cdot \|\dot{\vect{\Delta}}\| 
\le C \cdot \max_{\Delta\tensor{A} \in \Var{G}_\epsilon} \max_{\widehat{S}(\Delta\tensor{A})\in\Var{Z}} \|\dot{\vect{\Delta}}(\Delta\tensor{A},\widehat{S}(\Delta\tensor{A}))\|.
\end{align}
Notice that all of the dotted vectors ultimately depend on $\Delta\tensor{A}$.
Consider now
\[
 \frac{\| T_\vect{p} \dot{\vect{\Delta}}\|}{\|\dot{\vect{\Delta}}\|} - \frac{\|T_\vect{p} \dot{E} \dot{\vect{\Delta}}\|}{\|\dot{\vect{\Delta}}\|}
 \le \frac{\|T_{\dot{\vect{p}}} \dot{\vect{\Delta}}\|}{\|\dot{\vect{\Delta}}\|} = \frac{\| T_\vect{p} \dot{\vect{\Delta}} + T_\vect{p} \dot{E} \dot{\vect{\Delta}}\|}{\|\dot{\vect{\Delta}}\|}
 \le \frac{\| T_\vect{p} \dot{\vect{\Delta}}\|}{\|\dot{\vect{\Delta}}\|} + \frac{\|T_\vect{p} \dot{E} \dot{\vect{\Delta}}\|}{\|\dot{\vect{\Delta}}\|},
\]
where $\dot{E}$ is as in \refcor{cor_isl_terracini}. Then, we similarly have for sufficiently small $\epsilon$ that
\begin{align}\label{eqn_main_proof_seq2}
\Biggl| \frac{\|T_{\dot{\vect{p}}} \dot{\vect{\Delta}}\|}{\|\dot{\vect{\Delta}}\|} - \frac{\| T_\vect{p} \dot{\vect{\Delta}}\|}{\|\dot{\vect{\Delta}}\|} \Biggr|
\le \frac{\|T_\vect{p} \dot{E} \dot{\vect{\Delta}}\|}{\|\dot{\vect{\Delta}}\|} 
\le \|T_\vect{p}\|_2 \|\dot{E}\|_2
 \le \|T_\vect{p}\|_2 \max_{\Delta\tensor{A} \in \Var{G}_\epsilon} \max_{\widehat{S}(\Delta\tensor{A})\in\Var{Z}} \|\dot{E}(\Delta\tensor{A},\widehat{S}(\Delta\tensor{A}))\|_2.
\end{align}
Combining \refeqn{eqn_main_proof_seq1} and \refeqn{eqn_main_proof_seq2} allows us to infer that
\begin{subequations}\label{eqn_main_proof_seq3}
\begin{align}
 \frac{\|\Delta\tensor{A}\|}{\|\dot{\vect{\Delta}}\|} = \frac{\| T_\vect{p} \dot{\vect{\Delta}}\|}{\|\dot{\vect{\Delta}}\|} + \varepsilon,
\end{align}
where
\begin{align}
|\varepsilon| \le \vartheta = \bigl( C \cdot \max_{\Delta\tensor{A}\in\Var{G}_\epsilon} \max_{\widehat{S}(\Delta\tensor{A})\in\Var{Z}} \|\dot{\vect{\Delta}}(\Delta\tensor{A},\widehat{S}(\Delta\tensor{A}))\| \bigr) + \bigl( \|T_\vect{p}\|_2  \cdot \max_{\Delta\tensor{A}\in\Var{G}_\epsilon} \max_{\widehat{S}(\Delta\tensor{A})\in\Var{Z}} \|\dot{E}(\Delta\tensor{A}, \widehat{S}(\Delta\tensor{A}))\|_2 \bigr). 
\end{align}
\end{subequations}
It remains to relate this expression to $\|\widetilde{\Delta\vect{p}}\|/\|\Delta\tensor{A}\|$. 

An upper bound on $\|\widetilde{\Delta\vect{p}}\|$ in terms of $\dot{\vect{\Delta}}$ is obtained by observing that 
\[
 \widehat{S}(\vect{p} + \Delta\vect{p}) = \vect{p} + \widetilde{\Delta\vect{p}} = \dot{\vect{p}} + \dot{\vect{\Delta}} = S \vect{p} + \dot{\vect{\Delta}}
\]
for some invertible $S \in \Var{T}$. It follows that
\[
\|\widetilde{\Delta\vect{p}}\| = \min_{D\in\Var{T}} \| D(\vect{p}+\Delta\vect{p}) - \vect{p} \| \le \|S^{-1} \widehat{S} (\vect{p} + \Delta\vect{p}) - \vect{p}\| = \|S^{-1} \dot{\vect{\Delta}}\| \le \|S^{-1}\|_2 \|\dot{\vect{\Delta}}\|,
\]
where the first inequality arises because $\Var{T}$ is a multiplicative group. 
As $S^{-1}\in\Var{T}$ so $S\in\Var{T}$, we can write $S = PD$ where $P\in\Var{P}$ and $D\in\Var{B}$, so that $\|S^{-1}\|_2 = \|D^{-1}\|_2$.
It follows from the Iterated Scaling Lemma that the diagonal entries of $D$ are the $\gamma_{j,i}$'s in \refeqn{eqn_isl_cor_diagonal}. Then, it follows immediately from \refeqn{eqn_isl_cor_gam1} and \refeqn{eqn_isl_cor_gam2}
that the largest diagonal entry in absolute value of $D^{-1}$ is smaller than $1 + 4(d-1) \|\widetilde{\Delta\vect{p}}\| \cdot \max_i \chi_i = 1 + \delta$, where $\chi_i$ is as in \refeqn{eqn_isl_proof_chi}.
It follows from Part I that $\delta\to0$ as $\epsilon\to0$. 
A lower bound on $\|\widetilde{\Delta\vect{p}}\|$ in terms of $\dot{\vect{\Delta}}$ is obtained as follows. From the triangle inequality, $\|\dot{\vect{\Delta}}\| \le \|\vect{\Delta}'\| + \|\vect{\Delta}\|$ so that
\begin{align*}
 \|\dot{\vect{\Delta}}\|^2 
 &\le \|\vect{\Delta}'\|^2 + 2 \|\vect{\Delta}'\| \|\vect{\Delta}\| + \|\vect{\Delta}\|^2 \\
 &= \|\vect{\Delta}'\| (\|\vect{\Delta}'\| + 2 \|\vect{\Delta}\|) + \|\vect{\Delta}\|^2
 \le \|\forcebold{\nabla}\|^2 ( 4 \lambda^2 \|\forcebold{\nabla}\|^2 + 4 \lambda \|\vect{\Delta}\| ) + \|\vect{\Delta}\|^2,
\end{align*}
where the last step is due to \reflem{lem_isl}. Note that $\lambda$ is a constant.
Provided that $\|\Delta\tensor{A}\|$ and, hence, $\|\widetilde{\Delta\vect{p}}\|$, are sufficiently small, i.e., so that the inequality
\[
4 \lambda^2 \|\forcebold{\nabla}\|^2 + 4 \lambda \|\vect{\Delta}\| \le 4 \lambda^2 \|\widetilde{\Delta\vect{p}}\|^2 + 4 \lambda \|\widetilde{\Delta\vect{p}}\| \le 1
\]
is satisfied, 
then the foregoing implies the lower bound 
\[
 \|\dot{\vect{\Delta}}\|^2 \le \|\forcebold{\nabla}\|^2 + \|\vect{\Delta}\|^2 = \|\forcebold{\nabla}\|^2 + 2 \cdot \mathrm{Re} \langle \forcebold{\nabla}, \vect{\Delta} \rangle  + \|\vect{\Delta}\|^2 = \|\widetilde{\Delta\vect{p}}\|^2,
\]
where the first equality is because of the orthogonality in the Hermitian inner product of $\forcebold{\nabla}$ and $\vect{\Delta}$.
Thus, by combining the lower and upper bound and dividing by $\|\Delta\tensor{A}\|$, we can conclude that for every nonzero $\Delta\tensor{A}$ of sufficiently small norm the following relations hold true
\begin{align}\label{eqn_main_proof_deltap_ineq}
 \frac{\|\dot{\vect{\Delta}}(\Delta\tensor{A}, \widehat{S}(\Delta\tensor{A}))\|}{\|\Delta\tensor{A}\|} 
 \le \frac{\|\widetilde{\Delta\vect{p}}(\Delta\tensor{A})\|}{\|\Delta\tensor{A}\|}
 \le (1 + \delta(\Delta\tensor{A})) \frac{\|\dot{\vect{\Delta}}(\Delta\tensor{A},\widehat{S}(\Delta\tensor{A}))\|}{\|\Delta\tensor{A}\|},
\end{align}
provided that $\epsilon \ge \|\Delta\tensor{A}\|$ is small enough and where I explicitly indicated the dependence on $\Delta\tensor{A}$ and $\widehat{S}$, which in turn depends on the former. 
Let
\begin{align*}
 \Delta\tensor{A}' \in \underset{\Delta\tensor{A} \in \Var{G}_\epsilon}{\arg\max} \; \max_{\widehat{S}(\Delta\tensor{A})\in\Var{Z}} \frac{\|\dot{\vect{\Delta}}(\Delta\tensor{A}, \widehat{S}(\Delta\tensor{A}))\|}{\|\Delta\tensor{A}\|}
 \quad\text{and}\quad
 \Delta\tensor{A}'' \in \underset{\Delta\tensor{A} \in \Var{G}_\epsilon}{\arg\max} \frac{\|\widetilde{\Delta\vect{p}}(\Delta\tensor{A})\|}{\|\Delta\tensor{A}\|},
\end{align*}
where $\Var{Z}$ is as in \refeqn{eqn_main_proof_many_scale}. Also let 
\begin{align*}
 \widehat{S}' \in \underset{\widehat{S}\in\Var{Z}}{\arg\max} \frac{\|\dot{\vect{\Delta}}(\Delta\tensor{A}', \widehat{S})\|}{\|\Delta\tensor{A}'\|}.
\end{align*}
It follows that
\begin{align}\label{eqn_main_proof_sandwich}
\frac{\|\dot{\vect{\Delta}}(\Delta\tensor{A}',\widehat{S}')\|}{\|\Delta\tensor{A}'\|} 
\le \frac{\|\widetilde{\Delta\vect{p}}(\Delta\tensor{A}')\|}{\|\Delta\tensor{A}'\|} 
\le \frac{\|\widetilde{\Delta\vect{p}}(\Delta\tensor{A}'')\|}{\|\Delta\tensor{A}''\|} 
&\le (1 + \delta(\Delta\tensor{A}'')) \frac{\|\dot{\vect{\Delta}}(\Delta\tensor{A}'', \bullet)\|}{\|\Delta\tensor{A}''\|} \\
&\le \frac{\|\dot{\vect{\Delta}}(\Delta\tensor{A}',\widehat{S}')\|}{\|\Delta\tensor{A}'\|} \cdot \maxitout (1 + \delta(\Delta\tensor{A}) ), \nonumber
\end{align}
where the first step is by the first inequality in \refeqn{eqn_main_proof_deltap_ineq}, the second inequality is by optimality of $\Delta\tensor{A}''$, the third inequality is because of the second inequality in \refeqn{eqn_main_proof_deltap_ineq} with the ``$\bullet$'' indicating that the inequality is valid for all $S(\Delta\tensor{A}'')\in\Var{Z}$ (so in particular the maximum), and the last step is because of the optimality of $\Delta\tensor{A}'$.
For continuing, I will drop the explicit notation indicating the dependence on $\Delta\tensor{A}$ and $\widehat{S}$ again and we will consider the sequences
\[
c_n = \min_{\Delta\tensor{A}_n \in \Var{G}_{1/n}} \min_{\widehat{S}(\Delta\tensor{A}_n)\in\Var{Z}} \frac{\|\Delta\tensor{A}_n\|}{\|\dot{\vect{\Delta}}_n\|} = \frac{\|\Delta\tensor{A}_n'\|}{\|\dot{\vect{\Delta}}_n\|}  \quad\text{and}\quad
s_n = \min_{\Delta\tensor{A}_n \in \Var{G}_{1/n}} \min_{\widehat{S}(\Delta\tensor{A}_n)\in\Var{Z}} \frac{\|T_\vect{p} \dot{\vect{\Delta}}_n\|}{\|\dot{\vect{\Delta}}_n\|},
\]
where $n$ is sufficiently large, 
and then it follows from the first part of \refeqn{eqn_main_proof_seq3} that 
\[
 c_n \ge s_n + \min_{\Delta\tensor{A}_n \in \Var{G}_{1/n}} \min_{\widehat{S}(\Delta\tensor{A}_n)\in\Var{Z}} \varepsilon_n 
\quad\text{and}\quad
 s_n \ge c_n + \min_{\Delta\tensor{A}_n \in \Var{G}_{1/n}} \min_{\widehat{S}(\Delta\tensor{A}_n)\in\Var{Z}} (-\varepsilon_n);
 \]
note that I used the subscript $n$ for indicating the dependence of $\dot{\vect{\Delta}}$ on $\Delta\tensor{A}_n$. From the second part of \refeqn{eqn_main_proof_seq3}, it follows that we can bound 
\begin{align}\label{eqn_main_proof_cinvbound}
s_n - \vartheta_n \le c_n \le s_n + \vartheta_n,
\;\text{so}\quad
(s_n - \vartheta_n)^{-1} \ge c_n^{-1} \ge (s_n + \vartheta_n)^{-1}, 
\end{align}
for sufficiently large $n$ so that $s_n > \vartheta_n$. Note that $\vartheta_n\to0$ as $n\to\infty$ because of \refeqn{eqn_main_proof_seq3}, \refeqn{eqn_main_proof_converges} and \refcor{cor_isl_terracini}.
Since $\dot{\vect{\Delta}}_n \in \Var{K}^\perp$, it follows immediately from the Courant--Fisher minimax characterization of the least singular values \cite[Theorem 4.2.11]{MatrixAnalysis} that 
\[
0 < \varsigma_N(T_\vect{p}) \le \frac{\|T_\vect{p} \dot{\vect{\Delta}}_n\|}{\|\dot{\vect{\Delta}}_n\|} \le \varsigma_{1}( T_\vect{p} ),
\]
where $N = r(\Sigma+1)$, $\varsigma_i(T_\vect{p})$ denotes the $i$th singular value of $T_\vect{p}$, and the first inequality is due to the assumption on the rank of $T_\vect{p}$. I claim that the sequence of $s_n$ converges:
\begin{align}\label{eqn_main_proof_seqconv}
 \lim_{n\to\infty} s_n =
 \lim_{\epsilon\to0} \min_{\Delta\tensor{A}\in\Var{G}_\epsilon} \min_{\widehat{S}(\Delta\tensor{A})\in\Var{Z}} \frac{\|T_\vect{p} \dot{\vect{\Delta}}(\Delta\tensor{A}, \widehat{S}(\Delta\tensor{A}))\|}{\|\dot{\vect{\Delta}}(\Delta\tensor{A}, \widehat{S}(\Delta\tensor{A})\|} = \varsigma_N(T_\vect{p}).
\end{align}
This statement would conclude the proof because then all of the following limits are well-defined:
\[
 \lim_{n\to\infty} (s_n - \vartheta_n)^{-1} = \bigl(\varsigma_N(T_\vect{p})\bigr)^{-1} \quad\text{and}\quad \lim_{n\to\infty} (s_n + \vartheta_n)^{-1} = \bigl(\varsigma_N(T_\vect{p})\bigr)^{-1},
\]
sandwiching $c^{-1}_n$ in \refeqn{eqn_main_proof_cinvbound}, so that by considering \refeqn{eqn_main_proof_sandwich} we find
\begin{align*}
\lim_{n\to\infty} c_n^{-1}
\le \lim_{n\to\infty}\max_{\Delta\tensor{A}_n \in \Var{G}_{1/n}} \frac{\|\widetilde{\Delta\vect{p}}_n\|}{\|\Delta\tensor{A}_n\|}
\le \lim_{n\to\infty} c_n^{-1} \cdot \lim_{n\to\infty} \max_{\Delta\tensor{A}_n \in \Var{G}_{1/n}} (1+\delta_n),
\end{align*}
and as $\delta_n \to 0$, it follows that
\[
 \kappa_A 
 = \lim_{\epsilon\to0} \max_{\Delta\tensor{A}\in\Var{G}_\epsilon} \frac{ d(\vect{p}, f^\dagger(\tensor{A}+\Delta\tensor{A})) }{\|\Delta\tensor{A}\|} 
 = \lim_{\epsilon\to0} \max_{\Delta\tensor{A}\in\Var{G}_\epsilon}  \frac{\|\widetilde{\Delta\vect{p}}_n\|}{\|\Delta\tensor{A}_n\|} 
 = (\varsigma_N(T_\vect{p}))^{-1},
\]
which would conclude the proof.

\paragraph{\bf Part III: The limit}
For proving \refeqn{eqn_main_proof_seqconv}, we consider the specific perturbation $\Delta\vect{p}_n' = n^{-1} \vect{w}$, where $\vect{w}$ is the right singular vector of $T_\vect{p}$ corresponding to the singular value $\varsigma_N(T_\vect{p})$. The perturbed tensor is $\tensor{A}_n = f(\vect{p} + \Delta\vect{p}_n') = f(\vect{p} + n^{-1} \vect{w}) = \tensor{A} + \Delta\tensor{A}_n$.
If $n_0$ is sufficiently large, then $\tensor{A}_n \subset \Var{N}$ for all $n > n_0$. 
Clearly, $\|\Delta\tensor{A}_n\|\to0$ as $n \to \infty$. From the Taylor series expansion at the start of the proof, it is also clear that for large $n$, $\Delta\tensor{A}_n \ne 0$, as the norms of the higher-order terms will be dominated by $n^{-1} = \|n^{-1} \vect{w}\|$. Let $\Delta\vect{p}_n$ be the specific normalization of $\vect{p}+\Delta\vect{p}_n \in f^\dagger(\tensor{A}+\Delta\tensor{A}_n)$ that satisfies the properties imposed in \refeqn{eqn_main_proof_normalization}. 
Let $\widehat{S}_n \in \arg\min_{S\in\Var{T}} \|\vect{p} - S(\vect{p} + \Delta\vect{p}_n)\|$ be arbitrary. Then, $\vect{p} + \widetilde{\Delta\vect{p}}_n = \widehat{S}_n(\vect{p} + \Delta\vect{p}_n)$. By definition of $\Delta\vect{p}_n$ there exists some $T_n \in \Var{T}$ such that $\vect{p} + \Delta\vect{p}_n = T_n(\vect{p} + \Delta\vect{p}_n')$. Hence,
\(
 \vect{p} + \widetilde{\Delta\vect{p}}_n = \widehat{S}_n T_n (\vect{p} + n^{-1} \vect{w}), 
\)
so that
\begin{align}\label{eqn_main_proof_tildedeltap}
\|\widetilde{\Delta\vect{p}}_n\|  
= \min_{S \in \Var{T}} \| \vect{p} - S (\vect{p} + \Delta\vect{p}_n) \| 
&= \min_{S \in \Var{T}} \| \vect{p} - S T_n (\vect{p} + n^{-1}\vect{w}) \| \\
&= \min_{S \in \Var{T}} \| \vect{p} - S (\vect{p} + n^{-1}\vect{w})\| 
\le \| \vect{p} - I(\vect{p} + n^{-1} \vect{w}) \| 
= n^{-1}; \nonumber
\end{align}
the third equality is because $\Var{T}$ is a group.

Let $n$ be sufficiently large. Then, I claim that $\widehat{S}_n T_n = I + E_n$ where $E_n$ is a diagonal matrix whose diagonal entries are bounded by $\mathcal{O}(n^{-1})$. 
Indeed, the proof of \refprop{prop_distance_measure} shows that \refeqn{eqn_main_proof_tildedeltap} may only be true if for all $k =2, \ldots, d$ and all $i=1,\ldots,r$ simultaneously we have that
\begin{align}\label{eqn_main_proof_somebound}
 \|\sten{a}{i}{k} - \theta_{k,\pi_i} (\sten{a}{\pi_i}{k} + n^{-1}\sten{w}{\pi_i}{k})\| \le n^{-1}, \quad \theta_{k,\pi_i} \in \F\setminus\{0\},
\end{align}
where $\vect{w} = \operatorname{vecr}(\omega_1,\ldots,\omega_r)$, $\omega_i = (\sten{w}{i}{1},\ldots,\sten{w}{i}{d})$, and $\pi$ is a permutation of $\{1,2,\ldots,r\}$. Since $\sten{a}{i}{k}$ are constant vectors, it follows that this inequality can only be satisfied if
\[
 \sten{a}{i}{k} = \theta_{k,\pi_i} \sten{a}{\pi_i}{k} \quad\text{for all } k = 2, 3, \ldots, d \text{ and } i = 1, 2, \ldots, r.
\]
Suppose that $\pi$ is not the identity permutation, then there exists a $j \ne \pi_j$ for which it holds that
\begin{align*}
 \sten{a}{j}{1} \otimes \sten{a}{j}{2} \otimes \cdots \otimes \sten{a}{j}{d} + \sten{a}{\pi_j}{1} \otimes \sten{a}{\pi_j}{2} \otimes \cdots \otimes \sten{a}{\pi_j}{d} 
&= \sten{a}{j}{1} \otimes \sten{a}{j}{2} \otimes \cdots \otimes \sten{a}{j}{d} +  \sten{a}{\pi_j}{1} \otimes (\theta_{2,\pi_j}^{-1}\sten{a}{j}{2}) \otimes \cdots \otimes (\theta_{d,\pi_j}^{-1}\sten{a}{j}{d})\\
&= \bigl( \sten{a}{j}{1} + \theta_{2,\pi_j}^{-1}\cdots\theta_{d,\pi_j}^{-1}\sten{a}{\pi_j}{1} \bigr) \otimes \sten{a}{j}{2} \otimes \cdots \otimes \sten{a}{j}{d},
\end{align*}
showing that $\tensor{A}$ really has a decomposition of length at most $r-1$, which is a contradiction. So $\pi$ is the identity; hence \refeqn{eqn_main_proof_somebound} may be simplified to
\begin{align}\label{eqn_main_proof_theta_ineq}
\|(1-\theta_{k,i})\sten{a}{i}{k} - n^{-1} \theta_{k,i} \sten{w}{i}{k}\|^2 \le n^{-2},
\end{align}
which should hold for all $i=1,2,\ldots,r$ and $k=2,3,\ldots,d$.
It can be satisfied only if
\begin{align}\label{eqn_main_proof_theta_bound}
1 - 3 n^{-1} \|\sten{a}{i}{k}\|^{-1} \le \theta_{k,i} \le 1 + 3 n^{-1} \|\sten{a}{i}{k}\|^{-1}.
\end{align}
Note that \refeqn{eqn_main_proof_theta_ineq} is a quadratic equation in $\theta_{k,i}$ with positive coefficient for $\theta_{k,i}^2$. 
So it suffices proving that inequality \refeqn{eqn_main_proof_theta_ineq} is satisfied for $\theta_{k,i}=1$ while it is no longer satisfied for $\theta_{k,i}= 1 \pm 3 n^{-1} \|\sten{a}{i}{k}\|^{-1}$ to conclude that any $\theta_{k,i}$ satisfying \refeqn{eqn_main_proof_theta_ineq} must be contained in the interval \refeqn{eqn_main_proof_theta_bound}. For $\theta_{k,i}=1$ one has $\|(1-1)\sten{a}{i}{k} - n^{-1} \sten{w}{i}{k}\| = n^{-1} \|\sten{w}{i}{k}\| \le n^{-1}$, so \refeqn{eqn_main_proof_theta_ineq} is satisfied. Pugging $\theta_{k,i} = 1\pm 3 n^{-1} \|\sten{a}{i}{k}\|^{-1}$ into \refeqn{eqn_main_proof_theta_ineq}, we get from the triangle inequality
\begin{align*}
\|(1-\theta_{k,i})\sten{a}{i}{k} - n^{-1} \theta_{k,i} \sten{w}{i}{k}\|
&\ge 3 n^{-1} \|\sten{a}{i}{k}\|^{-1} \|\sten{a}{i}{k}\| - n^{-1} |1 \pm 3 n^{-1} \|\sten{a}{i}{k}\|^{-1}| \cdot \|\sten{w}{i}{k}\| \\
&\ge n^{-1} (3 - |1 \pm 3n^{-1}\|\sten{a}{i}{k}\|^{-1}|)
= n^{-1} (2 \mp 3 n^{-1}\|\sten{a}{i}{k}\|^{-1}) > n^{-1}
\end{align*}
where the last equality and inequality are valid only for large $n$, namely $n > 3 \|\sten{a}{i}{k}\|^{-1}$. 
Hence, $|\theta_{k,i} -1| \le 3 n^{-1} \|\sten{a}{i}{k}\|^{-1}$ for $k=2,\ldots,d$. For $\theta_{1,i} = (\theta_{2,i}\cdots\theta_{d,i})^{-1}$ we get the usual bound
\begin{align*}
|\theta_{1,i}-1| 
&= \Biggl|-1 + \prod_{k=2}^d (1 \pm 3n^{-1}\|\sten{a}{i}{k}\|^{-1})^{-1}\Biggr| 
\le -1 + 1 + \sum_{\kappa=1}^\infty \sum_{\|\forcebold{\ell}\|_1=\kappa} \prod_{k=2}^d \Bigl| 3 n^{-1} \|\sten{a}{i}{k}\|^{-1} \Bigr|^{\ell_k} \\
&\le \sum_{\kappa=1}^\infty (3n^{-1}\chi_i)^\kappa \|\vect{1}\otimes\cdots\otimes\vect{1}\|_1
\le \sum_{\kappa=1}^{\infty} (3(d-1)n^{-1}\chi_i)^\kappa
\le 6(d-1)n^{-1}\chi_i,
\end{align*}
where in the third step the same argument surrounding the derivation of \refeqn{eqn_isl_proof_sigpi1} was employed, and where in the last step we assumed $n \ge 6(d-1) \chi_i$.
Letting $\mu_{k,i}$ be defined as
\begin{subequations}\label{eqn_main_proof_mu}
\begin{align}
\theta_{k,i} &= 1 - \mu_{k,i}, &&k=2,3,\ldots,d, \; i=1,2,\ldots,r \\
\theta_{1,i} &= \theta_{2,i}^{-1} \cdots \theta_{d,i}^{-1} = 1 - \mu_{1,i} &&i=1,2,\ldots,r
\end{align}
\end{subequations}
we may write $E_n$ explicitly as
\[
 E_n = \diag(E_{n,1}, \ldots, E_{n,r}) 
 \quad\text{with }
 E_{n_i} = \diag( -\mu_{1,i} I_{n_1} , -\mu_{2,i} I_{n_2}, \ldots, -\mu_{d,i} I_{n_d}).
\]
By the foregoing derivation $|\mu_{k,i}| = \mathcal{O}(n^{-1})$.

We now consider the following factorization
\begin{align} \label{eqn_main_proof_lastfactor}
 \vect{p} + \widetilde{\Delta\vect{p}}_n 
 = \widehat{S}_n (\vect{p} + \Delta\vect{p}_n) 
 = \widehat{S}_n T_n (\vect{p} + \Delta\vect{p}_n') 
 &= (I + {E}_n) (\vect{p} + \Delta\vect{p}_n') \\
 &= \vect{p} + \Delta\vect{p}_n' + E_n \vect{p} + E_n \Delta\vect{p}_n' \nonumber\\
 &= \vect{p} + (\forcebold{\nabla}_{n,1} + \forcebold{\nabla}_{n,2}) + (\vect{\Delta}_{n,1} + \vect{\Delta}_{n,2} + \Delta\vect{p}_n' ) \nonumber
\end{align}
where $E_n \vect{p} = \forcebold{\nabla}_{n,1} + \vect{\Delta}_{n,1}$ and $E_n\Delta\vect{p}_n' = \forcebold{\nabla}_{n,2} + \vect{\Delta}_{n,2}$ with $\forcebold{\nabla}_{n,i} \in \Var{K}$ and $\vect{\Delta}_{n,i} \in \Var{K}^\perp$. One sees immediately that $\|\forcebold{\nabla}_{n,1}\| \le \|E_n\|_2 \|\vect{p}\| = \mathcal{O}(n^{-1})$, $\|\forcebold{\nabla}_{n,2}\| \le \|E_n\|_2 \|\Delta\vect{p}_n'\| = \mathcal{O}(n^{-2})$, and $\|\forcebold{\Delta}_{n,2}\| \le \|E_n\|_2 \|\Delta\vect{p}_n'\| = \mathcal{O}(n^{-2})$.
The key difficulty lies in proving that $\|\vect{\Delta}_{n,1}\| = \mathcal{O}(n^{-2})$. By definition,
\[
 \vect{\Delta}_{n,1} = (I - K K^\dagger) E_n\vect{p} = (I - K \diag(K_1^H K_1,\ldots,K_r^H K_r)^{-1} K^H) E_n \vect{p}
\]
so it suffices to prove that $\|(I - K_i K_i^\dagger) \vect{q}_i\| = \mathcal{O}(n^{-2})$, where 
\[
 \vect{q}_i = \vect{q}^{(1)}_i = 
 \begin{bmatrix}
  -\mu_{1,i} \sten{a}{i}{1} \\
  -\mu_{2,i} \sten{a}{i}{2} \\
  \vdots \\
  -\mu_{d,i} \sten{a}{i}{d}
 \end{bmatrix}
\]
with $\mu_{k,i}$ as in \refeqn{eqn_main_proof_mu}. By definition and some Maclaurin series, we know that
\begin{align}\label{eqn_main_proof_gamma}
 -\mu_{1,i} = -1 + \prod_{k=2}^d (1-\mu_{k,i})^{-1} = \sum_{k=2}^d \mu_{k,i} + \sum_{\kappa=2}\sum_{\|\ell\|_1=\kappa} \prod_{k=2}^d \mu_{k,i}^{\ell_k} = \sum_{k=2}^d \mu_{k,i} + M_i;
\end{align}
it can be shown in the usual way that $|M_i| = \mathcal{O}(n^{-2})$.
Let $\alpha_{k,i} = \|\sten{a}{i}{k}\|$.
Then, we find \(\vect{q}^{(2)}_i = K_i^H \vect{q}^{(1)}_i.\) The entries of $\vect{q}_i^{(2)}$ are
\[
q^{(2)}_{k-1,i} = -\mu_{1,i}\alpha_{1,i}^2 + \mu_{k,i} \alpha_{k,i}^2, \quad k=2,\ldots,d.
\]
For the next step $\vect{q}^{(3)}_i = (K_i^H K_i)^{-1} \vect{q}^{(2)}_i$, we recall from the Sherman--Morrison formula that
\[
 (K_i^H K_i)^{-1} = (\diag(\alpha_{2,i}^2,\ldots,\alpha_{d,i}^2) + \alpha_{1,i}^2 \vect{1}\vect{1}^T)^{-1} 
 = \diag(\alpha_{2,i}^{-2},\ldots,\alpha_{d,i}^{-2}) - \frac{\alpha_{1,i}^2}{1 + \alpha_{1,i}^2 \sum_{j=2}^d \alpha_{j,i}^{-2}} \vect{h}\vect{h}^T,
\]
where $\vect{h}^T_i = \left[\begin{smallmatrix}\alpha_{2,i}^{-2} & \cdots & \alpha_{d,i}^{-2}\end{smallmatrix}\right]$. Let $\widehat{\alpha}_i = \alpha_{1,i}^2 \sum_{j=2}^d \alpha_{j,i}^{-2}$. It is then straightforward to establish that
\[
 \vect{h}^T_i \vect{q}^{(2)}_i = - \mu_{1,i} \widehat{\alpha}_i + \sum_{k=2}^d \mu_{k,i},
\]
so that the elements of $\sten{q}{i}{(3)}$ are given by
\begin{align*}
 q_{k-1,i}^{(3)} 
 &= -\mu_{1,i}\alpha_{1,i}^2 \alpha_{k,i}^{-2} + \mu_{k,i} - \frac{\alpha_{1,i}^{2} \alpha_{k,i}^{-2} ( -\mu_{1,i}\widehat{\alpha}_i + \sum_{k=2}^d \mu_{k,i}) }{1 + \widehat{\alpha}_i}\\
 &= -\mu_{1,i}\alpha_{1,i}^2 \alpha_{k,i}^{-2} + \mu_{k,i} - \frac{\alpha_{1,i}^{2} \alpha_{k,i}^{-2} ( 1+\widehat{\alpha}_i) \sum_{k=2}^d \mu_{k,i} + \alpha_{1,i}^{2} \alpha_{k,i}^{-2} \widehat{\alpha}_i M_i }{1 + \widehat{\alpha}_i} \\
 &= \mu_{k,i} + \alpha_{1,i}^2 \alpha_{k,i}^{-2} M_i - \alpha_{1,i}^2 \alpha_{k,i}^{-2} \widehat{\alpha}_i (1+\widehat{\alpha}_i)^{-1} M_i \\
 &= \mu_{k,i} + M_i \alpha_{1,i}^2 \alpha_{k,i}^{-2} (1 - \widehat{\alpha}_i(1+\widehat{\alpha}_i)^{-1}) 
 = \mu_{k,i} + M_{i,k},
\end{align*}
where in the second and third step \refeqn{eqn_main_proof_gamma} was used.
Finally $\vect{q}^{(4)}_i = \vect{q}^{(1)}_i - K_i \vect{q}^{(3)}_i = (I - K_i K_i^\dagger)\vect{q}_i$ where
\begin{align*}
 \vect{q}^{(4)}_i = 
 \begin{bmatrix}
  \bigl( \bigl(-\mu_{1,i} - \sum_{k=2}^d (\mu_{k,i} + M_{i,k}) \bigr) \sten{a}{i}{1} \\
  (-\mu_{2,i} + \mu_{2,i} + M_{i,2}) \sten{a}{i}{2} \\
  \vdots \\
  (-\mu_{d,i} + \mu_{d,i} + M_{i,d}) \sten{a}{i}{d}
 \end{bmatrix}
 = \begin{bmatrix}
   (M_i - \sum_{k=2}^d M_{i,k}) \sten{a}{i}{1} \\
   M_{i,2} \sten{a}{i}{2} \\
   \vdots \\
   M_{i,d} \sten{a}{i}{d}
   \end{bmatrix},
\end{align*}
from which it is obvious that $\|\vect{\Delta}_{n,1}\| = \mathcal{O}(n^{-2})$.

Continuing from \refeqn{eqn_main_proof_lastfactor}, we can apply the Iterated Scaling Lemma to $\vect{p} + \forcebold{\nabla}_n$, where $\forcebold{\nabla}_n = \forcebold{\nabla}_{n,1} + \forcebold{\nabla}_{n,2}$ and $\|\forcebold{\nabla}_n\|=\mathcal{O}(n^{-1})$, thusly obtaining 
\[
\vect{p} + \widetilde{\Delta\vect{p}}_n = \dot{\vect{p}}_n + \vect{\Delta}_n' + \vect{\Delta}_{n,1} + \vect{\Delta}_{n,2} + \Delta\vect{p}_n' = \dot{\vect{p}}_n + n^{-1} \vect{w} + n^{-2} \vect{x}_n,
\]
where $\vect{\Delta}_n' \in \Var{K}^\perp$ is of norm $\mathcal{O}(n^{-2})$ for sufficiently large $n$, and $\vect{x}_n$ is a vector whose norm can be uniformly bounded by a constant $C$. By definition, $\dot{\vect{\Delta}}_n = n^{-1} \vect{w} + n^{-2} \vect{x}_n$ and
$T_\vect{p} \vect{w} = \varsigma_N \vect{w}$.
From the sandwiching
\[
\frac{\varsigma_N - n^{-1} \|T_\vect{p}\|_2\|\vect{x}_n\|}{1 + n^{-1}\|\vect{x}_n\|}
\le \frac{n^{-1} \|T_\vect{p} (\vect{w} + n^{-1}\vect{x}_n)\|}{n^{-1} \|\vect{w} + n^{-1}\vect{x}_n\|} 
\le \frac{\varsigma_N + n^{-1} \|T_\vect{p}\|_2\|\vect{x}_n\|}{1 - n^{-1}\|\vect{x}_n\|},
\]
it follows that $\|T_\vect{p} \dot{\vect{\Delta}}(\Delta\tensor{A}_n, \bullet)\|/\|\dot{\vect{\Delta}}(\Delta\tensor{A}_n, \bullet))\|$, where ``$\bullet$'' indicates an arbitrary $\widehat{S}(\Delta\tensor{A}_n)\in\Var{Z}$, tends to $\varsigma_N( T_\vect{p} )$ as $n\to\infty$, concluding the proof.
\end{proof}

%
%
%
%
%
%
%
%

A relative condition number may then be defined as follows.

\begin{definition}[Condition number]
Let $\vect{p}$ and $\tensor{A}$ be as in \refthm{thm_condition}. Then, the relative condition number of the tensor rank decomposition problem $\tensor{A} = f(\vect{p})$ at $\vect{p}$ is 
\[
 \kappa = \kappa_A \cdot \frac{\| \tensor{A} \|}{ \| \vect{p} \| }.
\]
\end{definition}

\section{The norm-balanced condition number}\label{sec_norm_balanced}
It should be remarked that the absolute and relative condition numbers considered in the previous section are defined at $\vect{p}$ rather than at $\tensor{A}$. The reason is that their value depends on the particular representative $\vect{p} \in f^\dagger(\tensor{A})$ that was chosen. With the foregoing definition of an absolute and a relative condition number, we could now define the condition number of the entire fiber $f^\dagger(\tensor{A})$ in several ways, each with its particular interpretation. For instance, one could define the condition number of $f^\dagger$ at $\tensor{A}$ to be the maximum, minimum or (weighted) average condition number of all $\vect{p}\in f^\dagger$, which would respectively measure the conditioning of the worst possible representative, the best possible representative, and the (weighted) average representative. I believe that all of the foregoing definitions are very interesting from a mathematical viewpoint, however I am skeptical that such definitions correspond naturally with the current practice in data-analytic applications involving the tensor rank decomposition. Through private conversation with G.~Tomasi \cite{TomasiPhD}, it appeared to me that in many applications, especially originating from physical measurements, there is no a priori reason to prefer one scaling of the factors over another. Therefore, I believe that a natural choice for the condition number of $f^\dagger$ at $\tensor{A}$ is the condition number of the norm-balanced case, which is defined next.

\begin{definition}(Norm-balanced condition number) \label{def_norm_balanced_cond}
 Let $\alpha_i \in \R_+$ be strictly positive,\footnote{Every rank-$r$ tensor can be represented as such, by absorbing the sign into, e.g., $\sten{u}{i}{1}$.} and let
 \[
  \tensor{A} = \sum_{i=1}^r \alpha_i \sten{u}{i}{1} \otimes \cdots \otimes \sten{u}{i}{d} \quad\text{with } \|\sten{u}{i}{k}\|=1
 \]
be a robustly $r$-identifiable tensor rank decomposition in $\F^{n_1} \otimes \cdots \otimes \F^{n_d}$. Then, its condition number is defined to be the condition number of the norm-balanced representatives 
\[
p_i = (\alpha_i^{1/d} \sten{u}{i}{1}, \alpha_i^{1/d}\sten{u}{i}{2},\ldots,\alpha_i^{1/d}\sten{u}{i}{d}), \quad i=1,2,\ldots,r.
\]
That is, the norm-balanced absolute and relative condition numbers of $f^\dagger$ at $\tensor{A}$ are respectively
\[
 \kappa_A = \varsigma_N^{-1} \quad\text{and}\quad \kappa = \varsigma_N^{-1} \frac{\|\tensor{A}\|}{\|\vect{p}\|},
\]
where $\vect{p} = \operatorname{vecr}(p_1,p_2,\ldots,p_r)$ and $\varsigma_N$ is the $N=r(\Sigma+1)$th largest singular value of $T_\vect{p}$.
\end{definition}

Whenever I refer to respectively \emph{the} absolute and relative condition number of a tensor, I mean the norm-balanced absolute and relative condition number, respectively. 

\begin{center}
\emph{In the remainder of the paper, only the norm-balanced condition number of $f^\dagger$ will be considered.}
\end{center}

The norm-balanced condition number has another practical advantage over the other choices: it is very easy to compute when a decomposition of the tensor is given, requiring only the least singular value of one Terracini's matrix; for completeness, such a procedure is presented as \refalg{alg_norm_balanced_cond}. Lines $1$ to $9$ implement the norm-balancing of the input tensor rank decomposition. The required Terracini matrix is constructed in lines $10$ to $13$. Finally, the condition number is computed in line $15$. A concrete implementation of this algorithm in Matlab/Octave is included in \refapp{app_code}.

\begin{algorithm}
\caption{Computing the norm-balanced relative condition number}
\label{alg_norm_balanced_cond}

\SetAlCapSty{textsc}

\SetKwInOut{Input}{input}
\SetKwInOut{Output}{output}
\DontPrintSemicolon

\Input{A robustly $r$-identifiable tensor $\tensor{A}=\sum_{i=1}^r \sten{a}{i}{1}\otimes\cdots\otimes\sten{a}{i}{d} \in \F^{n_1}\otimes\cdots\otimes\F^{n_d}$ }
\Output{The norm-balanced condition number $\kappa \in \R_+$ of $\tensor{A}$.}

$\zeta \leftarrow 1$\;
\For{$i = 1, 2, \ldots, r$}{
$\gamma \leftarrow 1$\;
\For{$k = 1, 2, \ldots, d$}{
$\eta \leftarrow \|\sten{a}{i}{k}\|$;\quad $\gamma \leftarrow \gamma \cdot \eta^{1/d}$;\quad $\sten{a}{i}{k} \leftarrow \sten{a}{i}{k} / \eta$\;
}
$\sten{a}{i}{k} \leftarrow \gamma \sten{a}{i}{k},\quad k=1,2,\ldots,d$\;
$\zeta \leftarrow d\gamma^2$\;
}

$T \leftarrow []$\;
\For{$i = 1, 2, \ldots, r$}{
  $T \leftarrow \begin{bmatrix} T & I_{n_1} \otimes \sten{a}{i}{2} \otimes \cdots \otimes \sten{a}{i}{d} & \cdots & \sten{a}{i}{1} \otimes \cdots \otimes \sten{a}{i}{d-1} \otimes I_{n_d} \end{bmatrix}$\;
}
$N \leftarrow r (n_1 + \cdots + n_d - d + 1)$\;
$\kappa \leftarrow (\varsigma_N(T) \sqrt{\zeta})^{-1} \cdot \|\tensor{A}\|$\;
\end{algorithm}

The computational complexity of computing the norm-balanced condition number is determined as follows. Note that we may always assume that $r\Sigma < \Pi$, because otherwise the input cannot be robustly $r$-identifiable. Since the condition number is $\infty$ in this case no involved computations are necessary. It is straightforward to verify that the computation of the least singular value in line $15$ of \refalg{alg_norm_balanced_cond} contributes the dominant factor to the time complexity analysis. For a general $m \times n$ matrix with $n \le m$, computing the least singular value costs $\mathcal{O}(m n^2)$ operations. The asymptotic space complexity is determined by the cost of storing Terracini's matrix, which is of size $\Pi \times r(\Sigma+d)$. This proves the following elementary result.

\begin{proposition}
If a length $r$ decomposition of a tensor in $\F^{n_1} \otimes \cdots \otimes \F^{n_d}$ is supplied to \refalg{alg_norm_balanced_cond} as input, then its time and space complexity are respectively
\[
  \mathcal{O}( \Pi (r (\Sigma+d))^2 ) = \mathcal{O}(\Pi^3) \quad\text{and}\quad \mathcal{O}(r (\Sigma+d) \Pi) = \mathcal{O}(\Pi^2), 
\]
where $\Pi = \prod_{i=1}^d n_i$ and $\Sigma + d = \sum_{i=1}^d n_i$. 
\end{proposition}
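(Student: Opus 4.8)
The statement is a routine accounting of the cost of each phase of \refalg{alg_norm_balanced_cond}, so the plan is to tally the time and space contributions of the three blocks of the algorithm, identify the dominant term, and then collapse the resulting bounds using the constraint imposed by robust $r$-identifiability. First I would dispose of the norm-balancing block in lines $1$--$9$: it visits each of the $r$ rank-$1$ terms once, and for each term it computes $d$ vector norms and rescales the $d$ factor vectors $\sten{a}{i}{k}\in\F^{n_k}$, so it costs $\mathcal{O}\bigl(\sum_{k=1}^d n_k\bigr)=\mathcal{O}(\Sigma+d)$ arithmetic operations per term and $\mathcal{O}\bigl(r(\Sigma+d)\bigr)$ in total, using no storage beyond the input. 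The scalar bookkeeping ($\zeta$, $\gamma$, $\eta$) is negligible.

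Next I would bound the cost of assembling Terracini's matrix $T$ in lines $10$--$13$. Its definition \refeqn{eqn_terracini} shows that it has $r\sum_{k=1}^d n_k=r(\Sigma+d)$ columns, each of length $\Pi$, and each column is a Kronecker product of $d-1$ vectors and one standard basis vector, hence computable in $\mathcal{O}(\Pi)$ operations. Therefore forming and storing $T$ takes $\mathcal{O}\bigl(\Pi\, r(\Sigma+d)\bigr)$ time and the same amount of space; this is the quantity that will dominate the \emph{space} complexity.

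Then I would invoke the standard cost of extracting the least relevant singular value in line $15$. For a dense $m\times n$ matrix with $n\le m$ this costs $\mathcal{O}(mn^2)$ operations (e.g.\ bidiagonalization followed by refinement of the smallest singular value); equivalently one may form the Gram matrix $T^H T$, which is $r(\Sigma+d)\times r(\Sigma+d)$, at cost $\mathcal{O}\bigl(\Pi\,(r(\Sigma+d))^2\bigr)$, and diagonalize it at cost $\mathcal{O}\bigl((r(\Sigma+d))^3\bigr)$, which is no larger. With $m=\Pi$ and $n=r(\Sigma+d)$ this gives $\mathcal{O}\bigl(\Pi\,(r(\Sigma+d))^2\bigr)$, which dominates the contributions of the first two blocks; the remaining operations in line $15$ (one Frobenius norm of $\tensor{A}$, one square root, one division) are $\mathcal{O}(\Pi)$. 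Collecting the estimates yields the time bound $\mathcal{O}\bigl(\Pi\,(r(\Sigma+d))^2\bigr)$ and the space bound $\mathcal{O}\bigl(r(\Sigma+d)\,\Pi\bigr)$.

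Finally I would simplify the asymptotic expressions, which is where the only genuine (and still very mild) point lies. Robust $r$-identifiability forces $r\Sigma<\Pi$, since otherwise the least singular value in question is $0$ and the algorithm returns $\infty$ without entering the expensive computations; and since each $n_k-1\ge 1$ we have $d\le\Sigma$, whence $r(\Sigma+d)\le 2r\Sigma<2\Pi=\mathcal{O}(\Pi)$. Substituting $r(\Sigma+d)=\mathcal{O}(\Pi)$ into the two bounds collapses them to $\mathcal{O}(\Pi^3)$ and $\mathcal{O}(\Pi^2)$ respectively, with $\Pi=\prod_{i=1}^d n_i$ and $\Sigma+d=\sum_{i=1}^d n_i$. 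I do not anticipate any real obstacle; the only thing to be careful about is the implicit regime $n\le m$ used in the singular-value cost estimate, which is exactly the regime $r(\Sigma+1)\le\Pi$ in which the algorithm performs nontrivial work — outside it the condition number is declared to be $\infty$ and the routine terminates immediately, consistent with \refthm{thm_condition}.
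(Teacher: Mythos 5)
Your proposal is correct and follows essentially the same route as the paper: note that robust $r$-identifiability (enforced by the early exit in the algorithm) guarantees the subgeneric regime, observe that the $\mathcal{O}(mn^2)$ cost of the least singular value of the $\Pi\times r(\Sigma+d)$ Terracini matrix dominates the time and its storage dominates the space, and then simplify using $r(\Sigma+d)=\mathcal{O}(\Pi)$. You merely spell out the bookkeeping for lines $1$--$13$ that the paper dismisses as straightforward.
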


\section{Basic properties of the norm-balanced condition number}\label{sec_basic_properties}
We proceed by investigating several basic properties of the norm-balanced condition number, starting with three desirable properties that are also exhibited by the familiar condition number of a matrix, namely its continuity (under mild conditions), and its invariance under scaling and orthogonal change of bases.

\begin{proposition}
The absolute and relative condition numbers are continuous in a neighborhood of a robustly $r$-identifiable tensor. 
\end{proposition}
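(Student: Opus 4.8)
The plan is to exhibit $\kappa_A$, in a neighborhood of a robustly $r$-identifiable tensor $\tensor{A}_0$, as a composition of continuous maps, and then to deduce the same for $\kappa$. Fix norm-balanced representatives $\vect{p}_0=\operatorname{vecr}(p_{0,1},\ldots,p_{0,r})$ of $\tensor{A}_0$ together with the index set $\Var{C}$ and the chart constructed in Part I of the proof of $\refthm{thm_condition}$: there it is shown that, after shrinking the neighborhood $\Var{N}\subset\SecZ{r}{\Var{S}_\F}$ of $\tensor{A}_0$ on which every tensor is robustly $r$-identifiable, the analytic map $f_{\tensor{A}_0}=f\circ\pi_{\tensor{A}_0}^{-1}$ is a local diffeomorphism onto $\Var{N}$. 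Composing its smooth inverse with the analytic section $\pi_{\tensor{A}_0}^{-1}$ yields a smooth map
\[
 \vect{q}:\Var{N}\to\F^{r(\Sigma+d)},\qquad f\bigl(\vect{q}(\tensor{A}')\bigr)=\tensor{A}',
\]
so that $\vect{q}(\tensor{A}')\in f^\dagger(\tensor{A}')$ is one decomposition of $\tensor{A}'$ depending continuously on $\tensor{A}'$; since $\tensor{A}'$ has rank exactly $r$, every factor vector occurring in $\vect{q}(\tensor{A}')$ is nonzero.

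Next I would feed $\vect{q}(\tensor{A}')$ through the norm-balancing map $\nu$ that sends $\operatorname{vecr}(b_1,\ldots,b_r)$, with $b_i=(\sten{a}{i}{1},\ldots,\sten{a}{i}{d})$ and all $\sten{a}{i}{k}\neq 0$, to $\operatorname{vecr}(p_1,\ldots,p_r)$ where $p_i=\bigl(\beta_i\|\sten{a}{i}{1}\|^{-1}\sten{a}{i}{1},\ldots,\beta_i\|\sten{a}{i}{d}\|^{-1}\sten{a}{i}{d}\bigr)$ and $\beta_i=\bigl(\prod_{k=1}^d\|\sten{a}{i}{k}\|\bigr)^{1/d}$. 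Each coordinate of $\nu$ is assembled from norms, products, $d$-th roots of positive reals, and divisions by nonzero quantities, so $\nu$ is continuous on the open set of tuples with nonzero factors; moreover the per-block scalars $\beta_i\|\sten{a}{i}{k}\|^{-1}$ multiply to $1$, hence $\nu(\vect{q})\in\Var{B}\vect{q}$ and $\nu(\vect{q}(\tensor{A}'))$ is a norm-balanced representative of $\tensor{A}'$ in the sense of $\refdef{def_norm_balanced_cond}$. Since $\vect{p}\mapsto T_{\vect{p}}$ is polynomial (see $\refeqn{eqn_terracini}$) and $A\mapsto\varsigma_N(A)$ is continuous (indeed $1$-Lipschitz in the spectral norm), the composite
\[
 \Var{N}\ni\tensor{A}'\;\longmapsto\;\varsigma_N\bigl(T_{\nu(\vect{q}(\tensor{A}'))}\bigr)
\]
is continuous; it is strictly positive on $\Var{N}$ because robust $r$-identifiability forces $T_{\vect{p}'}$ to have rank exactly $N=r(\Sigma+1)$ (its column span is the $N$-dimensional Zariski tangent space). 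Hence its reciprocal is continuous on $\Var{N}$.

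Finally I would check that this reciprocal is exactly $\kappa_A(\tensor{A}')$ of $\refdef{def_norm_balanced_cond}$, i.e. that $\varsigma_N(T_{\vect{p}'})$ does not depend on which norm-balanced representative of $\tensor{A}'$ is used. Two such representatives differ by a permutation of the $r$ blocks and, within each block, by scalars $\theta_1,\ldots,\theta_d$ of modulus $1$ with $\prod_k\theta_k=1$. A block permutation permutes the column blocks $T_i$ of $T_{\vect{p}'}$, and the scaling replaces $T_i$ by $T_i\cdot\diag(\theta_1^{-1}I_{n_1},\ldots,\theta_d^{-1}I_{n_d})$, since the $k$-th block of $T_i$ acquires the unimodular factor $\prod_{j\neq k}\theta_j=\theta_k^{-1}$; in both cases $T_{\vect{p}'}$ is right-multiplied by a unitary matrix, which preserves all singular values. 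Thus $\kappa_A=\varsigma_N^{-1}$ is continuous on $\Var{N}$. The same composition shows $\|\nu(\vect{q}(\tensor{A}'))\|$ is continuous, and it equals $\|\vect{p}_0\|>0$ at $\tensor{A}_0$, so after a further shrink of $\Var{N}$ it is bounded away from $0$; together with the obvious continuity of $\|\tensor{A}'\|$ this gives continuity of $\kappa=\kappa_A\,\|\tensor{A}'\|/\|\nu(\vect{q}(\tensor{A}'))\|$. The only genuine difficulty is the first step — producing a continuous local selection of one decomposition from the set-valued $f^\dagger$ — and for that I would rely entirely on the local-diffeomorphism statement already established in Part I of the proof of $\refthm{thm_condition}$; the remaining steps are routine verifications that composition, norm-balancing, Terracini's matrix and $\varsigma_N$ all preserve continuity and that the residual permutation/unimodular ambiguity is invisible to $\varsigma_N$.
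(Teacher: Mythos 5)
Your proof is correct and takes essentially the same route as the paper, which settles this proposition in one line by appealing to the continuity of $T_\vect{p}$ in the parameters $\vect{p}$ together with the regularity built into robust $r$-identifiability; you merely make explicit the two ingredients that one-liner leaves implicit, namely the continuous local selection of a norm-balanced decomposition via the chart from Part I of the proof of \refthm{thm_condition}, and the invariance of $\varsigma_N$ under the residual permutation and unimodular-scaling ambiguity (which also shows the norm-balanced condition number is well defined). No gaps to report.
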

\begin{proof}
 This follows immediately from the continuity of Terracini's matrix $T_\vect{p}$ in the parameters $\vect{p}$, which is a consequence from the assumption of regularity in the definition of robust $r$-identifiability.
\end{proof}

\begin{proposition}
The relative condition number is scale-invariant. 
\end{proposition}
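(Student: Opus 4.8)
The plan is to track how each ingredient of $\kappa = \varsigma_N^{-1}\,\|\tensor{A}\|/\|\vect{p}\|$ in \refdef{def_norm_balanced_cond} transforms under $\tensor{A}\mapsto\lambda\tensor{A}$ with $\lambda\in\F\setminus\{0\}$, and then to observe that the three transformations cancel. Throughout, $\vect{p}=\operatorname{vecr}(p_1,\dots,p_r)$ denotes the norm-balanced representatives of $\tensor{A}$, and I will use only the defining formula for Terracini's matrix \refeqn{eqn_terracini}.

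First I would reduce to a positive real scalar. Write $\lambda=\mu\zeta$ with $\mu=|\lambda|>0$ and $|\zeta|=1$. If $\tensor{A}=\sum_i\alpha_i\,\sten{u}{i}{1}\otimes\cdots\otimes\sten{u}{i}{d}$ with $\|\sten{u}{i}{k}\|=1$ and $\alpha_i>0$, then $\lambda\tensor{A}=\sum_i(\mu\alpha_i)\,(\zeta\sten{u}{i}{1})\otimes\sten{u}{i}{2}\otimes\cdots\otimes\sten{u}{i}{d}$ is again a unit-norm decomposition with positive coefficients $\mu\alpha_i$, so its norm-balanced representatives differ from those of $\mu\tensor{A}$ only by left multiplication of each $p_i$ by the diagonal unitary matrix $\diag(\zeta I_{n_1},I_{n_2},\dots,I_{n_d})$. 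This column operation is an isometry on $\vect{p}$ and, by the block form \refeqn{eqn_terracini} (the first block of each $T_i$ carries $I_{n_1}$ and is unchanged, every other block acquires the unimodular factor $\zeta$), multiplies $T_\vect{p}$ on the right by a unitary diagonal matrix, hence leaves $\varsigma_N$ unchanged; and $\|\lambda\tensor{A}\|=\mu\|\tensor{A}\|=\|\mu\tensor{A}\|$. Thus $\kappa(\lambda\tensor{A})=\kappa(\mu\tensor{A})$ — in particular this disposes of the sign $\zeta=-1$ over $\R$. Robust $r$-identifiability of $\lambda\tensor{A}$, needed for $\kappa$ to be finite and well defined, is immediate since $\tensor{B}\mapsto\lambda\tensor{B}$ is a linear automorphism of $\F^\Pi$ fixing $\SecZ{r}{\Var{S}_\F}$, the fibers of $f$, and the equivalence $\sim$.

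Next comes the core computation for $\mu>0$. The norm-balanced representatives of $\mu\tensor{A}$ are exactly $\mu^{1/d}p_i$, so $\vect{p}':=\operatorname{vecr}(\mu^{1/d}p_1,\dots,\mu^{1/d}p_r)=\mu^{1/d}\vect{p}$ and $\|\vect{p}'\|=\mu^{1/d}\|\vect{p}\|$. Because each of the $d$ blocks of $T_i$ in \refeqn{eqn_terracini} is a tensor product of exactly $d-1$ of the factor vectors $\sten{a}{i}{k}$ together with one identity block, scaling every factor vector by $\mu^{1/d}$ scales each block — and hence all of $T_{\vect{p}'}$ — by $(\mu^{1/d})^{d-1}=\mu^{(d-1)/d}$, so $\varsigma_N(T_{\vect{p}'})=\mu^{(d-1)/d}\,\varsigma_N(T_\vect{p})$. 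Since also $\|\mu\tensor{A}\|=\mu\|\tensor{A}\|$, substituting into \refdef{def_norm_balanced_cond} gives
\[
\kappa(\mu\tensor{A})=\frac{\|\mu\tensor{A}\|}{\varsigma_N(T_{\vect{p}'})\,\|\vect{p}'\|}=\frac{\mu\,\|\tensor{A}\|}{\mu^{(d-1)/d}\varsigma_N(T_\vect{p})\cdot\mu^{1/d}\|\vect{p}\|}=\frac{\|\tensor{A}\|}{\varsigma_N(T_\vect{p})\,\|\vect{p}\|}=\kappa(\tensor{A}),
\]
because $\tfrac{d-1}{d}+\tfrac1d=1$. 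Combining with the first step yields $\kappa(\lambda\tensor{A})=\kappa(\tensor{A})$ for every $\lambda\in\F\setminus\{0\}$.

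I do not expect a genuine obstacle; the only point requiring care is the bookkeeping of homogeneity degrees — $f$ is homogeneous of degree $d$ in the factor vectors, $\vect{p}$ of degree $1$, and Terracini's matrix of degree $d-1$ — and verifying that the exponents of $\mu$ combine to $1-\tfrac{d-1}{d}-\tfrac1d=0$. This clean cancellation is precisely what singles out the normalization $\alpha_i^{1/d}$ of \refdef{def_norm_balanced_cond} as the balanced one; note, by the same computation, that the absolute condition number $\kappa_A=\varsigma_N^{-1}$ is \emph{not} scale-invariant (it scales like $\mu^{-(d-1)/d}$), which is exactly why one passes to the relative version, just as for matrices.
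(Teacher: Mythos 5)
Your proof is correct and follows essentially the same route as the paper: the core is the homogeneity bookkeeping $T_{\mu^{1/d}\vect{p}}=\mu^{(d-1)/d}T_{\vect{p}}$, $\|\mu^{1/d}\vect{p}\|=\mu^{1/d}\|\vect{p}\|$, $\|\mu\tensor{A}\|=\mu\|\tensor{A}\|$, with the exponents cancelling, exactly as in the paper's computation with $\beta=\alpha^d$ and representative $\alpha\vect{p}$. Your absorption of the unimodular factor $\zeta$ into the first factor, giving a right-multiplication of Terracini's matrix by a unitary diagonal matrix, is the same device the paper uses (with $\pm1$) for the real, even-$d$, negative-scalar case, just stated for general phases, and it in fact treats the complex case a bit more explicitly than the paper does.
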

\begin{proof}
The claim is that
\(
 \kappa(\tensor{A}) = \kappa(\beta \tensor{A})
\)
for all $\beta \in \F\setminus\{0\}$. Note that $\beta\tensor{A} = \alpha^{d}\tensor{A} = f(\alpha\vect{p})$, so $\|\alpha^d\tensor{A}\|/\|\alpha\vect{p}\| = |\alpha|^{d-1} \|\tensor{A}\|/\|\vect{p}\|$. Terracini's matrix corresponding to the points represented by $\vect{q}=\alpha\vect{p}$ is verified to be $T_\vect{q} = \alpha^{d-1} T_\vect{p}$, where $T_\vect{p}$ is Terracini's matrix corresponding to $\vect{p}$. Hence, $\varsigma_N(T_\vect{q})^{-1} = (|\alpha|^{d-1} \varsigma_N(T_\vect{p}))^{-1} = |\alpha|^{1-d}\kappa_A(T_\vect{p})$. This concludes the proof for $\F = \C$. In the real case, whenever $d$ is even, one should exploit for $\beta < 0$ the equality $\beta\tensor{A} = -\alpha^d\tensor{A} = f(\alpha\vect{p}')$, where $\vect{p}'=\operatorname{vecr}(p_1',\ldots,p_r')$ with $p_i' = (-\sten{a}{i}{1},\sten{a}{i}{2},\ldots,\sten{a}{i}{d})$. Then, Terracini's matrix corresponding to $\vect{q}' = \alpha\vect{p}'$ is $T_{\vect{q}'} = \alpha^{d-1}T_\vect{p} S$, where $S$ is a diagonal matrix whose diagonal entries are $\pm1$. Since $S$ is an orthogonal matrix, $\varsigma_N(T_\vect{p}) = \varsigma_N(T_\vect{p} S)$ and thusly is the proof concluded.
\end{proof}

\begin{proposition}
The absolute and relative condition number are orthogonally invariant.
\end{proposition}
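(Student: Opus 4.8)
The plan is to exploit that an orthogonal (for $\F=\R$), respectively unitary (for $\F=\C$), change of basis in each of the $d$ tensor factors is a norm-preserving linear automorphism of $\F^\Pi$ which carries the norm-balanced decomposition of $\tensor{A}$ onto the norm-balanced decomposition of the transformed tensor, and which conjugates Terracini's matrix by orthogonal/unitary matrices. Concretely, fix $Q_k \in \F^{n_k \times n_k}$ with $Q_k^H Q_k = I_{n_k}$ for $k=1,\ldots,d$, put $Q = Q_1 \otimes Q_2 \otimes \cdots \otimes Q_d$ and $\widehat{Q} = \diag(Q_1, Q_2, \ldots, Q_d)$, and let $\tensor{A}'$ be the multilinear multiplication of $\tensor{A}$ by the $Q_k$'s, i.e.\ $\tensor{A}' = Q\tensor{A}$ in coordinates.

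First I would record the elementary invariances. If $\tensor{A} = \sum_{i=1}^r \alpha_i \sten{u}{i}{1}\otimes\cdots\otimes\sten{u}{i}{d}$ is the norm-balanced decomposition with representatives $p_i = (\alpha_i^{1/d}\sten{u}{i}{1},\ldots,\alpha_i^{1/d}\sten{u}{i}{d})$, then since each $Q_k$ is an isometry we have $\|Q_k \sten{u}{i}{k}\|=1$, so $\tensor{A}' = \sum_{i=1}^r \alpha_i (Q_1\sten{u}{i}{1})\otimes\cdots\otimes(Q_d\sten{u}{i}{d})$ is again norm-balanced, with representatives $p_i' = (\alpha_i^{1/d} Q_1\sten{u}{i}{1},\ldots,\alpha_i^{1/d}Q_d\sten{u}{i}{d})$ and vectorized factor matrices $\vect{p}' = (I_r\otimes\widehat{Q})\vect{p}$. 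Since $Q$ and $I_r\otimes\widehat{Q}$ are orthogonal/unitary, $\|\tensor{A}'\| = \|\tensor{A}\|$ and $\|\vect{p}'\| = \|\vect{p}\|$. Moreover $Q$ maps $\Var{S}_\F$ bijectively onto itself (it sends rank-$1$ tensors to rank-$1$ tensors and is invertible), hence preserves $\SecZ{r}{\Var{S}_\F}$ and the regular locus, while $I_r\otimes\widehat{Q}$ centralizes the group $\Var{T}$ (the block-scalar matrices of $\Var{B}$ and the permutation matrices $P\otimes I_{\Sigma+d}$ both commute with $\widehat{Q}$); consequently $f^\dagger(\tensor{A}') = (I_r\otimes\widehat{Q})f^\dagger(\tensor{A})$ and $|f^\dagger(\tensor{A}')/\!\sim| = |f^\dagger(\tensor{A})/\!\sim|$, so $\tensor{A}'$ is robustly $r$-identifiable precisely when $\tensor{A}$ is and its condition number is well-defined exactly when that of $\tensor{A}$ is.

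The substantive step is the behaviour of Terracini's matrix. Writing the $k$-th block of $T_i$ in \refeqn{eqn_terracini} as $\sten{a}{i}{1}\otimes\cdots\otimes\sten{a}{i}{k-1}\otimes I_{n_k}\otimes\sten{a}{i}{k+1}\otimes\cdots\otimes\sten{a}{i}{d}$ and applying the mixed-product rule $(A\otimes B)(C\otimes D) = AC\otimes BD$ twice, one checks that $Q$ times this block equals $(Q_1\sten{a}{i}{1})\otimes\cdots\otimes Q_k\otimes\cdots\otimes(Q_d\sten{a}{i}{d})$, which is exactly the $k$-th block of the Terracini matrix $T_i'$ at $p_i'$ multiplied on the right by $Q_k$. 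Collecting the $d$ blocks gives $Q T_i = T_i' \widehat{Q}$, hence $T_i' = Q T_i \widehat{Q}^{-1}$; stacking over $i=1,\ldots,r$ yields
\[
T_{\vect{p}'} = Q\, T_\vect{p}\, (I_r\otimes\widehat{Q})^{-1}.
\]
Both $Q$ and $I_r\otimes\widehat{Q}$ are orthogonal/unitary, so $T_{\vect{p}'}$ and $T_\vect{p}$ have the same singular values; in particular $\varsigma_N(T_{\vect{p}'}) = \varsigma_N(T_\vect{p})$. Combined with $\|\tensor{A}'\| = \|\tensor{A}\|$ and $\|\vect{p}'\| = \|\vect{p}\|$, this gives $\kappa_A(\tensor{A}') = \varsigma_N(T_{\vect{p}'})^{-1} = \varsigma_N(T_\vect{p})^{-1} = \kappa_A(\tensor{A})$ and $\kappa(\tensor{A}') = \varsigma_N(T_{\vect{p}'})^{-1}\|\tensor{A}'\|/\|\vect{p}'\| = \kappa(\tensor{A})$.

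I do not expect a genuine obstacle; the only delicate point is the Kronecker bookkeeping in the identity $Q T_i = T_i' \widehat{Q}$ — keeping track of which factor each $Q_k$ lands on and justifying the two invocations of the mixed-product rule (reading the off-$k$ column vectors $\sten{a}{i}{j}$ as $n_j\times 1$ matrices) — together with the convention, for $\F=\C$, that ``orthogonal'' is to be read as ``unitary'' throughout, so that $\widehat{Q}^{-1} = \widehat{Q}^H$ and $Q^{-1} = Q^H$.
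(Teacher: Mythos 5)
Your proposal is correct and follows essentially the same route as the paper's proof: both establish $\|\tensor{A}'\|=\|\tensor{A}\|$, $\|\vect{p}'\|=\|\vect{p}\|$, and the identity $Q\,T_\vect{p} = T_{\vect{p}'}\,(I_r\otimes\widehat{Q})$ via the mixed-product rule, then conclude equality of $\varsigma_N$ by orthogonal/unitary invariance of singular values. Your additional verification that norm-balancing and robust $r$-identifiability are preserved under the transformation is a careful touch the paper leaves implicit, but it does not change the argument.
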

\begin{proof}
Let $\vect{p}$ and $\tensor{A}$ be as in \refthm{thm_condition}. The claim is that $\kappa_A$ and $\kappa$ are invariants of the $(O(n_1) \times \cdots \times O(n_d))$-orbit of $\tensor{A}$. In other words, considering $\tensor{A} \in \F^{\Pi}$, the statement is that 
\[
\kappa(\tensor{A}) = \kappa( (Q_1\otimes\cdots\otimes Q_d) \tensor{A})
\]
where $Q_i \in O(n_i)$ are orthogonal matrices in $\F^{n_i \times n_i}$ with respect to the Hermitian inner product. 
As $Q = Q_1\otimes\cdots\otimes Q_d$ is orthogonal, $\|\tensor{A}\|=\|Q\tensor{A}\|$. Since
\[
 Q\tensor{A} = (Q_1 \otimes \cdots \otimes Q_d)\sum_{i=1}^r \sten{a}{i}{1} \otimes \cdots \otimes \sten{a}{i}{d} = \sum_{i=1}^r Q_1\sten{a}{i}{1} \otimes \cdots \otimes Q_d\sten{a}{i}{d},
\]
it follows that $Q\tensor{A} = f(\diag(Q_1,\ldots,Q_d,\ldots,Q_1,\ldots,Q_d) \vect{p}) = f(U\vect{p})$. As $U$ is orthogonal, $\|\vect{p}\| = \|U\vect{p}\|$. Finally, Terracini's matrix corresponding to $\vect{p}' = U\vect{p}$ is given explicitly by
\[
 T_{\vect{p}'} = \begin{bmatrix} T_1' & \cdots & T_r' \end{bmatrix},
 \quad\text{where }
 T_i' = \begin{bmatrix} I\otimes Q_2\sten{a}{i}{2} \otimes \cdots \otimes Q_d\sten{a}{i}{d} & \cdots & Q_1\sten{a}{i}{1}\otimes\cdots\otimes Q_{d-1}\sten{a}{i}{d-1}\otimes I \end{bmatrix}.
\]
Multiplying $T_i'$ on the right by $D = \diag(Q_1,Q_2,\ldots,Q_d)$ results in
\(
T_i' D = Q T_i,
\)
so that 
\(
 Q T_\vect{p} = T_{\vect{p}'} \diag(D,\ldots,D).
\)
Since $Q$ and $\diag(D,\ldots,D)$ are orthogonal, one obtains 
\[
\varsigma_N(T_\vect{p}) = \varsigma_N(Q T_\vect{p}) = \varsigma_N(T_{\vect{p}'} \diag(D,\ldots,D)) = \varsigma_N(T_{\vect{p}'}), 
\]
concluding the proof.
\end{proof}

The condition number of a rank-$1$ tensor has an explicit expression; in fact, the relative condition number depends only on the order $d$ of the tensor, and \emph{decreases} as $d$ increases. As rank-$1$ tensors are robustly $r$-identifiable for all $d\ge2$, the result even applies to matrices.

\begin{proposition}\label{prop_rank1}
A rank-$1$ tensor $\tensor{A} = \alpha \sten{a}{}{1} \otimes \cdots \otimes \sten{a}{ }{d} \in \F^{n_1}\otimes\cdots\otimes\F^{n_d}$ with $\|\sten{a}{ }{k}\|=1$ and $\alpha \in \R_+$ strictly positive has absolute condition number $\kappa_A(\tensor{A}) = \alpha^{1/d-1}$ and relative condition number $\kappa(\tensor{A}) = \sqrt{d^{-1}}$. 
\end{proposition}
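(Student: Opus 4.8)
The plan is to apply \refdef{def_norm_balanced_cond} and diagonalise Terracini's matrix by hand. With $r=1$ the norm-balanced representative of $\tensor{A}$ is $p=(\sten{b}{}{1},\ldots,\sten{b}{}{d})$ with $\sten{b}{}{k}=\alpha^{1/d}\sten{a}{}{k}$, so $\|\sten{b}{}{k}\|^2=\alpha^{2/d}=:\beta$; put $\vect{p}=\vecc{p}$. By \refeqn{eqn_terracini} the matrix $T_\vect{p}$ consists of $d$ blocks, the $k$th being $\sten{b}{}{1}\otimes\cdots\otimes I_{n_k}\otimes\cdots\otimes\sten{b}{}{d}$. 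I would work with the Gram matrix $T_\vect{p}^H T_\vect{p}$, whose $(k,l)$ block I evaluate by factoring the Hermitian inner product on $\F^{\Pi}$ over the tensor modes: it equals $\bigl(\prod_{j\neq k}\|\sten{b}{}{j}\|^2\bigr)I_{n_k}=\beta^{d-1}I_{n_k}$ when $k=l$, and $\bigl(\prod_{j\neq k,l}\|\sten{b}{}{j}\|^2\bigr)\sten{b}{}{k}(\sten{b}{}{l})^H=\beta^{d-1}\sten{a}{}{k}(\sten{a}{}{l})^H$ when $k\neq l$.

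Thus $T_\vect{p}^H T_\vect{p}=\beta^{d-1}G$, where $G=I_{\Sigma+d}-P+\vect{v}\vect{v}^H$ with $\vect{v}=\vecc{(\sten{a}{}{1},\ldots,\sten{a}{}{d})}$ (so $\|\vect{v}\|^2=\sum_k\|\sten{a}{}{k}\|^2=d$) and $P=\diag\bigl(\sten{a}{}{1}(\sten{a}{}{1})^H,\ldots,\sten{a}{}{d}(\sten{a}{}{d})^H\bigr)$ the orthogonal projector onto a subspace $U\subset\F^{\Sigma+d}$ of dimension $d$. The key observation is that $P\vect{v}=\vect{v}$, i.e.\ $\vect{v}\in U$. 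The next step is the spectral decomposition of $G$: on $U^\perp=\ker P$ both $P$ and $\vect{v}\vect{v}^H$ vanish, so $G$ acts as the identity there, contributing the eigenvalue $1$ with multiplicity $(\Sigma+d)-d=\Sigma$; on $U$ one has $P|_U=I_U$, so $G|_U=\vect{v}\vect{v}^H|_U$, contributing the eigenvalue $\|\vect{v}\|^2=d$ once and $0$ with multiplicity $d-1$. Consequently the singular values of $T_\vect{p}$ are $\beta^{(d-1)/2}\sqrt{d}$ (once), $\beta^{(d-1)/2}$ (multiplicity $\Sigma$), and $0$ (multiplicity $d-1$); since $d\ge2$ they are already in decreasing order, so with $N=r(\Sigma+1)=\Sigma+1$ we obtain $\varsigma_N=\beta^{(d-1)/2}=\alpha^{(d-1)/d}$.

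The conclusion then follows by bookkeeping: $\kappa_A=\varsigma_N^{-1}=\alpha^{1/d-1}$, and since $\|\tensor{A}\|=\alpha\prod_k\|\sten{a}{}{k}\|=\alpha$ while $\|\vect{p}\|^2=\sum_{k=1}^d\|\sten{b}{}{k}\|^2=d\,\alpha^{2/d}$, the relative condition number is $\kappa=\kappa_A\|\tensor{A}\|/\|\vect{p}\|=\alpha^{1/d-1}\cdot\alpha/(\sqrt{d}\,\alpha^{1/d})=d^{-1/2}=\sqrt{d^{-1}}$; this also exhibits the claimed independence of $\alpha$ and of $n_1,\ldots,n_d$, and the monotone decrease in $d$. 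I expect the one genuinely delicate point — hence the step I would write out with care — to be the blockwise evaluation of $T_\vect{p}^H T_\vect{p}$ together with the remark $\vect{v}\in\operatorname{range}(P)$, since once those are in place the eigenvalues, and therefore both condition numbers, fall out immediately.
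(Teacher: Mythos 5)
Your proof is correct, and it reaches the same spectrum of Terracini's matrix as the paper --- singular values $\sqrt{d}\,\alpha^{1-1/d}$, then $\alpha^{1-1/d}$ with multiplicity $\Sigma$, then $0$ with multiplicity $d-1$ (cf.\ \refcor{cor_terracini_sings}) --- but by a somewhat different route. The paper first identifies the top singular pair explicitly (bounding $\varsigma_1\le\sqrt d$ via the triangle inequality and exhibiting $\vect{v}_1=\sqrt{d^{-1}}\operatorname{vec}(\sten{a}{}{1},\ldots,\sten{a}{}{d})$ as attaining it), deflates $T_\vect{p}-\sqrt d\,\vect{u}_1\vect{v}_1^H$, and only then uses a Gram-matrix argument, observing that the deflated Gram matrix is block diagonal with projector blocks $(I-\sten{a}{}{k}(\sten{a}{}{k})^H)^2$. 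You instead compute the full Gram matrix $T_\vect{p}^H T_\vect{p}=\beta^{d-1}\bigl(I-P+\vect{v}\vect{v}^H\bigr)$ in one step and diagonalize it by splitting $\F^{\Sigma+d}=U\oplus U^\perp$ with $U=\operatorname{range}(P)$ and $\vect{v}\in U$; the blockwise evaluation of the off-diagonal entries $\beta^{d-1}\sten{a}{}{k}(\sten{a}{}{l})^H$ and the identity $P\vect{v}=\vect{v}$ (using $\|\sten{a}{}{k}\|=1$) are exactly the delicate points you flag, and they check out. Your version is arguably tidier: it avoids both the variational argument for $\varsigma_1$ and the deflation step, and it delivers the entire singular spectrum (hence the corollary) in a single spectral decomposition; the paper's version has the small advantage of exhibiting the extremal singular vectors explicitly, which it reuses in spirit elsewhere. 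The final bookkeeping ($\|\tensor{A}\|=\alpha$, $\|\vect{p}\|=\sqrt d\,\alpha^{1/d}$, norm-balanced scaling $T_{\vect{p}'}=\alpha^{1-1/d}T_\vect{p}$ absorbed into your $\beta^{(d-1)/2}$) agrees with the paper.
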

\begin{proof}
Let $\vect{p}' = \operatorname{vec}(\alpha^{1/d}\sten{a}{}{1}, \alpha^{1/d}\sten{a}{}{2},\ldots,\alpha^{1/d}\sten{a}{}{d})$ and $\vect{p} = \operatorname{vec}(\sten{a}{}{1},\sten{a}{ }{2}, \ldots, \sten{a}{}{d})$.
Since the decomposition is norm-balanced, one verifies that $ T_{\vect{p}'} = \alpha^{1-1/d} T_\vect{p}$. 
Hence, finding the singular values of $T_\vect{p}$ suffices. 
The largest singular value of $T_\vect{p}$ is given by
\begin{align*}
 \varsigma_1(T_\vect{p}) = \max_{\|\vect{v}\| = 1} \|T_\vect{p} \vect{v}\|
 &= \max_{\|\vect{v}\| = 1} \| \sten{v}{ }{1} \otimes \sten{a}{ }{2}\otimes\cdots\otimes \sten{a}{ }{d} + \cdots + \sten{a}{ }{1}\otimes\cdots\otimes\sten{a}{ }{d-1}\otimes\sten{v}{ }{d} \| \\
 &\le \max_{\|\vect{v}\| = 1} \bigl( \| \sten{v}{ }{1} \otimes \sten{a}{ }{2}\otimes\cdots\otimes \sten{a}{ }{d} \|  + \cdots + \| \sten{a}{ }{1}\otimes\cdots\otimes\sten{a}{ }{d-1}\otimes\sten{v}{ }{d} \| \bigr) \\
 &= \max_{\|\vect{v}\| = 1} \bigl( \| \sten{v}{ }{1} \| + \cdots + \| \sten{v}{ }{d} \| \bigr) 
 = \max_{\|\sten{v}{}{1}\|^2 + \cdots + \|\sten{v}{ }{d}\|^2 = 1} \bigl( \| \sten{v}{ }{1} \| + \cdots + \| \sten{v}{ }{d} \| \bigr) \\
 &= \max_{\nu_1^2 + \cdots + \nu_d^2 = 1} ( |\nu_1| + \cdots + |\nu_d| ) 
 = \max_{\|\forcebold{\nu}\|=1} \|\forcebold{\nu}\|_1 \le \sqrt{d} \|\forcebold{\nu}\|_2 = \sqrt{d},
\end{align*}
where $\vect{v}^T = \left[\begin{smallmatrix}(\sten{v}{}{1})^T \, \cdots\, (\sten{v}{ }{d})^T \end{smallmatrix}\right]$ and $\forcebold{\nu} = \left[\begin{smallmatrix}\nu_1 & \cdots & \nu_d\end{smallmatrix}\right]$. If we take $\vect{v}_1 = \sqrt{d^{-1}} \operatorname{vec}(\sten{a}{ }{1},\cdots,\sten{a}{ }{d})$, then it follows that $\|T_\vect{p}\vect{v}\| = \|d\sqrt{d^{-1}}\sten{a}{ }{1}\otimes\cdots\otimes\sten{a}{ }{d}\|= \sqrt{d}$. That is, $\vect{v}_1$ is a right singular vector corresponding to the largest singular value $\varsigma_1(T_\vect{p}) = \sqrt{d}$. Since $T_\vect{p} \vect{v}_1 = \sqrt{d} \sten{a}{ }{1} \otimes \cdots \otimes \sten{a}{ }{d} = \sqrt{d} \vect{u}_1$, it follows that $\vect{u}_1$ is a left singular vector corresponding to $\varsigma_1(T_\vect{p})$. Deflating the largest singular tuple from $T_\vect{p}$, we find
\[
 \widehat{T}_\vect{p} = T_\vect{p} - \sqrt{d} \vect{u}_1 \vect{v}_1^H = 
 \begin{bmatrix}
  (I - \sten{a}{ }{1}(\sten{a}{ }{1})^H) \otimes \sten{a}{ }{2} \otimes \cdots \otimes \sten{a}{ }{d} & \cdots &
  \sten{a}{ }{1} \otimes \cdots \otimes \sten{a}{ }{d-1} \otimes (I - \sten{a}{ }{d} (\sten{a}{ }{d})^H)
 \end{bmatrix}.
\]
The singular values of the above matrix are the square roots of the eigenvalues of its Gram matrix, which has a particularly pleasing structure as it is block diagonal,
\[
 \widehat{T}_\vect{p}^H \widehat{T}_\vect{p} = 
 \diag\Bigl( (I - \sten{a}{ }{1}(\sten{a}{ }{1})^H)^2, (I - \sten{a}{ }{2} (\sten{a}{ }{2})^H)^2, \ldots ( I - \sten{a}{ }{d}(\sten{a}{ }{d})^H )^2 \Bigr),
\]
which is a consequence from the fact that $I - \sten{a}{ }{k}(\sten{a}{ }{k})^H$ is a projector onto the orthogonal complement of $\sten{a}{ }{k}$, so that applying it to the latter yields $0$. Since $I - \sten{a}{ }{k}(\sten{a}{ }{k})^H$ is a projector, it is idempotent, and so the eigenvalues of $\widehat{T}_\vect{p}^H \widehat{T}_\vect{p}$ are the union of the eigenvalues of the $I - \sten{a}{ }{k}(\sten{a}{ }{k})^H$ for $k=1,\ldots,d$. The only possible eigenvalues of a projector are $1$ and $0$; so eigenvalue $1$ has multiplicity equal to the rank of the projector, which is readily verified to be $n_k-1$. Hence, the singular values of $\widehat{T}_\vect{p}$ are $1$ with multiplicity $\Sigma$ and $0$ with multiplicity $d$. Then, the singular values of $T_\vect{p}$ are $\sqrt{d}$, $1$ with multiplicity $\Sigma$, and $0$ with multiplicity $d-1$. We can conclude that the $N = (\Sigma+1)$th singular value of $T_{\vect{p}'}$ is $\alpha^{1-1/d}$, so that the absolute condition number is $\kappa_A = \alpha^{1/d-1}$. Finally,
\[
 \kappa = \kappa_A \frac{\|\alpha \sten{a}{ }{1}\otimes\cdots\otimes\sten{a}{ }{d}\|}{\|\vect{p}'\|} = \alpha^{1/d-1} \cdot \alpha \cdot ( \sqrt{d} \alpha^{1/d} )^{-1} = \sqrt{d^{-1}},
\]
concluding the proof.
\end{proof}

The following result analytically characterizes the singular values of Terracini's matrix corresponding to a norm-balanced rank-$1$ tensor; it is an immediate consequence from the foregoing proof.
\begin{corollary}\label{cor_terracini_sings}
 Let $p = (\sten{a}{ }{1}, \sten{a}{ }{2}, \ldots, \sten{a}{ }{d})$ with $\|\sten{a}{ }{1}\|=\cdots=\|\sten{a}{}{d}\| = \alpha^{1/d} > 0$, and let $\vect{p} = \vecc{p}$. Then, the singular values of Terracini's matrix are 
 \[
  \varsigma( T_\vect{p} ) = \{ \sqrt{d} \alpha^{1-1/d}, \underset{\Sigma}{\underbrace{\alpha^{1-1/d}, \ldots, \alpha^{1-1/d}}}, \underset{d-1}{\underbrace{0, \ldots, 0}} \}.
 \]
\end{corollary}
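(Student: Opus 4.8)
The plan is to reduce the statement directly to the singular-value computation already carried out inside the proof of \refprop{prop_rank1}. First I would normalize: write $\sten{a}{}{k} = \alpha^{1/d}\sten{u}{}{k}$ with $\|\sten{u}{}{k}\| = 1$ for $k = 1, \ldots, d$, and set $\vect{q} = \vecc{(\sten{u}{}{1}, \sten{u}{}{2}, \ldots, \sten{u}{}{d})}$. Since the tensor has rank $1$, Terracini's matrix $T_\vect{p}$ is the single block displayed in \refeqn{eqn_terracini}, and each of its columns is a Kronecker product of exactly $d-1$ of the vectors $\sten{a}{}{1}, \ldots, \sten{a}{}{d}$ with one identity factor inserted. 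Substituting $\sten{a}{}{k} = \alpha^{1/d}\sten{u}{}{k}$ therefore pulls the same scalar $\alpha^{(d-1)/d} = \alpha^{1-1/d}$ out of every column, which gives $T_\vect{p} = \alpha^{1-1/d} T_\vect{q}$. This is precisely the homogeneity identity $T_{\vect{p}'} = \alpha^{1-1/d} T_\vect{p}$ recorded at the opening of the proof of \refprop{prop_rank1}.

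Next I would invoke that proof in the unit-norm case. It shows that $\sqrt{d}$ is the largest singular value of $T_\vect{q}$, with right singular vector $d^{-1/2}\vect{q}$ and left singular vector $\sten{u}{}{1}\otimes\cdots\otimes\sten{u}{}{d}$; after deflating this singular triple, the remaining matrix has a block-diagonal Gram matrix whose $k$th block is the square of the orthogonal projector $I - \sten{u}{}{k}(\sten{u}{}{k})^H$, hence an idempotent of rank $n_k - 1$. Consequently the deflated matrix contributes the singular value $1$ with multiplicity $\sum_{k=1}^d (n_k-1) = \Sigma$ and the value $0$ with multiplicity $d$, so that $\varsigma(T_\vect{q}) = \{\, \sqrt{d},\ \underset{\Sigma}{\underbrace{1, \ldots, 1}},\ \underset{d-1}{\underbrace{0, \ldots, 0}} \,\}$.

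Finally, multiplying every singular value of $T_\vect{q}$ by the positive constant $\alpha^{1-1/d}$ yields the asserted list for $\varsigma(T_\vect{p})$. I do not anticipate a genuine obstacle here: the whole argument is the one-line observation that each Terracini column is homogeneous of degree $d-1$ in the factor vectors, combined with the fact — already made explicit in the proof of \refprop{prop_rank1} — that the entire spectrum of $T_\vect{q}$, and not merely its $N$th entry, has been determined there. The only point requiring a moment of care is bookkeeping the multiplicities: $1 + \Sigma$ nonzero values plus $d-1$ zeros accounts for all $\Sigma + d$ columns of $T_\vect{p}$, consistent with $\operatorname{rank}(T_\vect{p}) = \Sigma + 1$.
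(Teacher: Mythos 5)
Your proposal is correct and follows exactly the paper's route: the paper states the corollary as an immediate consequence of the proof of \refprop{prop_rank1}, which already contains both the homogeneity identity $T_{\vect{p}'}=\alpha^{1-1/d}T_\vect{p}$ and the full spectrum $\{\sqrt{d},1\,(\times\Sigma),0\,(\times(d-1))\}$ in the unit-norm case via the deflation and block-diagonal Gram-matrix argument you cite. Your multiplicity bookkeeping ($1+\Sigma$ nonzero values plus $d-1$ zeros among the $\Sigma+d$ columns) matches the paper's count, so nothing further is needed.
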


Based on the foregoing characterization of the singular values of Terracini's matrix associated with one rank-$1$ tensor, we can determine explicitly the condition number of the class of \emph{weak $k$-orthogonal tensors} with $k\ge3$. Both strong and weak $2$-orthogonality were considered in \cite{VNVM2014} from whence it name derives. A tensor rank decomposition is said to be weak $k$-orthogonal if for every pair of rank-$1$ tensors, there is orthogonality in $k$ factors, however the factors in which orthogonality occurs need not be the same for different pairs. It was shown in \cite{VNVM2014} that a weak $2$-orthogonal tensor rank decomposition is a necessary condition for the existence of a certain generalization of the Eckart--Young theorem for matrices, and a particular subclass of weak $2$-orthogonal tensors, i.e., those with a strong $2$-orthogonal decomposition, in fact admits a tensor-equivalent of the Eckart--Young theorem. In the next result, it is shown that tensors with a weak $3$-orthogonal decomposition admit an easy expression for their condition number.

\begin{proposition}\label{prop_weak3orth}
Let $\alpha_i \in \R_+$ be sorted as $\alpha_1 \ge \alpha_2 \ge \cdots \ge \alpha_r > 0$, and let
\[
 \tensor{A} = \sum_{i=1}^r \alpha_i \sten{v}{i}{1}\otimes \cdots \otimes \sten{v}{i}{d} \quad\text{with } \|\sten{v}{i}{k}\|=1
\]
be a robustly $r$-identifiable weak $3$-orthogonal tensor:
\[
 \forall i < j: \exists 1 \le k_1 < k_2 < k_3 \le d: \langle \sten{v}{i}{k_1}, \sten{v}{j}{k_1} \rangle = \langle \sten{v}{i}{k_2}, \sten{v}{j}{k_2} \rangle = \langle \sten{v}{i}{k_3}, \sten{v}{j}{k_3} \rangle = 0,
\]
where $\langle \cdot,\cdot \rangle$ is the Hermitian inner product.
Then, 
\[
 \kappa_A(\tensor{A}) = \alpha_r^{-1+1/d} 
 \quad\text{and}\quad \kappa(\tensor{A}) = \alpha_r^{-1+1/d} \sqrt{ \sum_{i=1}^r \alpha_i^2 } \Bigg/ \sqrt{ \sum_{i=1}^r d \alpha_i^{2/d} }.
\]
\end{proposition}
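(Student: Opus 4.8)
The plan is to use weak $3$-orthogonality to force Terracini's matrix at the norm-balanced representatives to have a block-diagonal Gram matrix, which reduces the computation of $\varsigma_N$ to the rank-$1$ case already settled in \refcor{cor_terracini_sings}.

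First I would fix the norm-balanced representatives $p_i = (\sten{u}{i}{1},\ldots,\sten{u}{i}{d})$ with $\sten{u}{i}{k} := \alpha_i^{1/d}\sten{v}{i}{k}$, so $\|\sten{u}{i}{k}\| = \alpha_i^{1/d}$, and write $T_\vect{p} = \begin{bmatrix} T_1 & \cdots & T_r \end{bmatrix}$ with $T_i$ the Terracini block \refeqn{eqn_terracini} associated with $p_i$. The key step is the computation of the $(k,\ell)$ sub-block of $T_i^H T_j$: by the mixed-product property of the Kronecker product it equals $\bigl( \prod_{m\in S}\langle \sten{u}{i}{m},\sten{u}{j}{m}\rangle \bigr)$ times a fixed matrix, namely $I_{n_k}$ when $k=\ell$ and $\sten{u}{j}{k}(\sten{u}{i}{\ell})^H$ when $k\neq\ell$, where $S = \{1,\ldots,d\}\setminus\{k,\ell\}$ and hence $|S|\ge d-2\ge 1$. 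For $i\neq j$ there exist three indices $k_1<k_2<k_3$ with $\langle\sten{v}{i}{k_m},\sten{v}{j}{k_m}\rangle = 0$; since $|\{k_1,k_2,k_3\}| = 3 > 2 \ge |\{k,\ell\}|$, at least one $k_m$ lies in $S$, so the scalar factor vanishes and the sub-block is zero. Therefore $T_i^H T_j = 0$ for all $i\neq j$, i.e. $T_\vect{p}^H T_\vect{p} = \diag(T_1^H T_1,\ldots,T_r^H T_r)$.

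It follows that the singular values of $T_\vect{p}$, counted with multiplicity, are the union of those of $T_1,\ldots,T_r$. By \refcor{cor_terracini_sings}, each $T_i$ contributes $\sqrt{d}\,\alpha_i^{1-1/d}$ once, $\alpha_i^{1-1/d}$ with multiplicity $\Sigma$, and $0$ with multiplicity $d-1$; summing over $i$ yields exactly $N = r(\Sigma+1)$ nonzero values and $r(d-1)$ zeros (which also re-establishes the maximal-rank hypothesis). Since $\alpha_1\ge\cdots\ge\alpha_r>0$ and $1-1/d>0$, the smallest nonzero value is $\alpha_r^{1-1/d}$: it is no larger than any $\alpha_i^{1-1/d}$ and strictly smaller than any $\sqrt{d}\,\alpha_i^{1-1/d}$, and it occurs with multiplicity $\Sigma\ge1$. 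Hence $\varsigma_N = \alpha_r^{1-1/d}$ and $\kappa_A = \varsigma_N^{-1} = \alpha_r^{-1+1/d}$.

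For the relative condition number I would compute $\|\vect{p}\|^2 = \sum_{i=1}^r\sum_{k=1}^d \|\sten{u}{i}{k}\|^2 = \sum_{i=1}^r d\,\alpha_i^{2/d}$, and, using that weak $3$-orthogonality makes the rank-$1$ terms pairwise orthogonal in $\F^\Pi$ (their inner product is $\prod_k\langle\sten{v}{i}{k},\sten{v}{j}{k}\rangle = 0$ for $i\neq j$), $\|\tensor{A}\|^2 = \sum_{i=1}^r\alpha_i^2$. Substituting into $\kappa = \kappa_A\,\|\tensor{A}\|/\|\vect{p}\|$ gives the stated expression. The only nontrivial step is the sub-block computation together with the counting argument forcing the off-diagonal Gram blocks to vanish; the rest is bookkeeping.
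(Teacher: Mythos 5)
Your proposal is correct and follows essentially the same route as the paper: weak $3$-orthogonality forces the off-diagonal blocks $T_i^H T_j$ to vanish (since only the two modes carrying identity factors are excluded from the scalar product of inner products), so the Gram matrix of Terracini's matrix is block diagonal, and the singular values reduce via \refcor{cor_terracini_sings} to the rank-$1$ case, with the ordering of the $\alpha_i$ giving $\varsigma_N = \alpha_r^{1-1/d}$ and the norm computations giving the relative condition number. Your slightly more explicit sub-block bookkeeping (treating $k=\ell$ and $k\ne\ell$ separately and noting that the count of nonzero singular values re-establishes the rank hypothesis) is a harmless elaboration of the same argument.
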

\begin{proof}
Let $\vect{p}=\operatorname{vecr}(p_1,\ldots,p_r)$ with $p_i$ the norm-balanced representatives as in \refdef{def_norm_balanced_cond}.
By weak $3$-orthogonality, we have for every $1\le i < j \le r$ that there exist $k_1 < k_2$ so that
\begin{align*}
 (\sten{v}{i}{1} \otimes \cdots \otimes \sten{v}{i}{k_1-1} \otimes I \otimes \sten{v}{i}{k_1+1} \otimes \cdots \otimes \sten{v}{i}{d})^H
 (\sten{v}{j}{1} \otimes \cdots \otimes \sten{v}{j}{k_2-1} \otimes I \otimes \sten{v}{j}{k_2+1} \otimes \cdots \otimes \sten{v}{j}{d})\qquad\qquad& \\
  = \sten{v}{j}{k_1} (\sten{v}{i}{k_2})^H \cdot \prod_{\substack{k=1,\\k\ne k_1, k_2}}^d \langle \sten{v}{i}{k}, \sten{v}{j}{k} \rangle = 0,&
\end{align*}
because the product excludes only $2$ inner products so that there is always one inner product left with orthogonality. Then the foregoing entails that $T_i^H T_j = 0$ for all $i \ne j$, so that
\[
 T_\vect{p}^H T_\vect{p} = \diag( T_1^H T_1, T_2^H T_2, \ldots, T_r^H T_r ),
\]
where $T_i$ is as in \refeqn{eqn_terracini}. The singular values of $T_\vect{p}$ thus coincide with the union of the set of singular values of each of the $T_i$'s. From \refcor{cor_terracini_sings} it follows that 
\[
 \varsigma(T_\vect{p}) = \bigcup_{i=1}^r \{ \sqrt{d} \alpha_i^{1-1/d}, \alpha_i^{1-1/d},\ldots,\alpha_i^{1-1/d}, 0,\ldots,0 \};
\]
the $N$th singular value is thus $\varsigma_N(T_\vect{p}) = \alpha_r^{1-1/d}$ by our assumption on the order of the $\alpha_i$. The norm $\|\tensor{A}\|^2 = \sum_{i=1}^r \alpha_i^2$ because of the orthogonality of the $\sten{v}{i}{1}\otimes\cdots\otimes\sten{v}{i}{d}$'s. The norm of $\vect{p}$ is clear.
\end{proof}

\begin{remark}
The foregoing class of weak $3$-orthogonal tensors includes rank-$1$ tensors and \emph{orthogonally decomposable tensors} (odeco) of order $d\ge3$ \cite{Zhang2001,Kolda2001,Chen2009,Robeva16,BDHR16,VNVM2014}. 
\end{remark}

As the relative condition number of order-$d$ rank-$1$ tensors is $d^{-1/2}$, we can safely call such tensors \emph{perfectly conditioned}: the relative error in the parameters of the decomposition is \emph{less} than the relative error of the tensor. In fact, it is moderately decreasing with the order of the tensor. Recall that the relative condition number is defined in such a way that if $\kappa = 10^{k}$, then approximately $k$ significant digits of accuracy are lost in the parameters of the decompositions $f^\dagger(\tensor{A})$ and $f^\dagger(\tensor{A}+\Delta\tensor{A})$ relative to the error between the tensors $\tensor{A}$ and $\tensor{A}+\Delta\tensor{A}$ for sufficiently small $\|\Delta\tensor{A}\|$. Whenever $\log_{10}(\kappa)$ is small relative to $-\log_{10}(\|\Delta\tensor{A}\|/\|\tensor{A}\|)$, one may call $\tensor{A}$ \emph{well-conditioned}. If the former is large relative to the former, then $\tensor{A}$ is called \emph{ill-conditioned}. If $\kappa \|\Delta\tensor{A}\| > \|\tensor{A}\|$, then all significant digits may be lost in the output $f^\dagger(\tensor{A})$. 
The next result bounds the relative condition number of any robustly $r$-identifiable tensor from below.

\begin{proposition}
The relative condition number of robustly $r$-identifiable tensors in $\F^{n_1} \otimes \cdots \otimes \F^{n_d}$ is bounded from below by $d^{-1}$. 
\end{proposition}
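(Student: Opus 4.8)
The plan is to bound the largest singular value $\varsigma_1$ of the norm-balanced Terracini matrix $T_{\vect{p}}$ from above by $\sqrt{d}\,\|\tensor{A}\|^{1/d}\cdot(\text{something})$ — more precisely, to relate $\|\tensor{A}\|$ and $\|\vect{p}\|$ so that $\varsigma_N^{-1}\|\tensor{A}\|/\|\vect{p}\| \ge d^{-1}$ follows from $\varsigma_N \le \varsigma_1$ together with an upper bound on $\varsigma_1$ in terms of $\|\vect{p}\|$. The cleanest route: first show that the right singular vector direction $\vect{v} = \tfrac1{\sqrt{r}}\operatorname{vecr}(\tfrac1d p_1,\ldots,\tfrac1d p_r)$-type construction, or rather the observation that for norm-balanced representatives $\|\vect{p}\|^2 = d\sum_i \alpha_i^{2/d}$, gives a handle on $\|\vect{p}\|$. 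Then I would bound $\varsigma_1(T_{\vect{p}})$ from above by applying the triangle inequality to $T_{\vect{p}}\vect{v}$ exactly as in the proof of \refprop{prop_rank1}, but now summed over the $r$ rank-one blocks: for $\vect{v}$ with $\|\vect{v}\|=1$, writing $\vect{v} = \operatorname{vecr}(\omega_1,\ldots,\omega_r)$ with $\omega_i = (\sten{v}{i}{1},\ldots,\sten{v}{i}{d})$, one gets
\[
\|T_{\vect{p}}\vect{v}\| \le \sum_{i=1}^r \sum_{k=1}^d \|\sten{a}{i}{1}\|\cdots\widehat{\|\sten{a}{i}{k}\|}\cdots\|\sten{a}{i}{d}\|\cdot\|\sten{v}{i}{k}\| = \sum_{i=1}^r \alpha_i^{(d-1)/d}\sum_{k=1}^d \|\sten{v}{i}{k}\|,
\]
using that each $\|\sten{a}{i}{k}\| = \alpha_i^{1/d}$ in the norm-balanced case.

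Next I would maximize the right-hand side over $\|\vect{v}\|=1$. By Cauchy--Schwarz in the index $i$ and then within each block (as in \refprop{prop_rank1}, $\sum_{k}\|\sten{v}{i}{k}\| \le \sqrt{d}\,\|\omega_i\|$), this yields $\varsigma_1(T_{\vect{p}}) \le \sqrt{d}\,\bigl(\sum_i \alpha_i^{2(d-1)/d}\bigr)^{1/2}$. Hmm — but I want the bound in terms of $\|\tensor{A}\|$ and $\|\vect{p}\|$ directly, not $\sum_i\alpha_i^{2(d-1)/d}$, so instead of going through $\varsigma_1$ it is cleaner to note the exact identity $T_{\vect{p}}\vect{v}_\ast = d^{-1}\tensor{A}$-type relation: take the specific unit vector $\vect{v}_\ast = \|\vect{p}\|^{-1}\vect{p}$ (since $f$ is homogeneous of degree $d$, Euler's identity gives $T_{\vect{p}}\vect{p} = d\,f(\vect{p}) = d\tensor{A}$). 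Then $\varsigma_1(T_{\vect{p}}) \ge \|T_{\vect{p}}\vect{v}_\ast\| = d\|\tensor{A}\|/\|\vect{p}\|$. Combined with $\varsigma_N \le \varsigma_1$ this gives $\varsigma_N \le \varsigma_1$, which is the wrong direction. So the argument must instead go: $\varsigma_N^{-1} \ge \varsigma_1^{-1}$, and we need a \emph{lower} bound on $\varsigma_1^{-1}$, equivalently an \emph{upper} bound on $\varsigma_1$ — but Euler gives only a lower bound on $\varsigma_1$. The resolution is to observe that the vector $\vect{v}_\ast = \|\vect{p}\|^{-1}\vect{p}$ is in fact orthogonal to $\cspan(K)$ — one checks $K_i^H \vecc{p_i} = \|\sten{a}{i}{1}\|^2\vect{1} - \operatorname{diag}(\|\sten{a}{i}{2}\|^2,\ldots)\vect{1} = 0$ precisely because the representative is norm-balanced, so $\|\sten{a}{i}{k}\|$ is independent of $k$. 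Hence $\vect{v}_\ast \perp \ker T_{\vect{p}}$, and by the Courant--Fischer characterization of $\varsigma_N$ as $\min_{\vect{v}\perp\ker T_{\vect{p}},\,\|\vect{v}\|=1}\|T_{\vect{p}}\vect{v}\|$, we get $\varsigma_N \le \|T_{\vect{p}}\vect{v}_\ast\| = d\|\tensor{A}\|/\|\vect{p}\|$. Therefore $\kappa = \varsigma_N^{-1}\|\tensor{A}\|/\|\vect{p}\| \ge \bigl(d\|\tensor{A}\|/\|\vect{p}\|\bigr)^{-1}\cdot\|\tensor{A}\|/\|\vect{p}\| = d^{-1}$, which is exactly the claim.

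The main obstacle — and the only genuinely substantive point — is verifying that $\vect{p}$ (the norm-balanced representative) lies in $\cspan(K)^\perp$; this is where norm-balancing is essential and is what makes the Courant--Fischer min over the orthogonal complement of the kernel apply. Once that is in hand, the rest is Euler's homogeneity identity $T_{\vect{p}}\vect{p} = d\,f(\vect{p})$ and elementary manipulation. I would also note in passing that equality $\kappa = d^{-1}$ would require $\vect{p}$ itself to be a least (nonzero) singular vector of $T_{\vect{p}}$, which for $r=1$ happens only when all other singular values coincide with $\varsigma_N$ — consistent with \refprop{prop_rank1} giving $\kappa = d^{-1/2} > d^{-1}$ strictly — so the bound is not tight, matching the phrasing ``bounded from below by $d^{-1}$.''
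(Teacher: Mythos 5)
Your final argument is exactly the paper's proof: Euler's homogeneity identity $T_\vect{p}\vect{p} = d\,f(\vect{p}) = d\tensor{A}$, the observation that norm-balancing forces $K_i^H\vecc{p_i}=0$ so that $\vect{p}\in\cspan(K)^\perp$, and the variational characterization of $\varsigma_N$ over the orthogonal complement of the kernel, yielding $\kappa \ge d^{-1}$. The initial detour through upper-bounding $\varsigma_1$ is discarded by you anyway, so apart from that the proposal is correct and essentially identical in approach to the paper.
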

\begin{proof}
 Let $\vect{p}$ and $\tensor{A}$ be as in \refthm{thm_condition}. Then,
 \[
  \kappa(\tensor{A}) = \varsigma_N(T_\vect{p})^{-1} \frac{\|\tensor{A}\|}{\|\vect{p}\|} = \varsigma_N(T_\vect{p})^{-1} \frac{\|d^{-1} T_\vect{p} \vect{p}\|}{\|\vect{p}\|} = d^{-1} \varsigma_N(T_\vect{p})^{-1} \frac{\|T_\vect{p} \vect{p}\|}{\|\vect{p}\|}.
 \]
Let $K$ be as in \refeqn{eqn_terracini_kernel}. Then by assumption on the rank of $T_\vect{p}$, $K K^\dagger = K(K^H K)^{-1} K^H$ is a projector onto the kernel of $T_\vect{p}$. Let $\vect{p}_i^T = \left[\begin{smallmatrix} (\sten{a}{i}{1})^T & \cdots & (\sten{a}{i}{d})^T \end{smallmatrix}\right]$, and then it follows that $K_i^H \vect{p}_i = 0$. Hence, $K^H \vect{p} = 0 = K K^\dagger \vect{p}$. So $\vect{p}$ is contained in $\cspan(K)^\perp$. Therefore, $\|T_\vect{p} \vect{p}\|/\|\vect{p}\| \ge \varsigma_N(T_\vect{p})$ and the result follows.
\end{proof}

\begin{remark}
It is unknown to me whether this lower bound may be sharp, because it is neither sharp for rank-$1$ tensors nor for weak $3$-orthogonal tensors with all $\alpha_i = \alpha \ne 0$. I cannot presently think of tensors that would be even more well-behaved than completely orthogonal tensors all of whose terms are of equal norm; these tensors have condition number $\sqrt{d^{-1}}$.
\end{remark}

%
%
%
%
\section{Numerical examples} \label{sec_numerical_experiments}
In this section, some experiments are performed with the basic Matlab/Octave implementation of the proposed algorithm for computing the norm-balanced condition number that is included in the ancillary files accompanying this paper. 
All of the experiments were performed using Matlab R2015a on a computer system consisting of an Intel Core i7-5600U CPU, clocked at 2.6GHz, and $8$GB of main memory. Some of the experiments employed Tensorlab v2.02 \cite{Tensorlab} for computing approximate tensor rank decompositions.

\subsection{The Iterated Scaling Lemma}\label{sec_illustration_main_thm}
Let us start by experimentally investigating \reflem{lem_isl}. It claims that a perturbation of the vectorized factor matrices $\vect{p}$ in the direction of the kernel of Terracini's matrix $T_\vect{p}$ may be replaced with a new set of vectorized factor matrices representing the same tensor plus a much smaller perturbation perpendicular to the kernel of $T_\vect{p}$. For simplicity, take the following rank-$2$ tensor of size $3 \times 3 \times 2$, which is robustly $r$-identifiable by Kruskal's theorem:
\begin{verbatim}
A = [[2 0]; [-1 1]; [0 2]];  B = [[-1 -2]; [2 0]; [0 1]];  C = [[1 -2]; [2 1]];
\end{verbatim}
These factor matrices are norm-balanced, and the corresponding Terracini's matrix can be constructed with the \texttt{cpdcond} algorithm that is included in the ancillary files:
\begin{verbatim}
[k,F,Tr] = cpdcond({A,B,C});
\end{verbatim}
The relative condition number is approximately $\kappa = 0.769$. Next, a random perturbation of unit norm in the direction of the kernel of \texttt{Tr} is constructed as follows:
\begin{verbatim}
[~,~,V] = svd(Tr); v = orth( V(:,13:16)*randn(4,1) );
\end{verbatim}
Now, the vectorized factor matrices are represented by the vector
\[
 \vect{p}^T = 
 \left[\begin{array}{cccccccccccccccc}
  2 & -1 & 0 & -1 & 2 & 0 & 1 & 2 & 0 & 1 & 2 & -2 & 0 & 1 & -2 & 1
 \end{array}\right].
\]
Applying the algorithm in Part I of the proof of the Iterated Scaling Lemma is accomplished as follows:
\begin{verbatim}
nabla = 1e-2*v; [q, delta] = isl(p, nabla, [3 3 2], 2)
q =                            delta =
   1.999142573254893e+00         -1.058731468865185e-06
  -9.995712866274463e-01          5.293657344325927e-07
                       0          2.132104856281811e-22
  -1.001419523209202e+00          5.293657344325930e-07
   2.002839046418403e+00         -1.058731468865186e-06
                       0                              0
   9.990107782605554e-01         -5.293657344325924e-07
   1.998021556521111e+00         -1.058731468865185e-06
                       0                              0
   1.000440759343655e+00         -2.804966567016011e-06
   2.000881518687309e+00         -5.609933134032022e-06
  -1.993817021879831e+00          5.609933134032020e-06
                       0                              0
   9.969085109399156e-01         -2.804966567016010e-06
  -2.005318289232261e+00          5.609933134032017e-06
   1.002659144616131e+00         -2.804966567016008e-06
\end{verbatim}
Notice that the relative difference between \verb|p| and \verb|q| is of the order $10^{-3}$. The vector \verb|delta| is of norm approximately equal to $1.1055 \cdot 10^{-5}$. According to \reflem{lem_isl}, the norm of this vector should be of the order $10^{-4} = \|10^{-2} \vect{v}\|^2$, which is indeed the case. In addition, the Iterated Scaling Lemma claims that \verb|delta| should be perpendicular to the kernel of Terracini's matrix \verb|Tr|. Computing \verb|norm(V(:,13:16)'*delta)| one finds approximately $4.2999 \cdot 10^{-21}$, which is a fairly good approximation to zero. Finally, \reflem{lem_isl} states that the vector \verb|q| should correspond with a different representative of the same rank-$2$ decomposition as represented by the vectorized factor matrices \verb|p|. We can test this property by computing the norm of the difference between the individual rank-$1$ tensors represented respectively by \verb|q| and \verb|p|:
\begin{verbatim}
norm( kron(q(1:3),kron(q(4:6),q(7:8))) - kron(A(:,1),kron(B(:,1),C(:,1))) )
ans =
     0
norm( kron(q(9:11),kron(q(12:14),q(15:16))) - kron(A(:,2),kron(B(:,2),C(:,2))) )
ans =
     1.241267076623637e-15
\end{verbatim}

\begin{figure}
 \caption{Evolution of $\|\forcebold{\nabla}^{(k)}\|$ in function of the iteration number, for different initial values of $\|\forcebold{\nabla}^{(1)}\| = 10^{-q}$.}
 \label{fig_convergence_isl}
\begingroup
  \makeatletter
  \providecommand\color[2][]{%
    \GenericError{(gnuplot) \space\space\space\@spaces}{%
      Package color not loaded in conjunction with
      terminal option `colourtext'%
    }{See the gnuplot documentation for explanation.%
    }{Either use 'blacktext' in gnuplot or load the package
      color.sty in LaTeX.}%
    \renewcommand\color[2][]{}%
  }%
  \providecommand\includegraphics[2][]{%
    \GenericError{(gnuplot) \space\space\space\@spaces}{%
      Package graphicx or graphics not loaded%
    }{See the gnuplot documentation for explanation.%
    }{The gnuplot epslatex terminal needs graphicx.sty or graphics.sty.}%
    \renewcommand\includegraphics[2][]{}%
  }%
  \providecommand\rotatebox[2]{#2}%
  \@ifundefined{ifGPcolor}{%
    \newif\ifGPcolor
    \GPcolortrue
  }{}%
  \@ifundefined{ifGPblacktext}{%
    \newif\ifGPblacktext
    \GPblacktexttrue
  }{}%
  \let\gplgaddtomacro\g@addto@macro
  \gdef\gplbacktext{}%
  \gdef\gplfronttext{}%
  \makeatother
  \ifGPblacktext
    \def\colorrgb#1{}%
    \def\colorgray#1{}%
  \else
    \ifGPcolor
      \def\colorrgb#1{\color[rgb]{#1}}%
      \def\colorgray#1{\color[gray]{#1}}%
      \expandafter\def\csname LTw\endcsname{\color{white}}%
      \expandafter\def\csname LTb\endcsname{\color{black}}%
      \expandafter\def\csname LTa\endcsname{\color{black}}%
      \expandafter\def\csname LT0\endcsname{\color[rgb]{1,0,0}}%
      \expandafter\def\csname LT1\endcsname{\color[rgb]{0,1,0}}%
      \expandafter\def\csname LT2\endcsname{\color[rgb]{0,0,1}}%
      \expandafter\def\csname LT3\endcsname{\color[rgb]{1,0,1}}%
      \expandafter\def\csname LT4\endcsname{\color[rgb]{0,1,1}}%
      \expandafter\def\csname LT5\endcsname{\color[rgb]{1,1,0}}%
      \expandafter\def\csname LT6\endcsname{\color[rgb]{0,0,0}}%
      \expandafter\def\csname LT7\endcsname{\color[rgb]{1,0.3,0}}%
      \expandafter\def\csname LT8\endcsname{\color[rgb]{0.5,0.5,0.5}}%
    \else
      \def\colorrgb#1{\color{black}}%
      \def\colorgray#1{\color[gray]{#1}}%
      \expandafter\def\csname LTw\endcsname{\color{white}}%
      \expandafter\def\csname LTb\endcsname{\color{black}}%
      \expandafter\def\csname LTa\endcsname{\color{black}}%
      \expandafter\def\csname LT0\endcsname{\color{black}}%
      \expandafter\def\csname LT1\endcsname{\color{black}}%
      \expandafter\def\csname LT2\endcsname{\color{black}}%
      \expandafter\def\csname LT3\endcsname{\color{black}}%
      \expandafter\def\csname LT4\endcsname{\color{black}}%
      \expandafter\def\csname LT5\endcsname{\color{black}}%
      \expandafter\def\csname LT6\endcsname{\color{black}}%
      \expandafter\def\csname LT7\endcsname{\color{black}}%
      \expandafter\def\csname LT8\endcsname{\color{black}}%
    \fi
  \fi
  \setlength{\unitlength}{0.0500bp}%
  \scalebox{0.90}{
  \begin{picture}(8640.00,3528.00)%
    \gplgaddtomacro\gplbacktext{%
      \csname LTb\endcsname%
      \put(990,704){\makebox(0,0)[r]{\strut{} $10^{-70}$}}%
      \csname LTb\endcsname%
      \put(990,1070){\makebox(0,0)[r]{\strut{} $10^{-60}$}}%
      \csname LTb\endcsname%
      \put(990,1435){\makebox(0,0)[r]{\strut{} $10^{-50}$}}%
      \csname LTb\endcsname%
      \put(990,1801){\makebox(0,0)[r]{\strut{} $10^{-40}$}}%
      \csname LTb\endcsname%
      \put(990,2166){\makebox(0,0)[r]{\strut{} $10^{-30}$}}%
      \csname LTb\endcsname%
      \put(990,2532){\makebox(0,0)[r]{\strut{} $10^{-20}$}}%
      \csname LTb\endcsname%
      \put(990,2897){\makebox(0,0)[r]{\strut{} $10^{-10}$}}%
      \csname LTb\endcsname%
      \put(990,3263){\makebox(0,0)[r]{\strut{} $1$}}%
      \csname LTb\endcsname%
      \put(1122,484){\makebox(0,0){\strut{} 0}}%
      \csname LTb\endcsname%
      \put(1913,484){\makebox(0,0){\strut{} 1}}%
      \csname LTb\endcsname%
      \put(2704,484){\makebox(0,0){\strut{} 2}}%
      \csname LTb\endcsname%
      \put(3495,484){\makebox(0,0){\strut{} 3}}%
      \csname LTb\endcsname%
      \put(4286,484){\makebox(0,0){\strut{} 4}}%
      \csname LTb\endcsname%
      \put(5078,484){\makebox(0,0){\strut{} 5}}%
      \csname LTb\endcsname%
      \put(5869,484){\makebox(0,0){\strut{} 6}}%
      \csname LTb\endcsname%
      \put(6660,484){\makebox(0,0){\strut{} 7}}%
      \csname LTb\endcsname%
      \put(7451,484){\makebox(0,0){\strut{} 8}}%
      \csname LTb\endcsname%
      \put(8242,484){\makebox(0,0){\strut{} 9}}%
      \put(4682,154){\makebox(0,0){\strut{}$k$}}%
    }%
    \gplgaddtomacro\gplfronttext{%
      \csname LTb\endcsname%
      \put(1914,1757){\makebox(0,0)[r]{\strut{}$q = 1$}}%
      \csname LTb\endcsname%
      \put(1914,1537){\makebox(0,0)[r]{\strut{}$q = 2$}}%
      \csname LTb\endcsname%
      \put(1914,1317){\makebox(0,0)[r]{\strut{}$q = 3$}}%
      \csname LTb\endcsname%
      \put(1914,1097){\makebox(0,0)[r]{\strut{}$q = 4$}}%
      \csname LTb\endcsname%
      \put(1914,877){\makebox(0,0)[r]{\strut{}$q = 5$}}%
    }%
    \gplbacktext
    \put(0,0){\includegraphics{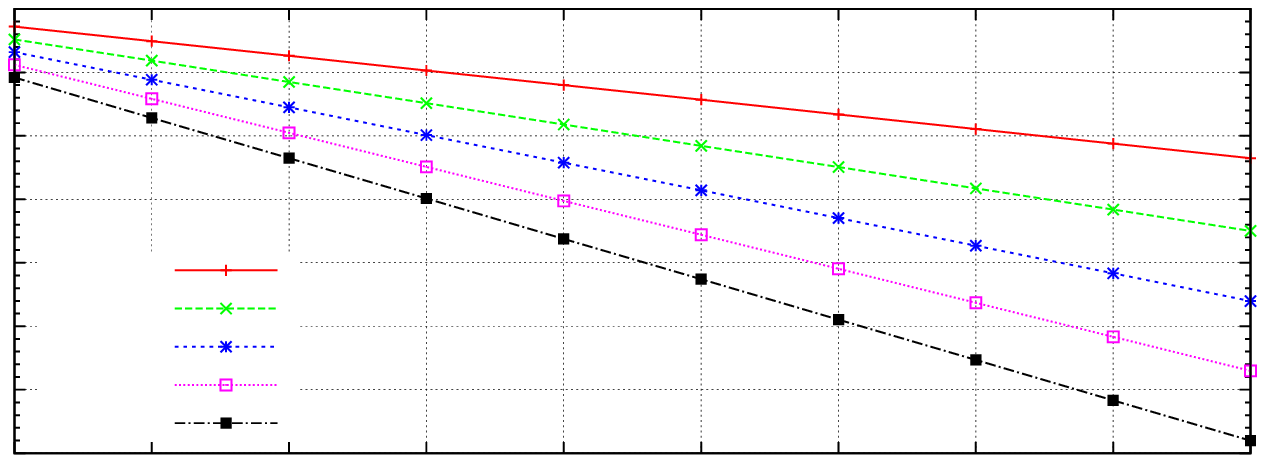}}%
    \gplfronttext
  \end{picture}%
  }
\endgroup
\end{figure}

As a final experiment involving the Iterated Scaling Lemma, I present in \reffig{fig_convergence_isl} the value of $\|\forcebold{\nabla}^{(k)}\|$ in the proof of \reflem{lem_isl} for the perturbation vector \verb|nabla = 10^(-q)*v| whereby $q = 1,2, \ldots, 5$. The figure illustrates the linear convergence claimed in \refeqn{eqn_isl_proof_h1}. Matlab's variable precision arithmetic capabilities using $100$ digits of accuracy were employed for generating this figure.

\subsection{The main theorem}
For illustrating \refthm{thm_condition}, I propose investigating the quantity on the left hand side of
\begin{align}\label{eqn_proxy_cond}
 \frac{\|\vect{p} - \widehat{\vect{p}}\|/\|\vect{p}\|}{\|f(\vect{p})-f(\widehat{\vect{p}})\|/\|f(\vect{p})\|} \ge \frac{d(\vect{p}, \widehat{\vect{p}}) / \|\vect{p}\|}{\|f(\vect{p})-f(\widehat{\vect{p}})\|/\|f(\vect{p})\|}
\end{align}
as a proxy for the right hand side. We may estimate the condition number $\kappa$ by generating a very large number of vectors $\widehat{\vect{p}}$ such that $\|f(\vect{p})-f(\widehat{\vect{p}})\| \le \varepsilon$, for some small value of $\varepsilon$, and then taking the maximum value. For instance, let us generate one random positive rank-$2$ decomposition in $\R^{3 \times 3 \times 2}$ as follows:
\begin{verbatim}
A = rand(3,2); B = rand(3,2); C = rand(2,2);
[c,F,~] = cpdcond({A,B,C}); 
\end{verbatim}
In my experiment,
\begin{verbatim}
F{1} =                        F{2} =                        F{3} = 
 5.1518e-01  8.8821e-01        1.9032e-01  7.5082e-01        7.2302e-01  6.9447e-01
 4.9802e-01  3.6941e-01        5.4218e-01  1.6653e-01        4.9879e-01  6.7487e-01
 5.0806e-01  1.1117e-01        6.6436e-01  5.8845e-01
\end{verbatim}
and the relative norm-balanced condition number was $\kappa \approx 18.410308$. Then, the factor matrices \verb|F| were perturbed randomly as follows
\begin{verbatim}
Fp = arrayfun(... 
   @(k) F{k}+1e-4*norm(F{k},'fro')*rand(size(F{k})), 1:3, 'UniformOutput', false ...
 );
\end{verbatim}
Starting from these factor matrices, I instructed the \verb|cpd_nls| algorithm from Tensorlab to compute an approximate tensor rank decomposition of the tensor $f(\vect{p})$, where $\vect{p}$ are the vectorized factor matrices corresponding to \verb|F|. Let $\widehat{\vect{p}}$ denote the set of norm-balanced vectorized factor matrices obtained from a call to \verb|cpd_nls|. The forgoing computation is repeated $10$ million times; for every instance where $\|f(\vect{p})-f(\widehat{\vect{p}})\| \le 10^{-14} \cdot \|f(\vect{p})\|$, the value on the left-hand side of \refeqn{eqn_proxy_cond} is computed. This resulted in $9,816,827$ valid samples. Taking the maximum value over all the valid samples yielded approximately $10.437311$ as estimate for the condition number. Given the very large number of trials, this is poor approximation to $\kappa$. In fact, the mean estimate of the condition number over all trials was only $0.8667234$. These results seem to suggest that most perturbations---in this example---are actually very good for the tensor rank decomposition problem, because the parameters $\vect{p}$ change relatively less than the tensor. 
However, if we apply the worst possible perturbation from Part III of the proof of \refthm{thm_condition} by executing the following code\footnote{The \texttt{flatten} function is also included in the ancillary files and essentially implements the $\operatorname{vecr}$ map.}
\begin{verbatim}
[~,F,Tr] = cpdcond({A,B,C});
[~,~,V] = svd(Tr); v = V(:,2*(3+3+2-2));
Fbad = {...
  F{1}+1e-8*[v(1:3) v(9:11)], ...
  F{2}+1e-8*[v(4:6) v(12:14)], ...
  F{3}+1e-8*[v(7:8) v(15:16)] ...
};
[~,Fbad,~] = cpdcond(Fbad);
p = flatten(F); pp = flatten(Fbad); T = cpdgen(F);
goodCondApp = (frob(pp-p)/frob(p)) / (frob(T - cpdgen(Fbad))/frob(T))
\end{verbatim}
then we get a substantially more accurate estimate of the condition number, namely $18.410307$---a relative difference of only about $5\cdot10^{-8}$ with respect to the true condition number.

\subsection{The elementary properties}
Let us numerically check \refprop{prop_rank1}. The condition numbers of rank-$1$ tensors of successively higher order behave as $\sqrt{d^{-1}}$, as evidenced by the next experiment:
\begin{verbatim}
a = orth(randn(2,1)); b = orth(randn(3,1)); c = orth(randn(4,1));
Conds = zeros(8,3);
for k = 3 : 10
  Conds(k-2,1) = cpdcond(arrayfun(@(x) 1*a, 1:k, 'UniformOutput', false));
  Conds(k-2,2) = cpdcond(arrayfun(@(x) 2*b, 1:k, 'UniformOutput', false));
  Conds(k-2,3) = cpdcond(arrayfun(@(x) 3*c, 1:k, 'UniformOutput', false));
end
norm( Conds.^(-2) - (3:10)'*ones(1,3) )
\end{verbatim}
Matlab's output was \verb|1.0272e-11|, illustrating that the relative condition number of a rank-$1$ tensor indeed only depends on the order of the tensor and, in particular, does not depend on the dimensions $n_i$ of the tensor and its norm.

From the expression in \refprop{prop_weak3orth} it is not difficult to derive that ill-conditioned weak $3$-orthogonal tensors can be constructed in two principal ways, namely either by letting $\alpha_r \to 0$ while holding the other $\alpha_i$'s constant, or by letting $\alpha_1 \to \infty$ while keeping the others constant. Observe that these two strategies are equivalent because the relative condition number is invariant under scaling. Consider for example the following weak $3$-orthogonal (and odeco) tensors
\[
 \tensor{A}_s = 10^{-s} \sten{u}{ }{1} \otimes \sten{u}{ }{2} \otimes \sten{u}{ }{3} + \sten{v}{ }{1} \otimes \sten{v}{ }{2} \otimes \sten{v}{ }{3},
\]
where $\langle \sten{u}{ }{j}, \sten{v}{ }{j} \rangle = 0$ for $j = 1,2,3$. The relative condition numbers for successive values of $s$ are
\[\small{}
\begin{array}{ccccccccccccccccc}
\toprule
s & 0 & 1 & 2 & 3 & 4 & 5 & 6 & 7 & 8 & 9 & 10 & 11 & 12 & 13 & 14 & 15 \\
\midrule
\log_{10}(\kappa(\tensor{A}_s))
 & -0.2 & 0.4 & 1.1 & 1.8 & 2.4 & 3.1 & 3.8 & 4.4 & 5.1 & 5.8 & 6.4 & 7.1 & 7.8 & 8.4 & 9.1 & 9.8 \\
\bottomrule
\end{array}
\]
Note that $\log_{10}(\kappa(\tensor{A}_s)) \approx -0.2 + 2s/3$ as can be understood by applying \refprop{prop_weak3orth} to this example. 

\subsection{An ill-conditioned example}\label{sec_illcond_example}
Consider the following sequence of tensors
\begin{align}\label{eqn_bad_sequence}
\tensor{A}_s = f(\vect{p}_s) = \sten{a}{1}{1} \otimes \sten{a}{1}{2} \otimes (\vect{x} + 2^{-s} \sten{a}{1}{3}) + \sten{a}{2}{1}\otimes\sten{a}{2}{2}\otimes(\vect{x} + 2^{-s} \sten{a}{2}{3}),
\end{align}
where $\sten{a}{i}{k} \in \F^{n_k}$ are random vectors sampled from a Gaussian distribution, $\vect{x} \in \F^{n_3}$ is any vector, and $\vect{p}_s$ are the norm-balanced vectorized factor matrices. Every tensor $\tensor{A}_s$ on this sequence is robustly $2$-identifiable by Kruskal's theorem with probability $1$. However, 
\[
 \lim_{s\to\infty} \tensor{A}_s = (\sten{a}{1}{1} \otimes \sten{a}{1}{2} + \sten{a}{2}{1}\otimes\sten{a}{2}{2}) \otimes \vect{x}
\]
which is not a $2$-identifiable tensor because 
\[
 (\sten{x}{1}{1} \otimes \sten{x}{1}{2} + \sten{x}{2}{1}\otimes\sten{x}{2}{2})\otimes \vect{x} \quad\text{with}\quad
 \begin{bmatrix}
  \sten{x}{1}{1} & \sten{x}{2}{1}
 \end{bmatrix} = 
 \begin{bmatrix}
  \sten{a}{1}{1} & \sten{a}{2}{1}
 \end{bmatrix} Z 
 \;\text{ and }\;
 \begin{bmatrix}
  \sten{x}{1}{2} & \sten{x}{2}{2}
 \end{bmatrix} =
 \begin{bmatrix}
  \sten{a}{1}{2} & \sten{a}{2}{2}
 \end{bmatrix} Z^{-1}
\]
for any invertible $Z \in \F^{2 \times 2}$ is an alternative decomposition. Hence, $\tensor{A}_\infty$ has infinitely many decompositions of length $2$, yet it is also the limit of a sequence of $2$-identifiable tensors.\footnote{There is nothing special about the coefficient $2^{-s}$ in front of $\sten{a}{i}{3}$; one could equally well have replaced it with any decreasing function of $s$---even different coefficients in front of $\sten{a}{1}{3}$ and $\sten{a}{2}{3}$ would yield the same type of behavior, i.e., a sequence of $2$-identifiable tensors converging to a tensor with an infinite number of decompositions.}

Because $\tensor{A}_s$ is $2$-identifiable and its rank is not larger than any of its multilinear ranks, it follows that a direct decomposition algorithm such as those in \cite{LRA1993,dL2006,DdL2014,DdL2015b} may be employed for computing $\tensor{A}_s$'s decomposition. 
Let $\widehat{\vect{p}}_s \in f^\dagger(\tensor{A}_s)$ be the norm-balanced set of vectorized factor matrices that were computed by some numerical algorithm for solving the tensor rank decomposition problem (with rank equal to $2$ in this case). Note that the numerical algorithm thus computed the decomposition of a nearby tensor $\widehat{\tensor{A}}_s = f(\widehat{\vect{p}}_s)$. The relative backward error is then defined as $\|\tensor{A}_s - \widehat{\tensor{A}}_s\|/\|\tensor{A}_s\|$; it is a measure of how close the rank decomposition problem that was solved by the numerical algorithm was to the true rank decomposition problem. Whenever the backward error of an algorithm is always of the order of the machine precision, then the algorithm is called \emph{backward stable}.
As mentioned in the introduction, since one does not know the true solution in a practical setting, the relative forward error $d(\vect{p}_s, f^\dagger(\tensor{A}_s)) \| / \|\vect{p}_s\|$---which is I believe to be the quantity of interest in any data-analysis application where the individual rank-$1$ tensors appearing in the tensor rank decomposition are to be interpreted---cannot be evaluated. However, the condition number proposed in this paper allows us to (asymptotically) bound the relative forward error from above, because of the relation
\begin{align}\label{eqn_forward_bound}
 \frac{d(\vect{p}_s, \widehat{\vect{p}}_s)}{\|\vect{p}_s\|} \lesssim \kappa(\tensor{A}_s) \cdot \frac{\| \tensor{A}_s - \widehat{\tensor{A}}_s\|}{\|\tensor{A}_s\|}.
\end{align}
In the present experiment, we will investigate these four quantities, i.e., the relative forward error, the condition number, the relative backward error, and the estimated upper bound on the right hand side in the last inequality.

As a concrete experiment, let us investigate $13 \times 11 \times 7$ tensors whose factor matrices \verb|F(s)| were generated as follows:
\begin{verbatim}
A = randn(13,2); B = randn(11,2); C = randn(7,2); x = randn(7,1);
F = @(s) {A, B, x*[1 1] + 2^(-s) * C};
\end{verbatim}
I will use the \verb|cpd_gevd| algorithm from Tensorlab for computing a direct decomposition of $\tensor{A}_s$ for the values $s = 1, 2, \ldots, 45$. For simplicity, I will use $\|\vect{p}_s - \widehat{\vect{p}}_s\|$ as a proxy for the forward error $d(\vect{p}_s, \widehat{\vect{p}}_s)$; by definition, the former is an upper bound on the latter. The backward error, the proxy of the forward error, and estimated forward error obtained by multiplying the condition number with the backward error are all plotted in function of $s$ in \reffig{fig_backforecond}.

\begin{figure}
\caption{The relative backward error, relative forward error, and relative condition number of a particular instance of the sequence in \refeqn{eqn_bad_sequence}.}
\label{fig_backforecond}
\begingroup
  \makeatletter
  \providecommand\color[2][]{%
    \GenericError{(gnuplot) \space\space\space\@spaces}{%
      Package color not loaded in conjunction with
      terminal option `colourtext'%
    }{See the gnuplot documentation for explanation.%
    }{Either use 'blacktext' in gnuplot or load the package
      color.sty in LaTeX.}%
    \renewcommand\color[2][]{}%
  }%
  \providecommand\includegraphics[2][]{%
    \GenericError{(gnuplot) \space\space\space\@spaces}{%
      Package graphicx or graphics not loaded%
    }{See the gnuplot documentation for explanation.%
    }{The gnuplot epslatex terminal needs graphicx.sty or graphics.sty.}%
    \renewcommand\includegraphics[2][]{}%
  }%
  \providecommand\rotatebox[2]{#2}%
  \@ifundefined{ifGPcolor}{%
    \newif\ifGPcolor
    \GPcolortrue
  }{}%
  \@ifundefined{ifGPblacktext}{%
    \newif\ifGPblacktext
    \GPblacktexttrue
  }{}%
  \let\gplgaddtomacro\g@addto@macro
  \gdef\gplbacktext{}%
  \gdef\gplfronttext{}%
  \makeatother
  \ifGPblacktext
    \def\colorrgb#1{}%
    \def\colorgray#1{}%
  \else
    \ifGPcolor
      \def\colorrgb#1{\color[rgb]{#1}}%
      \def\colorgray#1{\color[gray]{#1}}%
      \expandafter\def\csname LTw\endcsname{\color{white}}%
      \expandafter\def\csname LTb\endcsname{\color{black}}%
      \expandafter\def\csname LTa\endcsname{\color{black}}%
      \expandafter\def\csname LT0\endcsname{\color[rgb]{1,0,0}}%
      \expandafter\def\csname LT1\endcsname{\color[rgb]{0,1,0}}%
      \expandafter\def\csname LT2\endcsname{\color[rgb]{0,0,1}}%
      \expandafter\def\csname LT3\endcsname{\color[rgb]{1,0,1}}%
      \expandafter\def\csname LT4\endcsname{\color[rgb]{0,1,1}}%
      \expandafter\def\csname LT5\endcsname{\color[rgb]{1,1,0}}%
      \expandafter\def\csname LT6\endcsname{\color[rgb]{0,0,0}}%
      \expandafter\def\csname LT7\endcsname{\color[rgb]{1,0.3,0}}%
      \expandafter\def\csname LT8\endcsname{\color[rgb]{0.5,0.5,0.5}}%
    \else
      \def\colorrgb#1{\color{black}}%
      \def\colorgray#1{\color[gray]{#1}}%
      \expandafter\def\csname LTw\endcsname{\color{white}}%
      \expandafter\def\csname LTb\endcsname{\color{black}}%
      \expandafter\def\csname LTa\endcsname{\color{black}}%
      \expandafter\def\csname LT0\endcsname{\color{black}}%
      \expandafter\def\csname LT1\endcsname{\color{black}}%
      \expandafter\def\csname LT2\endcsname{\color{black}}%
      \expandafter\def\csname LT3\endcsname{\color{black}}%
      \expandafter\def\csname LT4\endcsname{\color{black}}%
      \expandafter\def\csname LT5\endcsname{\color{black}}%
      \expandafter\def\csname LT6\endcsname{\color{black}}%
      \expandafter\def\csname LT7\endcsname{\color{black}}%
      \expandafter\def\csname LT8\endcsname{\color{black}}%
    \fi
  \fi
  \setlength{\unitlength}{0.0500bp}%
  \scalebox{0.90}{
  \begin{picture}(8640.00,3528.00)%
    \gplgaddtomacro\gplbacktext{%
      \csname LTb\endcsname%
      \put(1122,704){\makebox(0,0)[r]{\strut{} $10^{-16}$}}%
      \csname LTb\endcsname%
      \put(1122,1024){\makebox(0,0)[r]{\strut{} $10^{-14}$}}%
      \csname LTb\endcsname%
      \put(1122,1344){\makebox(0,0)[r]{\strut{} $10^{-12}$}}%
      \csname LTb\endcsname%
      \put(1122,1664){\makebox(0,0)[r]{\strut{} $10^{-10}$}}%
      \csname LTb\endcsname%
      \put(1122,1984){\makebox(0,0)[r]{\strut{} $10^{-8}$}}%
      \csname LTb\endcsname%
      \put(1122,2303){\makebox(0,0)[r]{\strut{} $10^{-6}$}}%
      \csname LTb\endcsname%
      \put(1122,2623){\makebox(0,0)[r]{\strut{} $10^{-4}$}}%
      \csname LTb\endcsname%
      \put(1122,2943){\makebox(0,0)[r]{\strut{} $10^{-2}$}}%
      \csname LTb\endcsname%
      \put(1122,3263){\makebox(0,0)[r]{\strut{} $1$}}%
      \csname LTb\endcsname%
      \put(1254,484){\makebox(0,0){\strut{} 0}}%
      \csname LTb\endcsname%
      \put(2030,484){\makebox(0,0){\strut{} 5}}%
      \csname LTb\endcsname%
      \put(2807,484){\makebox(0,0){\strut{} 10}}%
      \csname LTb\endcsname%
      \put(3583,484){\makebox(0,0){\strut{} 15}}%
      \csname LTb\endcsname%
      \put(4360,484){\makebox(0,0){\strut{} 20}}%
      \csname LTb\endcsname%
      \put(5136,484){\makebox(0,0){\strut{} 25}}%
      \csname LTb\endcsname%
      \put(5913,484){\makebox(0,0){\strut{} 30}}%
      \csname LTb\endcsname%
      \put(6689,484){\makebox(0,0){\strut{} 35}}%
      \csname LTb\endcsname%
      \put(7466,484){\makebox(0,0){\strut{} 40}}%
      \csname LTb\endcsname%
      \put(8242,484){\makebox(0,0){\strut{} 45}}%
      \put(4748,154){\makebox(0,0){\strut{}$s$}}%
    }%
    \gplgaddtomacro\gplfronttext{%
      \csname LTb\endcsname%
      \put(4686,3090){\makebox(0,0)[r]{\strut{}Backward error}}%
      \csname LTb\endcsname%
      \put(4686,2870){\makebox(0,0)[r]{\strut{}Forward error}}%
      \csname LTb\endcsname%
      \put(4686,2650){\makebox(0,0)[r]{\strut{}Forward error upper bound}}%
    }%
    \gplbacktext
    \put(0,0){\includegraphics{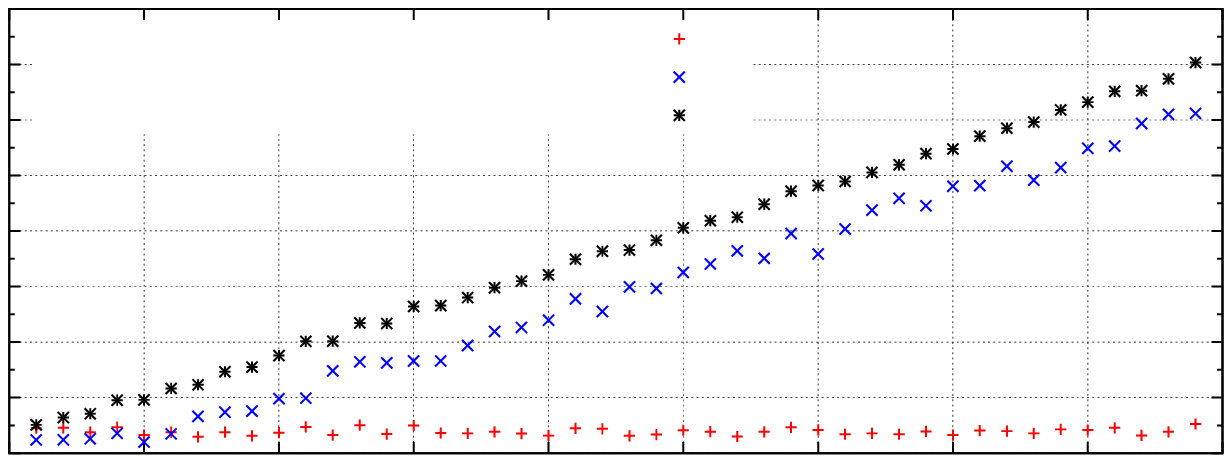}}%
    \gplfronttext
  \end{picture}%
  }
\endgroup
\end{figure}

I believe that \reffig{fig_backforecond} clearly illustrates the danger of simply investigating the relative backward error. One sees that the backward error of the generalized eigenvalue decomposition algorithm is of the order of the machine precision so long as the tensor $\tensor{A}_s$ is sufficiently\footnote{When $s=45$, Tensorlab's implementation of the ST-HOSVD \cite{Vannieuwenhoven2012a} algorithm detects that the multilinear rank of $\tensor{A}_s$ is very close to $(2,2,1)$, at which point the software prudently refuses to employ the generalized eigenvalue decomposition algorithm.} discernible from $\tensor{A}_\infty$; for the latter both the generalized eigendecomposition and the tensor rank decomposition no longer admit a unique solution. The forward error, however, may be several orders of magnitude larger. In this example, it can be observed that the forward error grows as $s$ increases because of the increase of the condition number from roughly $\mathcal{O}(1)$ to $\mathcal{O}(10^{13})$. 

This experiment seems to suggest that the condition number strongly increases as one moves closer to a tensor with $\infty$ many decompositions. An intuitive explanation is that an identifiable tensor $\tensor{A}_\epsilon$, whose norm-balanced vectorized factor matrices are given by $\vect{p}_\epsilon$, at relative distance $\epsilon$ to a tensor with $\infty$ many decompositions $\tensor{A}$ needs but a perturbation of magnitude $\epsilon$ for \emph{potentially} increasing the relative forward error to $\mathcal{O}(1)$. Then,
\[
 \kappa(\tensor{A}_\epsilon) \gtrsim \frac{ d(\vect{p}_\epsilon, f^\dagger(\tensor{A}))/\|\vect{p}\| }{\|\tensor{A}-\tensor{A}_\epsilon\|/\|\tensor{A}_\epsilon\|} = \mathcal{O}(1) \cdot \frac{\|\tensor{A}_\epsilon\|}{\|\vect{p}_\epsilon\|} \cdot \epsilon^{-1},
\]
which diverges to infinity if the fraction in the right hand side remains constant as $\epsilon\to0$.
Further research is necessary for proving this theoretically.

\subsection{Two ill-posed examples}\label{sec_illposed_example}
According to Demmel \cite{Demmel1987}, a problem that is close to an ill-posed problem is often ill-conditioned. We will investigate numerically whether this property is also admitted by the proposed condition number, starting with the well-known example of de Silva and Lim \cite{Silva2008}:
\begin{align}\label{eqn_illposed1}
 \tensor{A}_s = f(\vect{p}_s) = 2^{s/5} (\sten{a}{}{1} + 2^{-s/5} \sten{b}{}{1}) \otimes (\sten{a}{}{2} + 2^{-s/5} \sten{b}{ }{2}) \otimes (\sten{a}{ }{3} + 2^{-s/5} \sten{b}{ }{3}) - 2^{s/5} \sten{a}{ }{1} \otimes \sten{a}{ }{2} \otimes \sten{a}{ }{3},
\end{align}
where $\sten{a}{ }{k}, \sten{b}{ }{k} \in \F^{n_k}$ are linearly independent vectors. Every tensor on this sequence is robustly $2$-identifiable by Kruskal's criterion ($2 \le \tfrac{1}{2}(2+2+2-2)$), and the limit of this sequence of rank-$2$ tensors is the rank-$3$ tensor
\[
 \lim_{s\to\infty} \tensor{A}_s = \sten{b}{ }{1}\otimes\sten{a}{ }{2}\otimes\sten{a}{ }{3} + \sten{a}{ }{1}\otimes\sten{b}{ }{2}\otimes\sten{a}{ }{3} + \sten{a}{}{1}\otimes\sten{a}{ }{2}\otimes\sten{b}{ }{3}.
\]
As in the foregoing ill-conditioned example, the tensor rank decomposition of $\tensor{A}_s$ can be computed using a direct algorithm, such as \verb|cpd_gevd|. Let $\widehat{\vect{p}}_s$ denote the set of norm-balanced vectorized factor matrices obtained from applying this numerical algorithm to $\tensor{A}_s$, and let $\widehat{\tensor{A}}_s = f(\widehat{\vect{p}}_s)$.

As a particular instance, I generated vectors $\sten{a}{ }{1}, \sten{b}{ }{1} \in \R^5$, $\sten{a}{ }{2}, \sten{b}{ }{2} \in \R^4$ and $\sten{a}{ }{3}, \sten{b}{ }{3}\in\R^3$ all of whose entries were sampled from a standard normal distribution. Then, for all $s = 5, 6, \ldots, 100$, the rank-$2$ decomposition was computed with \verb|cpd_gevd|, recording the relative backward error, the proxy of the relative forward error, and the relative condition number at $\widehat{\tensor{A}}_s$ multiplied with the relative backward error. These quantities are plotted in \reffig{fig_illposed1}.

\begin{figure}
\caption{The relative backward error, relative forward error, and relative condition number of a particular instance of the sequence in \refeqn{eqn_illposed1}.}
\label{fig_illposed1}
\begingroup
  \makeatletter
  \providecommand\color[2][]{%
    \GenericError{(gnuplot) \space\space\space\@spaces}{%
      Package color not loaded in conjunction with
      terminal option `colourtext'%
    }{See the gnuplot documentation for explanation.%
    }{Either use 'blacktext' in gnuplot or load the package
      color.sty in LaTeX.}%
    \renewcommand\color[2][]{}%
  }%
  \providecommand\includegraphics[2][]{%
    \GenericError{(gnuplot) \space\space\space\@spaces}{%
      Package graphicx or graphics not loaded%
    }{See the gnuplot documentation for explanation.%
    }{The gnuplot epslatex terminal needs graphicx.sty or graphics.sty.}%
    \renewcommand\includegraphics[2][]{}%
  }%
  \providecommand\rotatebox[2]{#2}%
  \@ifundefined{ifGPcolor}{%
    \newif\ifGPcolor
    \GPcolortrue
  }{}%
  \@ifundefined{ifGPblacktext}{%
    \newif\ifGPblacktext
    \GPblacktexttrue
  }{}%
  \let\gplgaddtomacro\g@addto@macro
  \gdef\gplbacktext{}%
  \gdef\gplfronttext{}%
  \makeatother
  \ifGPblacktext
    \def\colorrgb#1{}%
    \def\colorgray#1{}%
  \else
    \ifGPcolor
      \def\colorrgb#1{\color[rgb]{#1}}%
      \def\colorgray#1{\color[gray]{#1}}%
      \expandafter\def\csname LTw\endcsname{\color{white}}%
      \expandafter\def\csname LTb\endcsname{\color{black}}%
      \expandafter\def\csname LTa\endcsname{\color{black}}%
      \expandafter\def\csname LT0\endcsname{\color[rgb]{1,0,0}}%
      \expandafter\def\csname LT1\endcsname{\color[rgb]{0,1,0}}%
      \expandafter\def\csname LT2\endcsname{\color[rgb]{0,0,1}}%
      \expandafter\def\csname LT3\endcsname{\color[rgb]{1,0,1}}%
      \expandafter\def\csname LT4\endcsname{\color[rgb]{0,1,1}}%
      \expandafter\def\csname LT5\endcsname{\color[rgb]{1,1,0}}%
      \expandafter\def\csname LT6\endcsname{\color[rgb]{0,0,0}}%
      \expandafter\def\csname LT7\endcsname{\color[rgb]{1,0.3,0}}%
      \expandafter\def\csname LT8\endcsname{\color[rgb]{0.5,0.5,0.5}}%
    \else
      \def\colorrgb#1{\color{black}}%
      \def\colorgray#1{\color[gray]{#1}}%
      \expandafter\def\csname LTw\endcsname{\color{white}}%
      \expandafter\def\csname LTb\endcsname{\color{black}}%
      \expandafter\def\csname LTa\endcsname{\color{black}}%
      \expandafter\def\csname LT0\endcsname{\color{black}}%
      \expandafter\def\csname LT1\endcsname{\color{black}}%
      \expandafter\def\csname LT2\endcsname{\color{black}}%
      \expandafter\def\csname LT3\endcsname{\color{black}}%
      \expandafter\def\csname LT4\endcsname{\color{black}}%
      \expandafter\def\csname LT5\endcsname{\color{black}}%
      \expandafter\def\csname LT6\endcsname{\color{black}}%
      \expandafter\def\csname LT7\endcsname{\color{black}}%
      \expandafter\def\csname LT8\endcsname{\color{black}}%
    \fi
  \fi
  \setlength{\unitlength}{0.0500bp}%
  \scalebox{0.90}{
  \begin{picture}(8640.00,3528.00)%
    \gplgaddtomacro\gplbacktext{%
      \csname LTb\endcsname%
      \put(1122,839){\makebox(0,0)[r]{\strut{} $10^{-16}$}}%
      \csname LTb\endcsname%
      \put(1122,1108){\makebox(0,0)[r]{\strut{} $10^{-14}$}}%
      \csname LTb\endcsname%
      \put(1122,1377){\makebox(0,0)[r]{\strut{} $10^{-12}$}}%
      \csname LTb\endcsname%
      \put(1122,1647){\makebox(0,0)[r]{\strut{} $10^{-10}$}}%
      \csname LTb\endcsname%
      \put(1122,1916){\makebox(0,0)[r]{\strut{} $10^{-8}$}}%
      \csname LTb\endcsname%
      \put(1122,2186){\makebox(0,0)[r]{\strut{} $10^{-6}$}}%
      \csname LTb\endcsname%
      \put(1122,2455){\makebox(0,0)[r]{\strut{} $10^{-4}$}}%
      \csname LTb\endcsname%
      \put(1122,2724){\makebox(0,0)[r]{\strut{} $10^{-2}$}}%
      \csname LTb\endcsname%
      \put(1122,2994){\makebox(0,0)[r]{\strut{} $1$}}%
      \csname LTb\endcsname%
      \csname LTb\endcsname%
      \put(1691,484){\makebox(0,0){\strut{} 10}}%
      \csname LTb\endcsname%
      \put(2419,484){\makebox(0,0){\strut{} 20}}%
      \csname LTb\endcsname%
      \put(3147,484){\makebox(0,0){\strut{} 30}}%
      \csname LTb\endcsname%
      \put(3875,484){\makebox(0,0){\strut{} 40}}%
      \csname LTb\endcsname%
      \put(4602,484){\makebox(0,0){\strut{} 50}}%
      \csname LTb\endcsname%
      \put(5330,484){\makebox(0,0){\strut{} 60}}%
      \csname LTb\endcsname%
      \put(6058,484){\makebox(0,0){\strut{} 70}}%
      \csname LTb\endcsname%
      \put(6786,484){\makebox(0,0){\strut{} 80}}%
      \csname LTb\endcsname%
      \put(7514,484){\makebox(0,0){\strut{} 90}}%
      \csname LTb\endcsname%
      \put(8242,484){\makebox(0,0){\strut{} 100}}%
      \put(4748,154){\makebox(0,0){\strut{}$s$}}%
    }%
    \gplgaddtomacro\gplfronttext{%
      \csname LTb\endcsname%
      \put(4686,3090){\makebox(0,0)[r]{\strut{}Backward error}}%
      \csname LTb\endcsname%
      \put(4686,2870){\makebox(0,0)[r]{\strut{}Forward error}}%
      \csname LTb\endcsname%
      \put(4686,2650){\makebox(0,0)[r]{\strut{}Forward error upper bound}}%
    }%
    \gplbacktext
    \put(0,0){\includegraphics{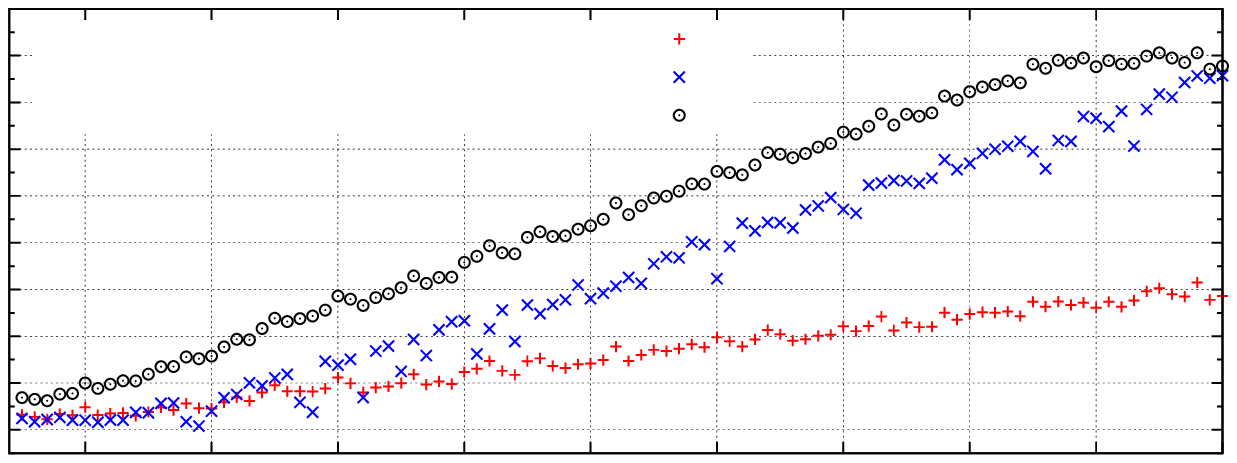}}%
    \gplfronttext
  \end{picture}%
  }
\endgroup
\end{figure}

It is again clear from \reffig{fig_illposed1} that the backward error is not a good measure of the stability of the vectorized factor matrices, as the forward error is several orders of magnitude larger for large $s$. Contrary to the ill-conditioned example from the foregoing subsection, the backward error is also increasing significantly. It turns out that the increase of the backward error is quite strongly correlated with the fraction $\|\widehat{\vect{p}}_s\|/\|\widehat{\tensor{A}}_s\|$ with a correlation coefficient of about $0.76$.

From the figure it can be deduced that the condition number increases from $\mathcal{O}(1)$ to $\mathcal{O}(10^{10})$ as $s$ increases from $5$ to $90$, suggesting that the conditioning of the problem deteriorates as one moves closer to the ill-posed tensor rank decomposition problem. Notice that the upper bound on the forward error seems to stagnate around $s = 85$. It should be stressed that this is a numerical issue in the computation of the condition number. Employing variable precision arithmetic, it can be verified that the condition number keeps increasing as $s$ increases. 
Fortunately, the occurrence of these numerical difficulties in computing the condition number can be detected by verifying that the $N$th singular value of Terracini's matrix is larger than a small constant multiple of the machine precision---$\epsilon_\text{mach} \approx 2.2\cdot10^{-16}$ using standard double precision in Matlab---multiplied with the largest singular value $\sigma_1$ of Terracini's matrix. The basic implementation that is included with this manuscript implements this test and issues a warning if the $N$th singular value is larger than $100 \cdot \epsilon_\text{mach} \cdot \sigma_1$. In this particular example, the code issues warnings starting from $s \ge 75$. Although I have no formal proof, I believe that it is reasonable to assume that when the computation of the condition number starts to suffer from numerical difficulties, the tensor rank decomposition problem is likely ill-conditioned.

It seems that the behavior of the condition number is very similar in other known examples of sequences tending to an ill-posed tensor rank decomposition problem. I present one more case, namely the example of Paatero \cite[Section 7]{Paatero2000}:
\begin{verbatim}
Ab = randn(n1,3); Bb = randn(n2,3); Cb = randn(n3,3);
A = @(e) [-Ab(:,1)/e - Ab(:,2)/e, Ab(:,1)/e + e*e*Ab(:,3)/2, Ab(:,2)/e];
B = @(e) [-Bb(:,1)/e, Bb(:,1)/e + e*e*Bb(:,2)/2, Bb(:,1)/e + e*e*Bb(:,3)/2];
C = @(e) [-Cb(:,1)/e, Cb(:,1)/e + e*e*Cb(:,2)/2, Cb(:,1)/e + e*e*Cb(:,3)/2];
F = @(s) {A(2^(-s/16)), B(2^(-s/16)), C(2^(-s/16))};
\end{verbatim}
wherein $n_1, n_2, n_3 \ge 3$. I chose \verb|n1 = 5|, \verb|n2 = 4| and \verb|n3 = 3| for the present experiment. Then, every tensor $\tensor{A}_s$ whose factor matrices are given by \verb|F(s)| will be robustly $r$-identifiable by Kruskal's theorem ($3 \le \tfrac{1}{2}(3 + 3 + 3 - 2)$), provided that the matrices \verb|Ab|, \verb|Bb| and \verb|Cb| have linearly independent columns.

\begin{figure}
\caption{The relative backward error, relative forward error, and relative condition number of a particular instance of Paatero's example \cite[Section 7]{Paatero2000}.}
\label{fig_illposed2}
\begingroup
  \makeatletter
  \providecommand\color[2][]{%
    \GenericError{(gnuplot) \space\space\space\@spaces}{%
      Package color not loaded in conjunction with
      terminal option `colourtext'%
    }{See the gnuplot documentation for explanation.%
    }{Either use 'blacktext' in gnuplot or load the package
      color.sty in LaTeX.}%
    \renewcommand\color[2][]{}%
  }%
  \providecommand\includegraphics[2][]{%
    \GenericError{(gnuplot) \space\space\space\@spaces}{%
      Package graphicx or graphics not loaded%
    }{See the gnuplot documentation for explanation.%
    }{The gnuplot epslatex terminal needs graphicx.sty or graphics.sty.}%
    \renewcommand\includegraphics[2][]{}%
  }%
  \providecommand\rotatebox[2]{#2}%
  \@ifundefined{ifGPcolor}{%
    \newif\ifGPcolor
    \GPcolortrue
  }{}%
  \@ifundefined{ifGPblacktext}{%
    \newif\ifGPblacktext
    \GPblacktexttrue
  }{}%
  \let\gplgaddtomacro\g@addto@macro
  \gdef\gplbacktext{}%
  \gdef\gplfronttext{}%
  \makeatother
  \ifGPblacktext
    \def\colorrgb#1{}%
    \def\colorgray#1{}%
  \else
    \ifGPcolor
      \def\colorrgb#1{\color[rgb]{#1}}%
      \def\colorgray#1{\color[gray]{#1}}%
      \expandafter\def\csname LTw\endcsname{\color{white}}%
      \expandafter\def\csname LTb\endcsname{\color{black}}%
      \expandafter\def\csname LTa\endcsname{\color{black}}%
      \expandafter\def\csname LT0\endcsname{\color[rgb]{1,0,0}}%
      \expandafter\def\csname LT1\endcsname{\color[rgb]{0,1,0}}%
      \expandafter\def\csname LT2\endcsname{\color[rgb]{0,0,1}}%
      \expandafter\def\csname LT3\endcsname{\color[rgb]{1,0,1}}%
      \expandafter\def\csname LT4\endcsname{\color[rgb]{0,1,1}}%
      \expandafter\def\csname LT5\endcsname{\color[rgb]{1,1,0}}%
      \expandafter\def\csname LT6\endcsname{\color[rgb]{0,0,0}}%
      \expandafter\def\csname LT7\endcsname{\color[rgb]{1,0.3,0}}%
      \expandafter\def\csname LT8\endcsname{\color[rgb]{0.5,0.5,0.5}}%
    \else
      \def\colorrgb#1{\color{black}}%
      \def\colorgray#1{\color[gray]{#1}}%
      \expandafter\def\csname LTw\endcsname{\color{white}}%
      \expandafter\def\csname LTb\endcsname{\color{black}}%
      \expandafter\def\csname LTa\endcsname{\color{black}}%
      \expandafter\def\csname LT0\endcsname{\color{black}}%
      \expandafter\def\csname LT1\endcsname{\color{black}}%
      \expandafter\def\csname LT2\endcsname{\color{black}}%
      \expandafter\def\csname LT3\endcsname{\color{black}}%
      \expandafter\def\csname LT4\endcsname{\color{black}}%
      \expandafter\def\csname LT5\endcsname{\color{black}}%
      \expandafter\def\csname LT6\endcsname{\color{black}}%
      \expandafter\def\csname LT7\endcsname{\color{black}}%
      \expandafter\def\csname LT8\endcsname{\color{black}}%
    \fi
  \fi
  \setlength{\unitlength}{0.0500bp}%
  \scalebox{0.90}{
  \begin{picture}(8640.00,3528.00)%
    \gplgaddtomacro\gplbacktext{%
      \csname LTb\endcsname%
      \put(1122,839){\makebox(0,0)[r]{\strut{} $10^{-16}$}}%
      \csname LTb\endcsname%
      \put(1122,1108){\makebox(0,0)[r]{\strut{} $10^{-14}$}}%
      \csname LTb\endcsname%
      \put(1122,1377){\makebox(0,0)[r]{\strut{} $10^{-12}$}}%
      \csname LTb\endcsname%
      \put(1122,1647){\makebox(0,0)[r]{\strut{} $10^{-10}$}}%
      \csname LTb\endcsname%
      \put(1122,1916){\makebox(0,0)[r]{\strut{} $10^{-8}$}}%
      \csname LTb\endcsname%
      \put(1122,2186){\makebox(0,0)[r]{\strut{} $10^{-6}$}}%
      \csname LTb\endcsname%
      \put(1122,2455){\makebox(0,0)[r]{\strut{} $10^{-4}$}}%
      \csname LTb\endcsname%
      \put(1122,2724){\makebox(0,0)[r]{\strut{} $10^{-2}$}}%
      \csname LTb\endcsname%
      \put(1122,2994){\makebox(0,0)[r]{\strut{} $1$}}%
      \csname LTb\endcsname%
      \csname LTb\endcsname%
      \put(1665,484){\makebox(0,0){\strut{} 20}}%
      \csname LTb\endcsname%
      \put(2487,484){\makebox(0,0){\strut{} 30}}%
      \csname LTb\endcsname%
      \put(3309,484){\makebox(0,0){\strut{} 40}}%
      \csname LTb\endcsname%
      \put(4131,484){\makebox(0,0){\strut{} 50}}%
      \csname LTb\endcsname%
      \put(4954,484){\makebox(0,0){\strut{} 60}}%
      \csname LTb\endcsname%
      \put(5776,484){\makebox(0,0){\strut{} 70}}%
      \csname LTb\endcsname%
      \put(6598,484){\makebox(0,0){\strut{} 80}}%
      \csname LTb\endcsname%
      \put(7420,484){\makebox(0,0){\strut{} 90}}%
      \csname LTb\endcsname%
      \put(8242,484){\makebox(0,0){\strut{} 100}}%
      \put(4748,154){\makebox(0,0){\strut{}$s$}}%
    }%
    \gplgaddtomacro\gplfronttext{%
      \csname LTb\endcsname%
      \put(4686,3090){\makebox(0,0)[r]{\strut{}Backward error}}%
      \csname LTb\endcsname%
      \put(4686,2870){\makebox(0,0)[r]{\strut{}Forward error}}%
      \csname LTb\endcsname%
      \put(4686,2650){\makebox(0,0)[r]{\strut{}Forward error upper bound}}%
    }%
    \gplbacktext
    \put(0,0){\includegraphics{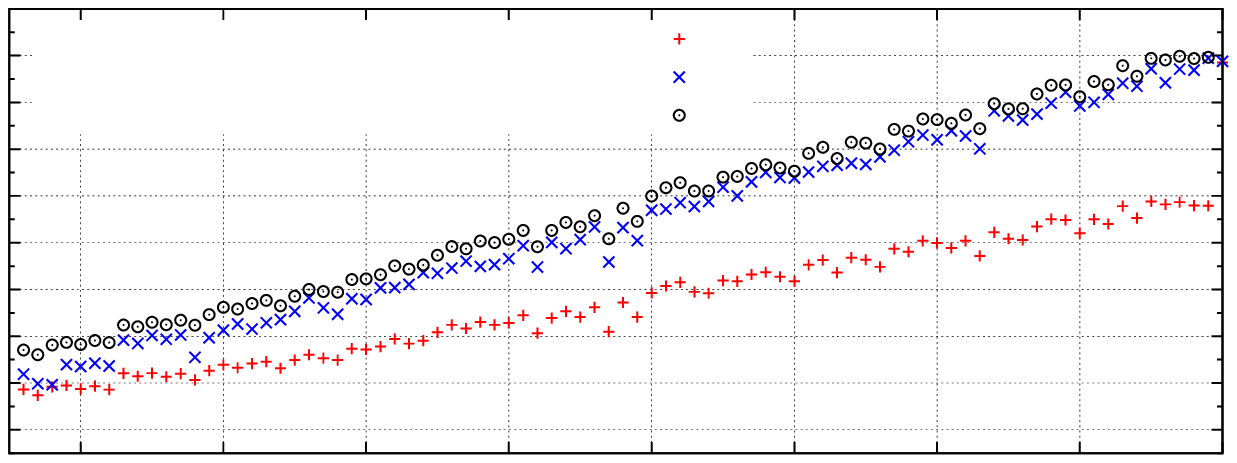}}%
    \gplfronttext
  \end{picture}%
  }
\endgroup
\end{figure}

In \reffig{fig_illposed2}, the evolution of the relative backward error, the proxy of the relative forward error, and the condition number multiplied with the backward error are plotted in function of $s$. The behavior is very similar to the case of de Silva and Lim's example. The relative backward error is increasing from $\mathcal{O}(10^{-16})$ to $\mathcal{O}(10^{-10})$, seemingly driven by the increase of the fraction $\|\widehat{\vect{p}}_s\|/\|\widehat{\tensor{A}}_s\|$---the correlation coefficient is about $0.73$ in this case. Meanwhile, the relative condition number increases from $\mathcal{O}(10^2)$ to $\mathcal{O}(10^{6})$. It is again visually clear that the condition number multiplied with the backward error provides a more accurate estimate of the forward error than the backward error does. 
For $s=100$, the code warned about a possibly inaccurate computation of the condition number. Also note that the \verb|cpd_gevd| failed to compute a reasonable decomposition in this case, for some reason unbeknown to me.

These experiments seem to suggest that the condition number of the rank-$r$ decomposition problem strongly increases as one moves closer to a tensor $\tensor{A}$ that can be approximated arbitrarily well by a rank-$r$ tensor, but which does not admit an exact rank-$r$ decomposition. It is known from \cite{Silva2008} that on such a sequence, $\|\vect{p}_s\| \to \infty$, i.e., the norm of the vectorized factor matrices grows without bound. On the other hand, $\|\tensor{A}_s\|$ is bounded by a constant for sufficiently large $s$, because the sequence $\tensor{A}_s \to \tensor{A}$. Since
\[
 \kappa(\tensor{A}_\epsilon) \gtrsim \frac{ d(\vect{p}_\epsilon, f^\dagger(\tensor{A}))/\|\vect{p}_\epsilon\| }{\|\tensor{A}-\tensor{A}_\epsilon\|/\|\tensor{A}_\epsilon\|} = d(\vect{p}_\epsilon, f^\dagger(\tensor{A})) \frac{1}{\|\vect{p}_\epsilon\| \cdot \|\tensor{A}-\tensor{A}_\epsilon\|/\|\tensor{A}_\epsilon\|},
\]
no conclusions can be drawn without understanding the relative speed with which $\|\vect{p}_s\| \to \infty$ and $\|\tensor{A}-\tensor{A}_\epsilon\|/\|\tensor{A}_\epsilon\|\to0$, and what happens to $d(\vect{p}_\epsilon, f^\dagger(\tensor{A}))$---the latter is not even well-defined, because the fiber is empty. The two examples seem to suggest that the condition number becomes unbounded.

%
%
%
%
\section{Conclusions}\label{sec_conclusions}
A condition number for the tensor rank decomposition problem was introduced. 
Provided that the input tensor satisfies a technical assumption called robust $r$-identifiability, the proposed condition number admits a particularly simple interpretation as the asymptotically worst fraction between the relative forward and relative backward error. 
The investigated condition number corresponds to a multiple of the inverse of a certain singular value of Terracini's matrix. Several basic properties of the norm-balanced condition number were investigated. The main conclusions of this paper are that 
\begin{itemize}
 \item Terracini's matrix is the key to investigating conditioning of robustly $r$-identifiable tensor rank decompositions;
 \item the condition number multiplied with the backward error yields an asymptotic upper bound on the forward error;
 \item rank-$1$ tensors are always well-conditioned;
 \item weak $3$-orthogonal tensors (which include odeco tensors) are well-conditioned when the norms of the individual rank-$1$ terms are roughly of the same order of magnitude; and 
 \item weak $3$-orthogonal tensors are ill-conditioned when several orders of magnitude of difference exists in the norms of the rank-$1$ terms.
\end{itemize}
In addition, the numerical experiments provide some preliminary evidence suggesting that robustly $r$-identifiable tensors that are close to 
\begin{itemize}
 \item tensors with $\infty$ many rank-$r$ decompositions (see \refsec{sec_illcond_example}), or 
 \item tensors with border rank $r$ but rank strictly larger (see \refsec{sec_illposed_example})
\end{itemize}
can be ill-conditioned. 

I hope that this paper opens several avenues of further research as well as practical application. In particular, I hope that it may find application in a more rigorous stability analysis of computed tensor rank decompositions. I believe that it may assist in understanding the convergence of gradient-based optimization methods for computing approximate tensor rank decompositions. Another interesting question concerns the conditioning of the tensor rank \emph{approximation} problem, whereby the sensitivity of the factor matrices is investigated with respect to a general, unstructured perturbation of a tensor. It is also an open question whether the behavior observed in Sections \ref{sec_illcond_example} and \ref{sec_illposed_example} may be explained theoretically. Finally, it may be anticipated that the analysis and techniques presented in this paper may also be applied in more structured variants of the tensor rank decomposition, such as Waring decompositions of symmetric tensors \cite{Comon2008}---which are provably generically $r$-identifiable for all subgeneric symmetric ranks, and thus generically robustly $r$-identifiable, because of \cite{COV2015}---or of tensor rank decompositions of partially symmetric tensors.

\clearpage
\appendix
\section{Matlab/Octave program code} \label{app_code}

\lstset{language=Matlab,basicstyle=\footnotesize,showstringspaces=false}
\begin{lstlisting}
function [ kappa, A, T ] = cpdcond( A, ~ )
%CPDCOND Computes the condition number. 
%  The routine estimates the norm-balanced condition number of the factor
%   matrices A. If the second input argument is specified, the absolute
%   condition number is computed. 
%  The first output argument is the norm-balanced condition number.
%  The second output argument are the norm-balanced factor matrices with
%   respect to which the condition number is computed.
%  The third output argument contains Terracini's matrix of the
%   norm-balanced representatives.

% Constants
d = length(A);
n = arrayfun(@(k) size(A{k},1), 1:d);
r = size(A{1},2);
Pi = prod(n);
Sigma = sum(n-1);

%% Apply norm-balancing
scal = ones(1,r);
for k = 1 : d
    lscal = arrayfun(@(l) norm(A{k}(:,l)), 1:r);
    scal = scal .* lscal.^(1/d);
    A{k} = A{k} * diag(1./lscal);
end
for k = 1 : d
    A{k} = A{k} * diag(scal);
end
normP = sqrt(d)*norm(scal);

% Condition number is unbounded by definition.
if r*(1+Sigma) > Pi
   warning('The number of terms is not subgeneric.')
   kappa = inf;
   T = [];
   return;
end

%% Construct Terracini's matrix
T = zeros(Pi,r*(d+Sigma));
for l = 1 : r
   lT = [];
   for k = 1 : d
       B = arrayfun(...
           @(kk) repmat(A{kk}(:,l),1,n(k)), 1:d, 'UniformOutput', false ...
       );
       B{k} = eye(n(k));
       lT = [lT krp(B)];
   end
   T(:,(d+Sigma)*(l-1)+1:(d+Sigma)*l) = lT;
end

%% Compute relative condition number
N = r*(1+Sigma);
S = svd(T,0);
kappa = 1 / S(N);
if nargin == 1
   tensNorm = norm(krp(A)*ones(r,1));
   kappa = (kappa * tensNorm) / normP;
end
if ( S(1)*100*eps > S(N) )
   warning('Computed condition number may not be accurate!')
end

    % Compute Khatri-Rao product
    function [X] = krp(B)
       Ns = prod(arrayfun(@(k)size(B{k},1),1:length(B)));
       X = zeros(Ns,size(B{1},2));
       for ll = 1 : size(X,2)
           x = B{1}(:,ll);
           for kk = 2 : length(B)
               x = kron(x,B{kk}(:,ll));
           end
           X(:,ll) = x;
       end
    end
end
\end{lstlisting}

\begin{lstlisting}
function [ p, delta ] = isl( p, nabla, n, r )
%ISL Applies the Iterated Scaling algorithm. 
%   The first input argument is the set of vectorized factor matrices.
%   The second input argument is the perturbation vector.
%   The third input argument is an array of integers containing the
%     dimensions of the tensor represented by the vectorized factor
%     matrices in p.
%   The fourth input argument is the number of terms in the rank
%     decomposition of the tensor represented by p.
%   The first output argument is the rescaled set of vectorized factor
%     matrices, representing the same tensor as p.
%   The second output argument is a vector perpendicular to the kernel
%     of Terracini's matrix, whose norm is approximately the square of the
%     norm of nabla.

% Constants
d = length(n);
cn = cumsum(n);
Sigma = cn(d);

% Construct kernel
K = zeros(r*Sigma, r*(d-1));
li = 1;
for i = 1 : r
   pt = p((i-1)*Sigma+1:i*Sigma);
   for k = 2 : d
       off = (i-1)*Sigma;
       K(off+1:off+cn(1),li) = pt(1:cn(1));
       K(off+cn(k-1)+1:off+cn(k),li) = -pt(cn(k-1)+1:cn(k));
       li = li + 1;
   end
end

% Iterated scaling algorithm
p0 = p;
pk = p;
delta = zeros(size(p));
gam = ones(d-1,r);
[U,S,V] = svd(K,0);
nablanorm = inf;
while nablanorm > 10*eps
    v = V*(S\(U'*nabla));
    gam = gam - reshape(v, [d-1 r]);
    zk = pk + K*v;
    off = 0;
    for i = 1 : r
        pk(off+1:off+cn(1)) = p0(off+1:off+cn(1))/prod(gam(:,i));
        off = off + n(1);
        for k = 2 : d
            pk(off+1:off+n(k)) = p0(off+1:off+n(k))*gam(k-1,i);
            off = off + n(k);
        end
    end
    nabla = zk - pk;
    delta = delta + (nabla - U*(U'*nabla));
    nabla = U*(U'*nabla);
    nablanorm = norm(nabla);
end
p = pk;
end
\end{lstlisting}

\begin{lstlisting}
function [ v ] = flatten( A )
%FLATTEN Vectorizes a given set of factor matrices A.

v = [];
for r = 1 : size(A{1},2)
    for k = 1 : length(A);
        v = [v; A{k}(:,r)];
    end
end
end
\end{lstlisting}

\end{document}